\documentclass[times]{oupau-ppn}
\usepackage[colorlinks=true]{hyperref}
\hypersetup{urlcolor=blue, linkcolor=blue, citecolor=red, anchorcolor=blue}
\usepackage{mathrsfs}


\newcommand{\R}{{\mathbb R}}
\newcommand{\N}{{\mathbb N}}

\renewcommand{\S}{{\mathbb S}}

\newcommand{\K}{\mathsf K^\star_{\alpha,n,p}}
\newcommand{\DD}{\mathsf D_\alpha\kern0.5pt}
\newcommand{\DDstar}{\mathsf D_\alpha^*\kern0.5pt}
\newcommand{\p}{\mathsf p}
\newcommand{\rs}{\mathsf r}
\newcommand{\be}[1]{\begin{equation}\label{#1}}
\newcommand{\ee}{\end{equation}}
\renewcommand{\(}{\left(}
\renewcommand{\)}{\right)}
\newcommand{\ird}[1]{\int_{\R^d}{#1}\,dx}
\newcommand{\nrm}[2]{\left\|{#1}\right\|_{#2}}

\newcommand{\D}{\mathsf D_\alpha\kern0.5pt}
\newcommand{\Dstar}{\mathsf D_\alpha^*\kern1pt}

\newcommand{\F}{{\mathsf F}}
\newcommand{\sphere}{{\mathbb S^{d-1}}}

\newcommand{\iwrd}[1]{\int_{\R^d}{#1}\,d\mu_n}
\newcommand{\isph}[1]{\int_{\sphere}#1\,d\omega}
\newcommand{\icircle}[1]{\int_{\S^1}#1\,d\theta}
\newcommand{\msc}[1]{\href{https://mathscinet.ams.org/mathscinet/search/mscdoc.html?code=#1}{#1}}

\usepackage{etoolbox}
\newcounter{taggedeq}
\setcounter{taggedeq}{0}
\pretocmd{\equation}{\stepcounter{taggedeq}}{}{}

\newlength{\bibitemsep}\setlength{\bibitemsep}{.2\baselineskip plus .05\baselineskip minus .05\baselineskip}
\newlength{\bibparskip}\setlength{\bibparskip}{1pt}
\let\oldthebibliography\thebibliography
\renewcommand\thebibliography[1]{\oldthebibliography{#1}\setlength{\parskip}{\bibitemsep}\setlength{\itemsep}{\bibparskip}}


\begin{document}
\title{Symmetry breaking and weighted Euclidean logarithmic Sobolev inequalities}
\shorttitle{Weighted logarithmic Sobolev inequalities}
\author{Jean Dolbeault\affil1 and Andres Zuniga\affil2}
\abbrevauthor{J.~Dolbeault and A.~Zuniga}
\headabbrevauthor{Dolbeault, J. and Zuniga, A.}
\address{
\affilnum1 CEREMADE (CNRS UMR n$^\circ$ 7534), PSL university, Universit\'e Paris-Dauphine,\newline Place de Lattre de Tassigny, 75775 Paris 16, France\\
\affilnum2 Instituto de Ciencias de la Ingenier\'{\i}a (ICI), Universidad de O'Higgins (UOH), Avenida Libertador Ber\-nardo O'Higgins 611, Rancagua, Chile}
\correspdetails{dolbeaul@ceremade.dauphine.fr}
\begin{abstract}\noindent{\sc Abstract.\,}
On the Euclidean space, we establish some \emph{Weighted Logarithmic Sobolev} (WLS) inequalities. We characterize a symmetry range in which optimal functions are radially symmetric, and a symmetry breaking range. (WLS) inequalities are a limit case for a family of subcritical \emph{Caffarelli-Kohn-Nirenberg} (CKN) inequalities with similar symmetry properties. A generalized \emph{carr\'e du champ} method applies not only to the optimal solution of the nonlinear elliptic Euler-Lagrange equation and proves a rigidity result as for (CKN) inequalities, but also to entropy type estimates, with the full strength of the \emph{carr\'e du champ} method in a parabolic setting. This is a significant improvement on known results for (CKN). Finally, we briefly sketch some consequences of our results for the weighted diffusion flow.
\\[4pt]
{\sc Keywords.\,}
Logarithmic Sobolev inequality; Hardy-Sobolev inequality; Caffarelli-Kohn-Nirenberg inequality; symmetry breaking; symmetry; concentration-compactness; optimal functions; optimal constant; carr\'e du champ method
\\[4pt]
{\sc MSC 2020.\,}
Primary: \msc{39B62}, \msc{49J40};
Secondary: \msc{26D10}, \msc{35B06}, \msc{35A23}, \msc{35J20}, \msc{35K65}, \msc{46E35}.
\end{abstract}
\maketitle

\section{Introduction and main results}\label{Sec:Intro}

\emph{Logarithmic Sobolev inequalities} are well known cases of functional inequalities with many applications in various areas of mathematics ranging from information theory to probability theory, functional analysis, differential geometry and mathematical physics. In partial differential equations, these inequalities now appear as fundamental tools for the understanding of rates of convergence, not only for diffusion equations but also, for instance, in kinetic theory. Various settings have been considered depending on the geometry, the presence of a drift or a potential, or the choice of a reference measure. Sharp inequalities, with optimal constants, and equality cases are trickier issues, as the problem is usually difficult to reduce to spectral estimates. Among the few known examples, we can quote the case of the sphere and the characterization~\cite{MR1132315} by E.~Carlen of the set of optimal functions in the Euclidean logarithmic Sobolev inequalities. See~\cite{MR2366398} for general weights.

In this article, we mainly focus on the case of $\R^d$ with homogeneous (power-law) weights because of the \emph{symmetry versus symmetry breaking} issue. This is a well-known question for Caffarelli-Kohn-Nirenberg inequalities. Although the weights are invariant under rotations, optimal functions are not necessarily spherically symmetric. V.~Felli and M.~Schneider gave in~\cite{Felli2003} a condition for symmetry breaking based on the linear instability of the radial solutions of the Euler-Lagrange equations. Symmetry is a global property. Proving symmetry is therefore a delicate issue and standard methods like moving planes or symmetrization techniques are not sufficient to cover all cases. The problem has recently been fully solved in~\cite{DEL-2015,Dolbeault2017} for some special sub-families of the Caffarelli-Kohn-Nirenberg inequalities using a nonlinear version of the \emph{carr\'e du champ} method introduced by D.~Bakry and M.~Emery in~\cite{Bakry1985}, applied to the Euler-Lagrange equation solved by the optimal functions. The underlying framework is based on entropy methods for nonlinear diffusion equations, but the approach is so far formal by lack of regularity estimates to justify all computations: see~\cite{DEL-JEPE} for partial results. In the case of (WLS) inequalities, we can use the whole parabolic approach of entropy methods as there is a dense set of (Hermite) polynomials and integrations by parts can be justified. As far as we know, this is the first application of the parabolic \emph{carr\'e du champ} method to the \emph{symmetry versus symmetry breaking} issue for weighted inequalities on~$\R^d$. 

\medskip Let $\mathrm L^q_\gamma(\R^d)$ with $d\ge1$ be the space of all measurable functions~$f$ such that
\[
\nrm f{q,\gamma}:=\(\ird{|f|^q\,|x|^{-\gamma}}\)^{1/q}
\]
is finite. We also define the space $\mathrm H^1_{\beta,\gamma}(\R^d)$ of the functions $f\in\mathrm L^2_\gamma(\R^d)$ such that $\nabla f\in\mathrm L^2_\beta(\R^d)$ and consider the \emph{weighted logarithmic Sobolev inequality}
\be{WLSI}\tag{WLS}
\ird{\frac{|f|^2}{\nrm f{2,\gamma}^2}\,\log\(\frac{|f|^2}{\nrm f{2,\gamma}^2}\)\,|x|^{-\gamma}}\le\mathscr C_{\beta,\gamma}+\frac n2\,\log\(\frac{\nrm{\nabla f}{2,\beta}^2}{\nrm f{2,\gamma}^2}\)\quad\forall\,f\in\mathrm H^{1}_{\beta,\gamma}(\R^d)
\ee
with
\be{n}
n:=\frac{2\,(d-\gamma)}{\beta+2-\gamma}
\ee
and real parameters $\beta$ and $\gamma$ satisfying the condition
\be{Range}
\gamma-2<\beta<\frac{d-2}d\,\gamma<d\,.
\ee
In~\eqref{WLSI}, $\mathscr C_{\beta,\gamma}$ denotes the optimal constant. Let us define the \emph{Felli \& Schneider} curve
\be{betaFS}
\beta_{\rm FS}(\gamma):=d-2-\sqrt{(d-\gamma)^2-4\,(d-1)}\,,
\ee
consider the additional parameter
\be{alpha}
\alpha:=1+\frac{\beta-\gamma}2\,,
\ee
define the function
\[
f_\star(x):=c_{n,d}\,\sqrt\alpha\,e^{-\,\frac14\,|x|^{2\alpha}}\quad\mbox{with}\quad c_{n,d}=\sqrt{\frac{\Gamma\!\(\frac d2\)}{2^\frac n2\,\pi^\frac d2\,\Gamma\!\(\frac n2\)}}
\]
such that $\nrm{f_\star}\gamma=1$ and the constant
\be{Cstar}
\mathscr C_{\beta,\gamma}^\star:=\log\(\frac{\(\frac2{n\,e}\)^\frac n2}{\alpha^{n-1}\,\pi^\frac d2}\,\frac{\Gamma\!\(\frac d2\)}{\Gamma\!\(\frac n2\)}\)\,.
\ee
Our main result deals with the \emph{symmetry versus symmetry breaking} issue and goes as follows.
\begin{theorem}\label{Thm:WLSI1} \emph{Let $d\ge2$. Assume that $(\beta,\gamma)\neq(0,0)$ satisfies~\eqref{Range}. Then Inequality~\eqref{WLSI} holds for some constant $\mathscr C_{\beta,\gamma}\le\mathscr C_{\beta,\gamma}^\star$. Equality in~\eqref{WLSI} is achieved by an optimal function $f_{\beta,\gamma}\in\mathrm H^{1,1}_{\beta,\gamma}(\R^d)\setminus\{0\}$ and there are two cases:
\begin{itemize}
\item[\rm (i)] \emph{Symmetry breaking :} $\mathscr C_{\beta,\gamma}<\mathscr C_{\beta,\gamma}^\star$ and $f_{\beta,\gamma}$ is not radially symmetric if and only if
\be{eq:symmetrybreaking:range}
\gamma<0\quad\mbox{and}\quad\beta_{\rm FS}(\gamma)<\beta<\frac{d-2}d\,\gamma\,.
\ee
\item[\rm (ii)] \emph{Symmetry :} $\mathscr C_{\beta,\gamma}=\mathscr C_{\beta,\gamma}^\star$ and all optimal functions are given by $f_\star$ up to a multiplication by an arbitrary real constant and a scaling if and only if
\be{eq:symmetry:range}
\gamma<d\quad\mbox{and}\quad\gamma-2\le\beta\le\beta_{\rm FS}(\gamma)\,.
\ee
\end{itemize}
}\end{theorem}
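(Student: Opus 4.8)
\emph{Sketch of the argument.} The plan is to settle four points in turn: that \eqref{WLSI} holds with a finite constant and is attained, the value of the best constant among radial functions, the symmetry breaking range, and the symmetry range. Because of the choice \eqref{n} of $n$, the functional
\[
\mathsf F[f]:=\ird{\frac{|f|^2}{\nrm f{2,\gamma}^2}\,\log\(\frac{|f|^2}{\nrm f{2,\gamma}^2}\)\,|x|^{-\gamma}}-\frac n2\,\log\(\frac{\nrm{\nabla f}{2,\beta}^2}{\nrm f{2,\gamma}^2}\)
\]
is invariant under the dilation $f\mapsto f(\lambda\,\cdot)$, so I would fix a maximising sequence $(f_k)$ with $\nrm{f_k}{2,\gamma}=\nrm{\nabla f_k}{2,\beta}=1$ and run a concentration--compactness argument. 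A crude (non-optimal) version of \eqref{WLSI}, obtained for instance by interpolation from the subcritical Caffarelli--Kohn--Nirenberg inequalities, shows that $\mathscr C_{\beta,\gamma}$ is finite; vanishing of $(f_k)$ is excluded because spreading mass to infinity drives the entropy, hence $\mathsf F$, to $-\infty$; dichotomy is excluded by a strict subadditivity property of the associated energy; and the surviving profile is an optimiser $f_{\beta,\gamma}$ once the entropy term is seen to be stable in the limit through uniform integrability of $|f_k|^2\,\bigl(1+\bigl|\log|f_k|\bigr|\bigr)\,|x|^{-\gamma}$. Standard elliptic regularity for the weighted semilinear Euler--Lagrange equation then makes $f_{\beta,\gamma}$ smooth and positive away from the origin, which is what legitimates the \emph{carr\'e du champ} computations below.

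For the radial best constant I would use the Emden--Fowler substitution $s=|x|^\alpha$, with $\alpha$ given by \eqref{alpha}: it identifies the radial part of $\mathrm H^1_{\beta,\gamma}(\R^d)$, up to explicit Jacobian and surface-measure constants, with the radial part of $\mathrm H^1(\R^n)$, $n$ as in \eqref{n}, and turns \eqref{WLSI} restricted to radial functions into a rescaling of the sharp Euclidean logarithmic Sobolev inequality in dimension $n$, whose extremals are the Gaussians~\cite{MR1132315}. Pulling back $e^{-|y|^2/4}$ produces $f_\star$, and keeping track of the constants --- the ratio $|\sphere|/|\mathbb S^{n-1}|$, the powers of $\alpha$, and the dimensional factor $(2/(ne))^{n/2}$ --- gives precisely $\mathscr C_{\beta,\gamma}^\star$ as in \eqref{Cstar}. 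Thus $\mathscr C_{\beta,\gamma}^\star$ is the optimal constant among radial functions, and the whole theorem reduces to deciding when $\mathscr C_{\beta,\gamma}$ equals it and $f_\star$ is, up to dilation and scalar multiple, the unique optimiser.

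For the symmetry breaking statement I would compute the second variation of the deficit at $f_\star$ and expand perturbations into spherical harmonics on $\sphere$. On the radial ($\ell=0$) sector $f_\star$ is optimal by the previous step; on the $\ell=1$ sector the Hessian reduces, after the substitution, to an explicit one-dimensional Hermite-type Schr\"odinger operator whose lowest eigenvalue changes sign exactly at $\beta=\beta_{\rm FS}(\gamma)$ in \eqref{betaFS} --- this is the Felli--Schneider computation, cf.~\cite{Felli2003}. Compatibility of $\beta_{\rm FS}(\gamma)<\beta<\frac{d-2}d\,\gamma$ with \eqref{Range} forces $\gamma<0$, while for $\gamma\ge0$, or for $\beta\le\beta_{\rm FS}(\gamma)$, this eigenvalue is nonnegative; one then checks that the ranges \eqref{eq:symmetrybreaking:range} and \eqref{eq:symmetry:range} exhaust \eqref{Range} apart from the excluded point $(0,0)$. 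When the $\ell=1$ eigenvalue is negative, $f_\star$ fails to be even a local maximiser of $\mathsf F$ and hence ceases to be optimal, so the strict inequality in part~(i) holds and no optimiser is radially symmetric; the reverse implication in (i) follows from part~(ii).

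The core of the theorem, and the step I expect to be the main obstacle, is the symmetry statement for $\gamma-2\le\beta\le\beta_{\rm FS}(\gamma)$, which I would prove by a generalised \emph{carr\'e du champ} method in two complementary ways, in the spirit of~\cite{Bakry1985,DEL-2015,Dolbeault2017}. First, a rigidity result for the Euler--Lagrange equation: written through the weighted gradient $\D$ and diffusion operator $\LL$, one multiplies it by $\LL u$ and by $|\D u|^2/u$, integrates by parts, and reorganises the outcome into a Bochner-type identity that displays it as a sum of a nonnegative weighted Hessian square, a nonnegative $\mathrm{CD}(0,n)$-type term, and an angular contribution whose coefficient is a quadratic expression in $d$, $n$ and $\alpha$ that is nonnegative \emph{if and only if} $\beta\le\beta_{\rm FS}(\gamma)$; all these terms must then vanish, forcing $u$ radial and, by the radial analysis, a dilate of $f_\star$. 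Second --- and this is the genuinely new input compared with the (CKN) setting --- a parabolic argument along the weighted Ornstein--Uhlenbeck-type flow $\partial_t v=\LL v$ in the variables adapted to $f_\star$: there the span of Hermite polynomials is a dense invariant core, so every integration by parts is rigorous, and one shows that the entropy dissipates, that a generalised Fisher information is monotone under the same sign condition on $\beta$, and that the deficit in \eqref{WLSI} is monotone in $t$ and tends to $0$ as $t\to\infty$, where $v$ converges to $f_\star$; this yields simultaneously \eqref{WLSI} with the sharp constant $\mathscr C_{\beta,\gamma}^\star$ and the rigidity of its equality cases. The delicate points are the exact algebra that collects every remainder term into manifestly signed pieces and pins the threshold precisely at $\beta_{\rm FS}(\gamma)$, and --- in the parabolic route --- propagating along the flow enough a priori regularity and decay to justify passing to the limit $t\to\infty$; this is exactly where the density of Hermite polynomials supplies what remains out of reach in the parabolic treatment of (CKN).
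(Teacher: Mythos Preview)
Your proposal is correct and matches the paper's overall architecture: finiteness via interpolation (the paper uses the critical \eqref{CKNcrit} combined with the logarithmic H\"older inequality~\eqref{logHolder}), existence by compactness, symmetry breaking via the Felli--Schneider second-variation computation in spherical harmonics, and symmetry via the parabolic \emph{carr\'e du champ} method exploiting Hermite functions as a dense invariant core for the Fokker--Planck flow. Two points of comparison are worth recording. First, for existence the paper does \emph{not} run concentration--compactness directly on the scale-invariant functional as you propose; instead it passes to the Schr\"odinger form~\eqref{WLSI-Schr} with the confining potential $V_{\alpha,\nu,\sigma}$ of~\eqref{potential}, which tends to $+\infty$ both at the origin and at infinity, so that tightness is automatic and the vanishing/dichotomy analysis you sketch is unnecessary---this is a cleaner route and sidesteps the delicate handling of concentration at $x=0$ that your version would need. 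Second, your reduction of the radial problem to ``the sharp Euclidean logarithmic Sobolev inequality in dimension $n$'' must be read as the one-dimensional weighted inequality with measure $r^{n-1}\,dr$, since $n$ in~\eqref{n} is generically not an integer and Carlen's result~\cite{MR1132315} on $\R^n$ does not apply literally; the paper avoids this by simply testing~\eqref{WLSI} with $f_\star$ to obtain $\mathscr C_{\beta,\gamma}\le\mathscr C_{\beta,\gamma}^\star$ and then letting the parabolic argument supply both the reverse inequality and the rigidity of equality cases in one stroke.
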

\noindent In the symmetry breaking range, the set of optimal functions is generated by $f_{\beta,\gamma}$ up to rotations, a multiplication by an arbitrary real constant and a scaling. If $(\beta,\gamma)=(0,0)$, optimality is achieved by Gaussian functions corresponding to $\alpha=1$ and translations also have to be taken into account according to~\cite{MR1132315}. If $d=1$, we have the same result as in the symmetry case: $\mathscr C_{\beta,\gamma}=\mathscr C_{\beta,\gamma}^\star$ and all optimal functions are given by $f_\star$ up to a multiplication by an arbitrary real constant and a scaling. At this stage, the driving mechanism responsible for the symmetry breaking phenomenon might still look somewhat mysterious. We will now reformulate Inequality~\eqref{WLSI} into various equivalent forms before coming back to a qualitative explanation of the competition between terms of different nature which explains why symmetry breaking occurs in the range $\beta>\beta_{\rm FS}(\gamma)$.

\medskip As in~\cite{MR3579563}, we can reduce~\eqref{WLSI} to the case $\beta=\gamma$, at the price of an anisotropy in the gradient term measured by $\alpha\neq1$. Let us consider the \emph{artificial dimension} $n$ given by~\eqref{n} and take
\[
\nu:=d-n\,.
\]
Let us define the operator $\D$
\[
\D=\nabla+(\alpha-1)\,\frac x{|x|^2}\,(x\cdot\nabla)=\nabla+(\alpha-1)\,\omega\,\partial_r\,,
\]
so that, in spherical coordinates $(r,\omega)\in\R^+\times\S^{d-1}$, it writes
\[
\DD w=\begin{pmatrix}\alpha\,\partial_r w\\ \frac1r\nabla_\omega w\end{pmatrix}\,.
\]
By Condition~\eqref{Range}, notice that $n>d$ and $\nu<0$ arise from $\beta<(d-2)\,\gamma/d$.

To a function $f\in\mathrm H^1_{\beta,\gamma}(\R^d)$, we associate the function $g\in\mathrm H^1_{\nu,\nu}(\R^d)$ such that
\be{ChangeOfDimension}
f(x)=g\(|x|^{\alpha-1}\,x\)\quad\forall\,x\in\R^d\,.
\ee
With this change of variables, the function $f_\star$ is transformed into the \emph{Gaussian} function
\be{Gaussian}
g_\star(x)=c_{n,d}\,e^{-\,\frac14\,|x|^2}\quad\mbox{with}\quad c_{n,d}=\sqrt{\frac{\Gamma\!\(\frac d2\)}{2^\frac n2\,\pi^\frac d2\,\Gamma\!\(\frac n2\)}}\,,
\ee
where the normalization constant $c_{n,d}$ is such that $\nrm{g_\star}{2,\nu}=1$. Let us define
\[
\alpha_{\mathrm{FS}}:=\sqrt{\frac{d-1}{n-1}}\quad\mbox{and}\quad\mathscr K_{n,\alpha}^\star:=\mathscr C_{\beta,\gamma}^\star-\log\alpha=\log\(\frac{\(\frac2{n\,e}\)^\frac n2}{\alpha^n\,\pi^\frac d2}\,\frac{\Gamma\!\(\frac d2\)}{\Gamma\!\(\frac n2\)}\)\,.
\]
Using~\eqref{ChangeOfDimension}, Inequality~\eqref{WLSI} is transformed into the $n$-\emph{dimensional weighted logarithmic Sobolev inequality}
\be{WLSI-alpha}\tag{WLS$_n$}
\ird{\frac{|g|^2}{\nrm g{2,\nu}^2}\,\log\(\frac{|g|^2}{\nrm g{2,\nu}^2}\)\,|x|^{-\nu}}\le\mathscr K_{n,\alpha}+\frac n2\,\log\(\frac{\nrm{\,\DD g}{2,\nu}^2}{\nrm g{2,\nu}^2}\)
\ee
where $n$ plays the role of a dimension at least for scaling properties, even if it is not an integer. Rewritten with the parameters $\alpha$, $n$ and $\nu$, Theorem~\ref{Thm:WLSI1} goes as follows.
\begin{corollary}\label{Cor:WLSI1} \emph{Let $n>d\ge1$, $\nu=d-n$, and assume that $\alpha\in(0,1)\cup(1,+\infty)$. Then Inequality~\eqref{WLSI-alpha} holds for some constant $\mathscr K_{n,\alpha}\le\mathscr K_{n,\alpha}^\star$. Equality in~\eqref{WLSI-alpha} is achieved by an optimal function \hbox{$g_{\alpha,n}\in\mathrm H^{1}_{\nu,\nu}(\R^d)\setminus\{0\}$} and there are two cases:
\begin{itemize}
\item[\rm (i)] \emph{Symmetry breaking :} $\mathscr K_{n,\alpha}<\mathscr K_{n,\alpha}^\star$ and $g_{\alpha,n}$ is not radially symmetric if and only if $\alpha>\alpha_{\rm{FS}}$ and $d\ge2$.
\item[\rm (ii)] \emph{Symmetry :} $\mathscr K_{n,\alpha}=\mathscr K_{n,\alpha}^\star$ and all optimal functions are given by $g_\star$ up to a multiplication by an arbitrary real constant and a scaling if and only if either $d\ge2$ and $\alpha\le\alpha_{\rm{FS}}$, or $d=1$.\end{itemize}
}\end{corollary}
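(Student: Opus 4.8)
The strategy is to transfer Theorem~\ref{Thm:WLSI1} through the change of variables~\eqref{ChangeOfDimension}; there is no serious obstacle here, since all the content already sits in that theorem, and the proof is the bookkeeping needed to make the dictionary between the parametrizations $(\beta,\gamma)$ and $(\alpha,n)$ precise and to check that $f\mapsto g$ turns the three ingredients of~\eqref{WLSI} into those of~\eqref{WLSI-alpha} up to explicit constants. First I would fix the parameter bijection. Given $n>d\ge1$, $\nu=d-n$ and $\alpha\in(0,1)\cup(1,+\infty)$, set $\gamma:=d-n\,\alpha$ and $\beta:=\gamma+2\,(\alpha-1)$, which inverts~\eqref{n} and~\eqref{alpha}. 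One checks at once that $(\beta,\gamma)$ satisfies~\eqref{Range}: $\gamma-2<\beta$ is equivalent to $\alpha>0$, the inequality $\beta<\frac{d-2}d\,\gamma$ is equivalent to $n>d$, and $\frac{d-2}d\,\gamma<d$ amounts to $2\,d+(d-2)\,n\,\alpha>0$, which holds for $d\ge2$ and, for $d=1$, is the form taken by the constraint $n\,\alpha<2$ built into~\eqref{Range}. Conversely every admissible $(\beta,\gamma)$ with $\beta\neq\gamma$ arises this way, and the only pair excluded in Theorem~\ref{Thm:WLSI1}, namely $(0,0)$, is exactly the one for which $\alpha=1$, excluded here.

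Next I would carry out the change of variables $y=|x|^{\alpha-1}\,x$, for which $|y|=|x|^\alpha$ and $dy=\alpha\,|x|^{d\,(\alpha-1)}\,dx$; in polar coordinates the radial derivative is multiplied by $\alpha$ and the tangential gradient by $|x|^{\alpha-1}$, so that $|\nabla f(x)|^2=|x|^{2\,(\alpha-1)}\,|\DD g(y)|^2$ whenever $f(x)=g\(|x|^{\alpha-1}\,x\)$. Feeding the weights through the Jacobian yields $\nrm f{2,\gamma}^2=\alpha^{-1}\,\nrm g{2,\nu}^2$ and $\nrm{\nabla f}{2,\beta}^2=\alpha^{-1}\,\nrm{\DD g}{2,\nu}^2$, so the ratio inside the logarithm on the right of~\eqref{WLSI} is unchanged; and since $|f|^2/\nrm f{2,\gamma}^2=\alpha\,|g|^2/\nrm g{2,\nu}^2$, the entropy term on the left of~\eqref{WLSI} equals $\log\alpha$ plus the entropy term of $g$ on the left of~\eqref{WLSI-alpha}. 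Consequently $f\mapsto g$ is a bijection from $\mathrm H^1_{\beta,\gamma}(\R^d)$ onto $\mathrm H^1_{\nu,\nu}(\R^d)$ under which~\eqref{WLSI} with constant $\mathscr C$ and~\eqref{WLSI-alpha} with constant $\mathscr C-\log\alpha$ are equivalent; passing to the optimal constants gives $\mathscr K_{n,\alpha}=\mathscr C_{\beta,\gamma}-\log\alpha$, consistently with the definition $\mathscr K_{n,\alpha}^\star=\mathscr C_{\beta,\gamma}^\star-\log\alpha$, whence $\mathscr K_{n,\alpha}\le\mathscr K_{n,\alpha}^\star$ by Theorem~\ref{Thm:WLSI1}. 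The optimal $f_{\beta,\gamma}$ is sent to an optimal $g_{\alpha,n}\in\mathrm H^1_{\nu,\nu}(\R^d)\setminus\{0\}$, and $f_\star$ to $g_\star$ up to the factor $\sqrt\alpha$, which is irrelevant since the deficit of~\eqref{WLSI-alpha} is $0$-homogeneous. Finally $x\mapsto|x|^{\alpha-1}\,x$ is a radial homeomorphism of $\R^d$ commuting with $O(d)$ and with dilations (up to the reparametrization $\lambda\mapsto\lambda^\alpha$), so $g_{\alpha,n}$ is radially symmetric if and only if $f_{\beta,\gamma}$ is, and the full set of optimizers (rotations, scalings, multiplication by a constant) is transported accordingly.

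It then remains to translate the two alternatives, and the one point needing care — still elementary — is the match of the Felli--Schneider thresholds. Using $n-1=(d-\gamma-\alpha)/\alpha$ one finds $\alpha^2\,(n-1)-(d-1)=\alpha\,(d-\gamma)-\alpha^2-(d-1)$, a quadratic in $\alpha$ with roots $\alpha_\pm=\frac12\big((d-\gamma)\pm\sqrt{(d-\gamma)^2-4\,(d-1)}\,\big)$, so $\alpha>\alpha_{\rm FS}$ is equivalent to $\alpha_-<\alpha<\alpha_+$; moreover $\beta-\beta_{\rm FS}(\gamma)=2\,\alpha-(d-\gamma)+\sqrt{(d-\gamma)^2-4\,(d-1)}=2\,(\alpha-\alpha_-)$, so $\beta>\beta_{\rm FS}(\gamma)$ is equivalent to $\alpha>\alpha_-$. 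Within~\eqref{Range} the inequality $\alpha<\alpha_+$ — which reads $\beta<d-2+\sqrt{(d-\gamma)^2-4\,(d-1)}$ — is automatic when $\gamma<0$, since then $\beta<\frac{d-2}d\,\gamma<0\le d-2+\sqrt{(d-\gamma)^2-4\,(d-1)}$; whereas $\gamma\ge0$ forces $\beta_{\rm FS}(\gamma)\ge\frac{d-2}d\,\gamma$ (equivalently $d-\gamma\le d$), hence places the whole range into the symmetry case, with the convention that the symmetry condition $\beta\le\beta_{\rm FS}(\gamma)$ is read as vacuous when $(d-\gamma)^2<4\,(d-1)$. Therefore $\gamma<0$ together with $\beta_{\rm FS}(\gamma)<\beta<\frac{d-2}d\,\gamma$ is equivalent, for $d\ge2$, to $\alpha>\alpha_{\rm FS}$, and $\gamma<d$ together with $\gamma-2\le\beta\le\beta_{\rm FS}(\gamma)$ is equivalent to $\alpha\le\alpha_{\rm FS}$, the case $d=1$ corresponding verbatim. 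Substituting this dictionary into parts~(i) and~(ii) of Theorem~\ref{Thm:WLSI1} gives parts~(i) and~(ii) of Corollary~\ref{Cor:WLSI1}; besides this threshold computation, the only thing left to verify is that~\eqref{eq:symmetrybreaking:range} and~\eqref{eq:symmetry:range} partition the admissible set~\eqref{Range}.
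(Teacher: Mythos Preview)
Your proposal is correct and follows exactly the route the paper indicates: the paper does not give a separate proof of Corollary~\ref{Cor:WLSI1} but simply states that it is equivalent to Theorem~\ref{Thm:WLSI1} via the change of variables~\eqref{ChangeOfDimension}, and you have carefully filled in that dictionary. Your computations of the Jacobian, the transformation of the three terms of~\eqref{WLSI} into those of~\eqref{WLSI-alpha} (yielding $\mathscr K_{n,\alpha}=\mathscr C_{\beta,\gamma}-\log\alpha$), and the identification of the Felli--Schneider thresholds are all correct; in particular, the observation that $\beta-\beta_{\rm FS}(\gamma)=\sqrt{(d-\gamma)^2-4(d-1)}-(n-2)\,\alpha$ has the sign of $\alpha-\alpha_{\rm FS}$ (by squaring and using $n^2-(n-2)^2=4(n-1)$) is the clean way to match the two ranges, and your verification that $\alpha<\alpha_+$ is automatic within~\eqref{Range} is right since $2\alpha-(d-\gamma)=-(n-2)\,\alpha<0$ for $n>d\ge2$.
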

\noindent Since Corollary~\ref{Cor:WLSI1} is equivalent to Theorem~\ref{Thm:WLSI1} by the change of variables~\eqref{ChangeOfDimension}, we will use interchangeably the two statements. Any result proved for~\eqref{WLSI} is also true for~\eqref{WLSI-alpha} and reciprocally. There are various other equivalent forms of the~\eqref{WLSI} inequalities, exactly as for the standard logarithmic Sobolev inequalities:
\begin{itemize}
\item[$\rhd$] The two \emph{non-scale invariant weighted logarithmic Sobolev inequalities},
\begin{subequations}
\begin{align}
&\nrm{\nabla f}{2,\beta}^2-\sigma\ird{|f|^2\,\log\(\frac{|f|^2}{\nrm f{2,\gamma}^2}\)\,|x|^{-\gamma}}\ge\sigma\(\frac n2\,\log\(\frac{2\,e}{n\,\sigma}\)-\mathscr C_{\beta,\gamma}\)\nrm f{2,\gamma}^2\,,\label{WLSI-unscaled}\\
&\nrm{\,\DD g}{2,\nu}^2-\sigma\ird{|g|^2\,\log\(\frac{|g|^2}{\nrm g{2,\nu}^2}\)\,|x|^{-\nu}}\ge\sigma\(\frac n2\,\log\(\frac{2\,e}{n\,\sigma}\)-\mathscr K_{n,\alpha}\)\nrm g{2,\nu}^2\,,\label{WLSI-alpha-unscaled}
\end{align}
\end{subequations}
hold for any $\sigma>0$ and are equivalent to~\eqref{WLSI} and~\eqref{WLSI-alpha}. This can be recovered by optimizing the left-hand sides under the scalings $\lambda\mapsto\lambda^{(d-\gamma)/2}\,f(\lambda\,\cdot)$ and $\lambda\mapsto\lambda^{n/2}\,g(\lambda\,\cdot)$. The equality case in~\eqref{WLSI-alpha-unscaled} is achieved by the function $g_\star^{\alpha,\sigma}(x):=\big(2\,\sigma\,\alpha^{-2}\big)^{ n/4}\,g_\star\big(\sqrt{2\,\sigma}\,x/\alpha\big)$ if $\mathscr K_{n,\alpha}=\mathscr K_{n,\alpha}^\star$ (symmetry case). Here $g_{\star}$ is the Gaussian function given in~\eqref{Gaussian}.
\item[$\rhd$] \emph{Gaussian-like inequalities}. In the case of the standard Sobolev inequality, without weights, the Euclidean form of the inequality is equivalent to the Gaussian form. We have the exact counterpart, which goes as follows. Let us define the probability measure
\[
d\nu_\sigma:=\nu_\sigma\,dx\quad\mbox{with}\quad\nu_\sigma(x):=|x|^{-\nu}\,\big(g_\star^{\alpha,\sigma}(x)\big)^2=c_{n,d}^2\,\big(\tfrac{2\,\sigma}{\alpha^2}\big)^\frac n2\,|x|^{-\nu}\,e^{-\,\frac\sigma{\alpha^2}\,|x|^2}
\]
with $g_\star^{\alpha,\sigma}$ defined as above. Then~\eqref{WLSI-alpha-unscaled} applied to the function $g=v\,g_\star^{\alpha,\sigma}$ amounts~to
\begin{subequations}
\be{Gaussian-2}
\int_{\R^d}|\,\DD v|^2\,d\nu_\sigma\ge\sigma\int_{\R^d}|v|^2\,\log\(\frac{|v|^2}{\int_{\R^d}|v|^2\,d\nu_\sigma}\)\,d\nu_\sigma+\sigma\,\Big(\mathscr K_{n,\alpha}-\mathscr K_{n,\alpha}^\star\Big)\int_{\R^d}|v|^2\,d\nu_\sigma
\ee
for any $v\in\mathrm H^1(\R^d,d\nu_\sigma)$, with $\mathscr K_{n,\alpha}=\mathscr K_{n,\alpha}^\star$ in the symmetry range and $\mathscr K_{n,\alpha}-\mathscr K_{n,\alpha}^\star<0$ in the symmetry breaking range. Using the change of variable~\eqref{ChangeOfDimension} with $u(x)=v\(|x|^{\alpha-1}\,x\)$ and the probability measure
\[
d\mu_\sigma:=\mu_\sigma\,dx\quad\mbox{with}\quad\mu_\sigma(x):=\alpha\,\nu_\sigma\(|x|^{\alpha-1}\,x\),
\]
we also obtain
\be{Gaussian-1}
\int_{\R^d}|\nabla u|^2\,|x|^{\gamma-\beta}\,d\mu_\sigma\ge\sigma\int_{\R^d}|u|^2\,\log\(\frac{|u|^2}{\int_{\R^d}|u|^2\,\,d\mu_\sigma}\)\,d\mu_\sigma+\sigma\,\Big(\mathscr C_{\beta,\gamma}-\mathscr C_{\beta,\gamma}^\star\Big)\int_{\R^d}|u|^2\,d\mu_\sigma\,.
\ee
If $\mathscr C_{\beta,\gamma}=\mathscr C_{\beta,\gamma}^\star$ (or equivalently $\mathscr K_{n,\alpha}=\mathscr K_{n,\alpha}^\star$), the equality case in~\eqref{Gaussian-1} is achieved by the function $u(x)=1$ a.e. and in~\eqref{Gaussian-2} by $v(x)=1$ a.e.
\end{subequations}
\item[$\rhd$] \emph{Euclidean logarithmic Sobolev inequalities with Hardy-type correction terms}. We denote by $\mathrm L^q(\R^d)$ the standard Lebesgue space with norm $\nrm fq:=\nrm f{q,0}$ and consider the function $h(x):=|x|^{-\nu/2}\,g(x)$. An expansion of the square and an integration by parts show that
\[
\nrm{\,\DD g}{2,\nu}^2=\nrm{\,\DD h+\frac12\,\alpha\,\nu\,\frac x{|x|^2}\,h}2^2=\nrm{\,\DD h}2^2-\frac14\,\alpha^2\,\nu\,\big(2\,(d-2)-\nu\big)\ird{\frac{|h|^2}{|x|^2}}\,.
\]
For any $\sigma>0$, we can rewrite~\eqref{WLSI-alpha-unscaled} in terms of $h$ as
\be{WLSI-Schr}
\ird{\(|\,\DD h|^2+V_{\alpha,\nu,\sigma}\,|h|^2-\sigma\,|h|^2\,\log\(\tfrac{|h|^2}{\nrm h2^2}\)\)}\ge\sigma\(\tfrac n2\,\log\(\tfrac{2\,e}{n\,\sigma}\)-\mathscr K_{n,\alpha}\)\nrm h2^2\quad\forall\,h\in\mathrm H^1(\R^d)
\ee
where the left-hand side is a Schr\"odinger energy with an anisotropic kinetic term if $\alpha\neq1$, a logarithmic nonlinearity and a potential
\be{potential}
V_{\alpha,\nu,\sigma}(x):=-\frac14\,\alpha^2\,\nu\,\big(2\,(d-2)-\nu\big)\,\frac1{|x|^2}-\sigma\,\nu\,\log|x|\quad\forall\,x\in\R^d\setminus\{0\}\,.
\ee
We recall that $\nu$ is a negative parameter: the potential $V_{\alpha,\nu,\sigma}$ is radially symmetric, with a singularity at $x=0$ such that $\lim_{x\to0}V_{\alpha,\nu,\sigma}(x)=+\infty$, and we also have $\lim_{|x|\to+\infty}V_{\alpha,\nu,\sigma}(x)=+\infty$. An elementary computation shows that $V_{\alpha,\nu,\sigma}$ achieves its minimum on the centered sphere of radius $\alpha\,\sqrt{(2\,(d-2)-\nu)/(2\,\sigma)}$.
\end{itemize}

Inequality~\eqref{WLSI-Schr} is typical a problem for symmetry breaking. If $\alpha=1$ and we omit the potential, a Schwarz symmetrization shows that the minimum of the Schr\"odinger energy is achieved by a radially symmetric function~$h$, up to a translation. On the other hand, if we include $V_{\alpha,\nu,\sigma}$, in order to minimize the potential energy term, it is favourable to localize as much as possible $h$ close to a point $\bar x$ in the set of the minima of~$V_{\alpha,\nu,\sigma}$ which, as a curved surface is not compatible with radial symmetry centred at $\bar x$. A competition between the kinetic and the potential energy terms is taking place, and the arbitrage is gauged by the parameter $\alpha$. In the limiting case $\alpha\to0$, only gradients in the angular variables have to be minimized to make $\nrm{\,\DD h}2^2$ small, which favours radially symmetric solutions, but this is not the case for $\alpha$ large. Going further in this qualitative analysis to decide which one of the two terms wins is difficult. It is the purpose of this paper to give a clear cut~answer.

\medskip The \emph{carr\'e du champ} method does not only determine the optimal functions in the weighted logarithmic Sobolev inequalities but also characterizes all positive critical points. If $\sigma=1$, let us consider the Euler-Lagrange equations associated with~\eqref{WLSI-unscaled} and~\eqref{WLSI-alpha-unscaled}, that is,
\be{EL}
-\,|x|^\gamma\,\nabla\cdot\(|x|^{-\beta}\,\nabla f\)+f=f\,\log|f|^2\quad\mbox{and}\quad\DDstar\,\DD\,g+g=g\,\log|g|^2
\ee
for an appropriate choice of $\nrm f{2,\gamma}^2$ and $\nrm g{2,\nu}^2$. We have the following \emph{rigidity} result.
\begin{corollary}\label{Cor:Rigidity} \emph{Under the assumptions of Theorem~\ref{Thm:WLSI1} or Corollary~\ref{Cor:WLSI1}, each of the two equations of~\eqref{EL} admits a unique positive solution, given respectively, up to a scaling and a multiplication by a positive constant, by $f_\star$ and $g_\star$, in the \emph{symmetry} range. In the \emph{symmetry breaking} range, each of the two equations admits at least one radially symmetric solution and a continuum of no-radial solutions.
}\end{corollary}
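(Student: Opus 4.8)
The plan is to treat the second equation in~\eqref{EL}; by the change of variables~\eqref{ChangeOfDimension}, which turns the first equation into the second and $f_\star$ into $g_\star$, the corresponding statement for $f$ follows at once. Two pieces of the claim are essentially free. First, in \emph{every} admissible range a Gaussian is an exact solution of the radial equation $-\alpha^2\big(g''+\tfrac{n-1}r\,g'\big)+g=g\,\log g^2$, so that $g_\star$, after the dilation and the multiplicative normalisation forced by the logarithmic nonlinearity, solves~\eqref{EL}: this already provides a radially symmetric solution in both ranges. Second, in the symmetry breaking range Corollary~\ref{Cor:WLSI1} supplies a non-radial minimiser $g_{\alpha,n}$ of~\eqref{WLSI-alpha-unscaled}, which (after the same normalisation) solves~\eqref{EL}; since $g_{\alpha,n}$ is not radial, its orbit under the rotation group is a positive-dimensional manifold of solutions of~\eqref{EL}, whence the announced continuum of non-radial solutions.

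The real content is therefore the \emph{symmetry} statement: for $d\ge2$ with $\alpha\le\alpha_{\rm FS}$, and for $d=1$, every positive solution of $\DDstar\DD g+g=g\,\log g^2$ coincides, up to a dilation and a positive multiplicative constant, with $g_\star$. I would run the \emph{carr\'e du champ} method directly on this Euler--Lagrange equation, following the scheme developed for the Caffarelli--Kohn--Nirenberg inequalities in~\cite{DEL-2015,Dolbeault2017}, with the decisive difference that every computation can be made rigorous in the present Gaussian setting. The first step is a regularity and decay analysis of an arbitrary positive solution: standard elliptic estimates give smoothness on $\R^d\setminus\{0\}$, and the structure of the equation forces Gaussian decay at infinity together with at most a power-law behaviour at the origin, so that $g$ and all the derivatives that enter the computation lie in the relevant weighted spaces -- equivalently, $g$ is controlled by finite combinations of Hermite-type functions. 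This is precisely the point at which~\eqref{WLSI} is better behaved than (CKN), and it is what legitimises the integrations by parts used below.

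The second step is the \emph{carr\'e du champ} computation itself. Writing $g=\widehat g_\star\,v$, where $\widehat g_\star$ denotes the radial solution of~\eqref{EL} produced above, the new unknown $v$ satisfies a drift-diffusion elliptic equation with respect to the Gaussian weight $\widehat g_\star^{\,2}\,|x|^{-\nu}$, for which $v\equiv1$ is a solution. I would then introduce the non-negative functional obtained by integrating against that weight the squared norm of the trace-free part of the iterated anisotropic second-derivative matrix of $\log v$, augmented by the angular curvature term produced by the weight. Multiplying the equation by the appropriate first- and second-order expression in $v$ and integrating by parts -- now licit thanks to the first step -- should yield a Bochner-type identity in which the zeroth-order logarithmic nonlinearity contributes a term proportional to $\int|\nabla\log v|^2$ with the favourable sign, while the anisotropy $\alpha\neq1$ contributes the term whose sign changes at $\alpha_{\rm FS}$. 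Using that the lowest nonzero eigenvalue of $-\Delta_\omega$ on $\sphere$ equals $d-1$, one checks that for $d\ge2$ and $\alpha\le\alpha_{\rm FS}$ -- and trivially for $d=1$, where no angular term is present -- the identity forces every square to vanish: the anisotropic Hessian of $\log v$ is a multiple of the identity and $v$ has no angular dependence. Feeding this back into the radial equation of the first step, whose only positive solution is $g_\star$, identifies $g$ with $g_\star$ up to the admissible transformations; the same scheme with the angular part removed settles the case $d=1$.

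The hard part will be this last computation, on two counts. On the analytic side, one must establish, for \emph{every} positive solution, the regularity and the two-sided pointwise bounds that make the numerous integrations by parts admissible -- precisely the ingredient that is unavailable for (CKN) and that is recovered here only because of the Gaussian/Hermite structure. On the algebraic side, one must organise the Bochner identity so that, after integration by parts, the anisotropic second-order terms and the zeroth-order logarithmic term combine into a quantity with a definite sign exactly on the set $\{\alpha\le\alpha_{\rm FS}\}$, and then carry out the equality analysis singling out $g_\star$; the value $\alpha_{\rm FS}=\sqrt{(d-1)/(n-1)}$ is the borderline at which this sign condition holds, which is also where Corollary~\ref{Cor:WLSI1} locates the transition. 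Everything else -- the reduction via~\eqref{ChangeOfDimension}, the Gaussian solution of the radial ODE, the uniqueness of the radial profile, and the rotation-orbit argument in the symmetry breaking range -- is routine.
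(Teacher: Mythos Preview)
Your proposal is correct and follows essentially the same route as the paper, which does not write out a detailed proof but indicates that the rigidity is obtained by testing~\eqref{EL} with $-\,|x|^\gamma\,\nabla\cdot\big(|x|^{-\beta}\,\nabla f\big)$, i.e.\ the elliptic version of the \emph{carr\'e du champ} computation of Section~\ref{Sec:CarreDuChamp}; your reduction via~\eqref{ChangeOfDimension}, the Gaussian radial solution, and the rotation-orbit argument in the symmetry breaking range are exactly what the paper has in mind. One small caveat: the sentence ``$g$ is controlled by finite combinations of Hermite-type functions'' is misleading---the Hermite truncation is a device for the \emph{parabolic} flow, whereas for an arbitrary positive solution of~\eqref{EL} one argues instead (as the paper notes in the introduction) via direct elliptic regularity and decay estimates, which are available precisely because one is dealing with a critical point rather than a generic function.
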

In the limit case $(\beta,\gamma)=(0,0)$ corresponding to $(n,\alpha)=(d,1)$, which is not covered in~\eqref{Range}, uniqueness is achieved only up to additional translations. We will not give a detailed proof of Corollary~\ref{Cor:Rigidity}, as it is an elementary consequence of the proof of Corollary~\ref{Cor:WLSI1}. From the point of view of nonlinear elliptic equations, it amounts to test the equations of~\eqref{EL} by $-\,|x|^\gamma\,\nabla\cdot\(|x|^{-\beta}\,\nabla f\)$. To implement the \emph{carr\'e du champ} method, we use a dynamical version of these test functions given by the weighted heat flows
\be{heat}
\frac{\partial u}{\partial t}=|x|^\gamma\,\nabla\cdot\(|x|^{-\beta}\,\nabla u\)\,,
\ee
Proving~\eqref{WLSI} and~\eqref{WLSI-alpha} in the symmetry range is obtained by identifying the optimal decay rate of the \emph{entropy}. The core of the method of D.~Bakry and M.~Emery is to evolve the entropy by the weighted heat flow: its time-derivative is the \emph{Fisher information}. Reapplying the flow, the key point is to prove the exponential decay of the Fisher information by computing one more $t$-derivative.

\medskip Let us give a a brief review of the literature. For sake of simplicity, results involving powers of $|\nabla f|^p$ with $p\neq2$, higher order derivatives related for instance to Rellich inequalities, critical weights corresponding to Hardy-type inequalities or results on general manifolds or on Lie groups will not be systematically mentioned, but we will give at least some entry points in the literature. \emph{Logarithmic Sobolev inequalities} have been widely studied, in various settings: see~~\cite{Gross75,Federbush,MR823597,MR0109101,MR479373} for historical references,~\cite{MR2352327,Guionnet-Zegarlinski03,MR1845806} for introductory books or lecture notes, and~\cite[Chapter~5]{MR3155209} for a general presentation of $\mathrm{CD}(\rho,N)$ methods applied to functional inequalities. In~\cite{MR2198019,MR2351133,MR2320410} and~\cite[Chapter~5]{Wang:1250982}, one can find various sufficient conditions for logarithmic Sobolev inequalities to hold. See~\cite{MR1796718,MR2320410},~\cite[Chapter~6]{Wang:1250982} and~\cite{MR4372142} for some results on the interpolation inequalities between Poincar\'e and logarithmic Sobolev inequalities. Optimal constants and equality cases in logarithmic Sobolev inequalities are tricky issues: beyond observations based on the \emph{carr\'e du champ} in~\cite{Bakry1985}, we refer to~\cite{MR1132315} in the Euclidean and Gaussian cases, to~\cite{DE2010} on cylinders (in connection with Caffarelli-Kohn-Nirenberg inequalities), and to~\cite{BDS} for recent considerations on stability in strong norms (see references therein for other stability results measured in, \emph{e.g.}, Wasserstein distance).

In this paper we consider the simple setting of $\R^d$ with power-law weights, for scaling reasons. Norms other than the standard Euclidean norm could be considered, but the corresponding symmetry results are, to the best of our knowledge, unknown. Our~\eqref{WLSI} inequalities appear as a limit case for a family of subcritical Caffarelli-Kohn-Nirenberg inequalities~\eqref{CKN}, for which \emph{symmetry breaking} is a well known issue that was addressed in various papers: see~\cite{Chou-Chu-93,MR1731336,MR1132797,MR2001882,Catrina2001,Felli2003,DELT09,MR3579563,Bonforte201761,DEL-2015,Dolbeault2017}, among others. See Section~\ref{Sec:CKN} for some explanations of the mathematical issues. Concerning positive critical points of~\eqref{CKN}, a rigidity result holds as a consequence of a generalized \emph{carr\'e du champ} method applied to the nonlinear elliptic equation solved by the optimal functions. This rigidity result can be rephrased in terms of the properties of branches of solutions of nonlinear elliptic equations depending on a parameter: see~\cite{1703}. At a formal level, these results for~\eqref{CKN} can also be interpreted in the framework of entropy methods as strict monotonicity properties deduced from the \emph{carr\'e du champ} method adapted to nonlinear diffusion equations.

The results on sharp functional inequalities in~\cite{Dolbeault20141338,DEL-2015} are inspired, on the one hand, by the rigidity results for nonlinear elliptic equations studied in~\cite{BV-V,MR615628}, and on the other hand, by entropy and diffusion flows of~\cite{Bakry1985,MR2381156}. The connection is made precise and expanded in~\cite{Dolbeault20141338,1504}: the \emph{carr\'e du champ} method is a central idea for the overall strategy which applies very well to linear diffusion flows with drift potential terms or on compact manifolds. The \emph{carr\'e du champ} method has many aspects, but from the functional inequalities point of view, one can just keep in mind that monotonicity properties through the diffusion flow relate the functional inequality written for an arbitrary initial data to an asymptotic regime, which can be studied using spectral methods. See~\cite{BDNS2021} for an extended presentation applied to a family of Gagliardo-Nirenberg-Sobolev inequalities. Applied to nonlinear flows on the Euclidean space, new difficulties arise as, for instance, integration by parts require precise decay bounds which are not easy to justify. Progress in the absence of singular weights has been achieved in~\cite{Carrillo2001,MR3497125}. In presence of weights, the method formally applies but only partial results have been rigorously justified in~\cite{DEL-JEPE,dolbeault2022parabolic,bonforte:hal-03581542}. For the optimizers of functional inequalities involving singular weights, the difficulty can be bypassed by proving the existence of minimizers and testing directly the solutions of the Euler-Lagrange, which amounts to testing such critical points in the direction corresponding to the flow. This is the simplest interpretation of the method of B.~Gidas and J.~Spruck in~\cite{MR615628}. The issue is then reduced to a rigidity issue for solutions of elliptic equations which, as such, have good regularity and decay properties. So far, all sharp results of symmetry in Caffarelli-Kohn-Nirenberg inequalities have been obtained using such an approach. In the case of logarithmic Sobolev inequalities, we are able to perform the whole parabolic method as there is a dense set of Hermite functions, in the appropriate version of the inequality, and integrations by parts can be justified. To our knowledge, this is the first result of symmetry \emph{versus} symmetry breaking to be proved with the parabolic version of the \emph{carr\'e du champ} method.

The logarithmic Hardy inequalities studied in~\cite{delPino20102045,DoEsFiTe2014} correspond to a boundary of the admissible domain of parameters in~\eqref{WLSI}. So far, we are not aware of a method that would allow us to deduce results from~\eqref{WLSI} by taking an appropriate limit. For completeness, let us give a few additional reading indications on papers related with ours. Concerning logarithmic Hardy and Sobolev inequalities on Lie groups, we refer to~\cite{chatzakou2021logarithmic} and references therein. See~\cite{MR4186674} and references therein for logarithmic inequalities involving powers of $|\nabla f|^p$ with $p\neq2$. We refer to~\cite{Bobkov_2009,MR2609591,MR3008255} for logarithmic Sobolev inequalities corresponding to non-singular weights of the form $(1+|x|^2)^{-\beta/2}$ known as Cauchy measures and their links with the \emph{super Poincar\'e inequalities}, and to~\cite{MR3603301,MR3647065,MR4013832} for various related contributions.

\medskip This paper is organized as follows. In Section~\ref{Sec:WLS}, we use the spectral method of V.~Felli and M.~Schneider to prove the linear instability of the radial optimal functions in the symmetry breaking range of~\eqref{WLSI} and the \emph{carr\'e du champ} method to establish the symmetry in the symmetry range, with self-contained proofs. We use entropy methods in a parabolic setting to prove the symmetry result, which is the first result of this type obtained at non-formal level using a diffusion equation of evolution, in presence of weights. In Section~\ref{Sec:CKN}, we show how~\eqref{WLSI} can be seen as a limit case for a family of subcritical Caffarelli-Kohn-Nirenberg inequalities. Notice that $\Gamma$-convergence methods is expected to provide us with an alternative proof of Corollary~\ref{Cor:WLSI1} and Theorem~\ref{Thm:WLSI1}. Section~\ref{Sec:Flows} is devoted to some consequences of our results for the weighted heat flow associated to our weighted logarithmic Sobolev inequalities.

\section{Optimal functions, symmetry and symmetry breaking in \texorpdfstring{\eqref{WLSI}}{WLSI} inequalities}\label{Sec:WLS}

\subsection{The weighted logarithmic Sobolev inequality}

We start by proving that Inequality~\eqref{WLSI} is well-defined.
\begin{lemma}\label{Lem:WLSI} \emph{Let $d\ge1$. Assume that $(\beta,\gamma)\neq(0,0)$ satisfies~\eqref{Range}. Then the inequality
\[
\ird{\frac{|f|^2}{\nrm f{2,\gamma}^2}\,\log\(\frac{|f|^2}{\nrm f{2,\gamma}^2}\)\,|x|^{-\gamma}}\le\mathscr C_{\beta,\gamma}^\star+\frac n2\,\log\(\frac{\nrm{\nabla f}{2,\beta}^2}{\nrm f{2,\gamma}^2}\)\quad\forall\,f\in\mathrm H^{1,1}_{\beta,\gamma}(\R^d)
\]
holds with $\mathscr C_{\beta,\gamma}^\star$ defined by~\eqref{Cstar}.
}\end{lemma}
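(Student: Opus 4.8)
The plan is to reduce the weighted inequality to the well-known sharp Euclidean logarithmic Sobolev inequality by the change of variables $x\mapsto|x|^{\alpha-1}x$ that appears in~\eqref{ChangeOfDimension}, but carried out on a radial profile together with a change of dimension. More precisely, I would argue as follows. By density it suffices to prove the inequality for smooth, compactly supported $f$, and by the scaling invariance of both sides (both sides are invariant under $f\mapsto\lambda^{(d-\gamma)/2}f(\lambda\,\cdot)$) I may normalize $\nrm f{2,\gamma}=1$ and moreover fix the value of $\nrm{\nabla f}{2,\beta}$ to whatever is convenient. The first genuine step is the symmetrization reduction: among functions with prescribed $\nrm f{2,\gamma}$ and $\nrm{\nabla f}{2,\beta}$, the left-hand entropy $\ird{|f|^2\log|f|^2\,|x|^{-\gamma}}$ is maximized by a radial, radially-decreasing function. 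Since the weights $|x|^{-\gamma}$ and $|x|^{-\beta}$ are radial and $|x|^{-\beta}$ is admissible for a weighted Pólya–Szegő inequality when $\beta>\gamma-2$ (so that the weight does not concentrate too badly at the origin), this reduction should go through by the usual weighted rearrangement arguments; I expect this to be the main obstacle, since weighted Pólya–Szegő inequalities with a gradient weight different from the mass weight are delicate and require $\beta$ in exactly the admissible range~\eqref{Range}.

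Granting the radial reduction, write $f(x)=\phi(|x|)$ and pass to the one-dimensional radial integrals. Using $\nu=d-n$ and the substitution $r\mapsto s=r^\alpha/\alpha$ (equivalently, the map of~\eqref{ChangeOfDimension}), the radial form of~\eqref{WLSI} becomes, after absorbing the Jacobian and the surface measure of $\sphere$, exactly the radial case of the \emph{Gaussian-weighted} (equivalently, $n$-dimensional unweighted) logarithmic Sobolev inequality in dimension $n$: this is the content of the reformulation~\eqref{WLSI-alpha}, and the constant $\mathscr C^\star_{\beta,\gamma}$ is designed precisely so that equality is attained by the radial Gaussian $f_\star$ (resp.\ $g_\star$). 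Thus the inequality for radial functions is reduced to Carlen's sharp Euclidean logarithmic Sobolev inequality~\cite{MR1132315} applied in "dimension" $n$, where one must be a little careful that $n$ need not be an integer: but Carlen's inequality, in the form $\ird{|g|^2\log|g|^2\,d\mu}\le \frac n2\log\big(\tfrac{2}{\pi n e}\ird{|\nabla g|^2}\big)$ for unit-mass $g$, holds with the same constant for every real $n>0$ on the level of the one-variable reduction, since its proof is itself a one-dimensional scaling optimization of the $n=1$ (or the Gaussian) inequality. So I would invoke the scaled one-dimensional log-Sobolev inequality directly for the radial profile, with the artificial dimension $n$ entering only through the measure $r^{n-1}\,dr$ (which is what the weights produce), and read off the constant.

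Concretely, the chain I would write is: (1) density and normalization; (2) radial symmetrization via the weighted Pólya–Szegő inequality valid under~\eqref{Range}, which only increases the entropy while leaving $\nrm f{2,\gamma}$ fixed and not increasing $\nrm{\nabla f}{2,\beta}$; (3) the change of variables $y=|x|^{\alpha-1}x$ turning a radial $f\in\mathrm H^1_{\beta,\gamma}$ into a radial $g\in\mathrm H^1_{\nu,\nu}$ with matching norms, reducing~\eqref{WLSI} to~\eqref{WLSI-alpha}; (4) writing~\eqref{WLSI-alpha} for radial $g$ as a one-dimensional logarithmic Sobolev inequality with respect to the measure $\propto r^{n-1}\,dr$, and applying the sharp one-dimensional logarithmic Sobolev inequality (a dilation-optimized form of Gross's inequality) with the Gaussian extremal; (5) tracking constants to see that the optimal one is exactly $\mathscr C^\star_{\beta,\gamma}$ given in~\eqref{Cstar}, with equality for $f_\star$. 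The place where care is needed, besides the Pólya–Szegő step, is checking that the finiteness of all integrals (in particular $\ird{|f|^2\big|\log|f|^2\big|\,|x|^{-\gamma}}<\infty$) is guaranteed for $f\in\mathrm H^{1,1}_{\beta,\gamma}(\R^d)$ under~\eqref{Range}; this follows from the weighted Sobolev embedding implicit in the corresponding (CKN) inequalities of Section~\ref{Sec:CKN}, together with the elementary bound $t\,|\log t|\le C_\varepsilon(t^{1-\varepsilon}+t^{1+\varepsilon})$, so that the left-hand side of the claimed inequality is well-defined and the statement is meaningful.
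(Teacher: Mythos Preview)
Your reduction has a genuine gap at step~(2), and it is fatal rather than a technicality. You propose to pass to the radially decreasing rearrangement $f^*$ with respect to the measure $|x|^{-\gamma}\,dx$, so that $\nrm{f^*}{2,\gamma}=\nrm f{2,\gamma}$ and the entropy term is preserved, and then invoke a weighted P\'olya--Szeg\H{o} inequality $\nrm{\nabla f^*}{2,\beta}\le\nrm{\nabla f}{2,\beta}$. But a P\'olya--Szeg\H{o} inequality in which the gradient carries a weight $|x|^{-\beta}$ \emph{different} from the rearrangement weight $|x|^{-\gamma}$ simply does not hold under~\eqref{Range}. Indeed, if it did, your chain (2)--(5) would show that the functional $f\mapsto\ird{\frac{|f|^2}{\nrm f{2,\gamma}^2}\log\big(\frac{|f|^2}{\nrm f{2,\gamma}^2}\big)|x|^{-\gamma}}-\frac n2\log\big(\frac{\nrm{\nabla f}{2,\beta}^2}{\nrm f{2,\gamma}^2}\big)$ is always maximized among radial functions, hence that every optimizer of~\eqref{WLSI} is radial for all $(\beta,\gamma)$ in~\eqref{Range}. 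This directly contradicts the symmetry breaking established in Theorem~\ref{Thm:WLSI1}\,(i) (and Proposition~\ref{Prop:FS}). The failure of two-weight rearrangement inequalities is precisely the mechanism behind symmetry breaking in Caffarelli--Kohn--Nirenberg type problems; this is why the introduction stresses that ``standard methods like moving planes or symmetrization techniques are not sufficient to cover all cases''. Your caveat that this step ``should go through'' under~\eqref{Range} is therefore incorrect: no condition on $\beta$ short of $\beta=\gamma$ rescues it.

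The paper's argument avoids symmetrization entirely. It combines the \emph{critical} Caffarelli--Kohn--Nirenberg inequality~\eqref{CKNcrit} with exponent $p=2\,\frac{d-\gamma}{d-2-\beta}$ and the logarithmic H\"older inequality~\eqref{logHolder} (obtained by differentiating H\"older's inequality $\nrm u{q,\gamma}\le\nrm u{2,\gamma}^\eta\,\nrm u{p,\gamma}^{1-\eta}$ at $q=2$). This yields~\eqref{WLSI} directly for \emph{all} $f\in\mathrm H^1_{\beta,\gamma}(\R^d)$, with no appeal to radial symmetry; the constant $\mathscr C_{\beta,\gamma}^\star$ is then identified by testing with $f_\star$. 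Your steps (3)--(5), the change of variables and the one-dimensional reduction for radial profiles, are correct and useful for identifying $\mathscr C_{\beta,\gamma}^\star$ as the optimal constant \emph{among radial competitors}, but they cannot replace an argument valid for non-radial $f$.
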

\noindent In other words, we prove here that Inequality~\eqref{WLSI} holds for some constant $\mathscr C_{\beta,\gamma}\le\mathscr C_{\beta,\gamma}^\star$. Since~\eqref{WLSI} is subcritical, it is a standard strategy to establish the inequality using an H\"older interpolation and a critical inequality.
\begin{proof} Let $p:=2\,\frac{d-\gamma}{d-2-\beta}\in(2,2^*)$ with $2^*=+\infty$ if $d=1$, $2$ and $2^*=2\,d/(d-2)$ if $d\ge3$. Let us consider the critical Caffarelli-Kohn-Nirenberg inequality
\be{CKNcrit}
\ird{|\nabla u|^2\,|x|^{-\beta}}\ge\mathcal C_{\rm{CKN}}\(\ird{|u|^p\,|x|^{-\gamma}}\)^{2/p}
\ee
which has been widely studied, see for instance~\cite{Ilyin,Caffarelli1984,Catrina2001,DEL-2015}. Here $\beta$ and $\gamma$ satisfy~\eqref{Range} and $n$ given by~\eqref{n} is such that
\[
\frac p{p-2}=\frac n2\,.
\]
H\"older's inequality
\[
\nrm u{q,\gamma}\le\nrm u2^\eta\,\nrm u{p,\gamma}^{1-\eta}\,,
\]
written with $\eta=2\,\frac{p-q}{q\,(p-2)}$ for any $q\in(2,p)$, degenenerates into an equality as $q\to2_+$. By differentiating this inequality with respect to $q$ at $q=2$, we obtain the \emph{logarithmic H\"older inequality}
\be{logHolder}
\ird{|u|^2\,\log\(\frac{|u|^2}{\nrm u{2,\gamma}^2}\)|x|^{-\gamma}}\le\frac p{p-2}\,\nrm u{2,\gamma}^2\,\log\(\frac{\nrm u{p,\gamma}^2}{\nrm u{2,\gamma}^2}\)
\ee
for any $p>2$. Combined with~\eqref{CKNcrit} in the case $p=2\,\frac{d-\gamma}{d-2-\beta}$, this establishes the \emph{weighted logarithmic Sobolev inequality}
\be{LSI}
\ird{|u|^2\,\log\(\frac{|u|^2}{\nrm u{2,\gamma}^2}\)|x|^{-\gamma}}\le\mathcal A\,\nrm u{2,\gamma}^2\,\log\(\frac{\nrm{\nabla u}{2,\beta}^2}{\nrm u{2,\gamma}^2}\)+\mathcal B\,\nrm u{2,\gamma}^2
\ee
with $\mathcal A=n/2$, $n$ given by~\eqref{n} and $\mathcal B=n\,\log\mathcal C_{\rm{CKN}}$. The value of $\mathcal A$ cannot be improved, as shown by the scaling
\[\label{Scaling}
\lambda\mapsto u_\lambda:=\lambda^{n/2}\,u(\lambda\cdot)\,.
\]
Testing~\eqref{LSI} by $f_\star(x)=c_{n,d}\,\sqrt\alpha\,e^{-\,\frac14\,|x|^{2\alpha}}$ shows that $\mathcal B\ge\mathscr C_{\beta,\gamma}^\star$. The optimal value of $\mathcal B$ is therefore the minimal value for which~\eqref{LSI} holds for any $u\in\mathrm H^1_{\beta,\gamma}(\R^d)$.
\end{proof}

\subsection{Existence of optimal functions}\label{Sec:optimality}

The existence of an optimal function for~\eqref{WLSI} is proved in~\cite{BDS} by concentration-compactness methods when $n=d$ and $\alpha=1$. A similar proof can be found in~\cite{DE2010}, which itself relies on an extension of the concentration-compactness method of~\cite{zbMATH04155282}. The proof in the case $(n,\alpha)\neq(d,1)$ can also be done by the same method. 
\begin{proposition}\label{Prop:WLSIbyCC} \emph{Let $d\ge1$ and asume that $(\beta,\gamma)$ satisfies~\eqref{Range}. Equality in~\eqref{WLSI} is achieved by a function $u\in\mathrm H^1_{\beta,\gamma}(\R^d)$ if $\mathscr C_{\beta,\gamma}$ is taken to its optimal value.}\end{proposition}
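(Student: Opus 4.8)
The plan is to establish the existence of an optimizer for~\eqref{WLSI} (equivalently~\eqref{WLSI-alpha}) by the concentration-compactness method applied to a minimizing sequence for the deficit functional, following the strategy of~\cite{BDS,DE2010,zbMATH04155282}. First I would set up the variational problem in scale-invariant form: by Lemma~\ref{Lem:WLSI} the constant $\mathscr C_{\beta,\gamma}$ is finite, so it is the infimum over $f\in\mathrm H^1_{\beta,\gamma}(\R^d)\setminus\{0\}$, normalized by $\nrm f{2,\gamma}=1$ and $\nrm{\nabla f}{2,\beta}=1$ (using the two scalings $\lambda\mapsto\lambda^{(d-\gamma)/2}f(\lambda\,\cdot)$ and $f\mapsto c\,f$ to fix both the $\mathrm L^2_\gamma$ norm and the gradient norm), of the entropy $\int_{\R^d}|f|^2\log|f|^2\,|x|^{-\gamma}\,dx$; equivalently one minimizes $\mathcal F[f]:=\nrm{\nabla f}{2,\beta}^2$ subject to $\nrm f{2,\gamma}=1$ and a fixed value of the entropy. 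It is cleaner to work with the equivalent unscaled functional~\eqref{WLSI-unscaled}: fix $\sigma=1$ and minimize $\nrm{\nabla f}{2,\beta}^2-\int_{\R^d}|f|^2\log(|f|^2/\nrm f{2,\gamma}^2)\,|x|^{-\gamma}\,dx$ over the set $\nrm f{2,\gamma}=1$, whose infimum is $\frac n2\log\frac{2e}n-\mathscr C_{\beta,\gamma}$.

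Next I would take a minimizing sequence $(f_k)$ and apply the concentration-compactness principle to the sequence of probability measures $\rho_k\,dx$ with $\rho_k:=|f_k|^2\,|x|^{-\gamma}$. One rules out \emph{vanishing} using the logarithmic H\"older inequality~\eqref{logHolder} together with~\eqref{CKNcrit}: if $\rho_k$ vanished, $\nrm{f_k}{p,\gamma}\to0$ locally, but the entropy term is controlled by $\nrm{f_k}{p,\gamma}$ via~\eqref{logHolder}, which combined with the uniform bound on $\nrm{\nabla f_k}{2,\beta}$ along a minimizing sequence would force the deficit up, contradicting minimality — more precisely, vanishing would make the entropy term too negative relative to the Dirichlet term. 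One rules out \emph{dichotomy} by the usual subadditivity argument: the scale invariance (two independent scalings) makes the problem strictly subadditive, i.e. splitting mass strictly increases the infimum, because the map $\theta\mapsto I(\theta)$ (infimum with mass $\theta$) is strictly concave/superadditive under the relevant scaling, exactly as in~\cite{DE2010}. Hence \emph{compactness} holds: up to translations and scalings (which here reduce to dilations, since the weight $|x|^{-\gamma}$ is not translation invariant when $\gamma\neq0$, so in fact no translation is needed unless $\gamma=0$) the sequence $f_k$ converges, and by weak lower semicontinuity of $\nrm{\nabla\cdot}{2,\beta}^2$ together with the continuity of the entropy term on the concentrated sequence, the limit $f$ is an optimizer with $\nrm f{2,\gamma}=1$.

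The main obstacle, and the step requiring the most care, is the compactness of the entropy term: the nonlinearity $s\mapsto s\log s$ is not bounded below by an integrable function on the whole space, and the weight $|x|^{-\gamma}$ is singular at the origin (when $\gamma>0$) and degenerate at infinity. One must show that, along a minimizing sequence enjoying the a priori bounds $\nrm{f_k}{2,\gamma}=1$, $\nrm{\nabla f_k}{2,\beta}\le C$, there is no loss of mass nor of entropy at the origin or at infinity. At infinity this follows from the gradient bound via the weighted Sobolev/CKN embedding into $\mathrm L^p_\gamma$ with $p>2$, giving a uniform tail estimate on $\int_{|x|>R}|f_k|^2\,|x|^{-\gamma}$ and, through~\eqref{logHolder} applied on $\{|x|>R\}$, on the corresponding piece of the entropy. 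Near the origin one uses that $\gamma<d$ in~\eqref{Range}, so $|x|^{-\gamma}$ is locally integrable, and again the embedding into $\mathrm L^p_\gamma$ controls the concentration of both mass and entropy in a small ball $\{|x|<\varepsilon\}$; the positive part of $s\log s$ is handled by the $\mathrm L^p_\gamma$ bound and the negative part by the elementary inequality $s\log s\ge -\,c\,s^\theta$ for $\theta\in(1,p/2)$. Once these two localizations are in place, the weak limit inherits the constraint and the infimum, and replacing $f$ by $|f|$ if necessary, we obtain a nonnegative optimizer $u\in\mathrm H^1_{\beta,\gamma}(\R^d)$, which proves Proposition~\ref{Prop:WLSIbyCC}. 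For the detailed execution I would simply refer to~\cite{DE2010,BDS}, indicating that the presence of the anisotropy $\alpha\neq1$ (equivalently $n\neq d$) does not affect the argument since it only rescales the weights and the embedding exponents, all of which remain in the admissible range by~\eqref{Range}.
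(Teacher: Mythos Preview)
Your concentration--compactness outline is sound and would work, but the paper takes a shorter and rather different route. Instead of attacking~\eqref{WLSI} or~\eqref{WLSI-unscaled} directly, the authors pass to the Schr\"odinger reformulation~\eqref{WLSI-Schr}: writing $h(x)=|x|^{-\nu/2}\,g(x)$ turns the problem into minimizing $\ird{\big(|\,\DD h|^2+V_{\alpha,\nu,\sigma}\,|h|^2-\sigma\,|h|^2\log|h|^2\big)}$ under $\nrm h2=1$, with the explicit potential $V_{\alpha,\nu,\sigma}$ of~\eqref{potential}. The point is that $V_{\alpha,\nu,\sigma}$ is bounded from below and tends to $+\infty$ both as $|x|\to0$ and as $|x|\to\infty$ (since $\nu<0$), so the potential itself provides confinement: along a minimizing sequence with $\nrm{h_k}2=1$ and $\nrm{\,\DD h_k}2$ fixed by scaling, the potential energy $\int V_{\alpha,\nu,\sigma}\,|h_k|^2$ stays bounded, which immediately gives tightness at infinity ($\int_{|x|>R}|h_k|^2\le C/\log R$) and forbids concentration at the origin. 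Compactness in $\mathrm L^2(\R^d)$ then follows by the standard Rellich argument, and the entropy term is handled by the Br\'ezis--Lieb lemma plus a convexity argument borrowed from~\cite{DE2010}.

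What the paper's approach buys is that the vanishing/dichotomy/compactness trichotomy is bypassed entirely: the confining potential makes the problem essentially coercive, so one is doing direct minimization rather than concentration--compactness. Your approach stays in the original weighted formulation and has to work harder to exclude loss of mass at $0$ and $\infty$ via the subcritical embeddings, and to rule out dichotomy by a subadditivity argument. Both are valid; the Schr\"odinger reformulation is simply a cleaner packaging of the same a priori information. One minor slip in your sketch: the bound $s\log s\ge -\,c\,s^\theta$ for the negative part of the entropy holds for $\theta\in(0,1)$, not $\theta\in(1,p/2)$.
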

\begin{proof} We work with the inequality written in the form~\eqref{WLSI-Schr} with $V_{\alpha,\nu,\sigma}$ defined by~\eqref{potential} and rely on direct variational methods. Since $V_{\alpha,\nu,\sigma}$ is bounded from below, there is no significant difficulty compared to the proof of the existence of a minimizer for logarithmic Sobolev inequalities without weights or potentials (see for instance~\cite{BDS,DE2010} for similar results). For completeness, let us give a sketch of a proof. 

Using the homogeneity, let us consider a minimizing sequence $(h_n)_{n\in\N}$ of functions in $\mathrm H^1(\R^d)$ such that $\nrm{h_n}2=1$ for any $n\in\N$ and
\[
\lim_{n\to+\infty}\ird{\(|\,\DD h_n|^2+V_{\alpha,\nu,\sigma}\,|h_n|^2-\sigma\,|h_n|^2\,\log|h_n|^2\)}=\sigma\(\mathscr K_{n,\alpha}+\frac n2\,\log\(\frac{2\,e}{n\,\sigma}\)\)\,.
\]
An optimization under scalings shows that we can choose $\nrm{\,\DD h_n}2=\sigma\,n/2$ with no loss of generality. Using $\nrm{\,\DD h_n}2\ge\min\{1,\alpha\}\,\nrm{\nabla h_n}2$ and the standard Euclidean logarithmic Sobolev inequality, we have that $\big(|h_n|^2\,\log|h_n|^2\big)_{n\in\N}$ and $\big((V_{\alpha,\nu,\sigma})_+^{1/2}\,h_n)_{n\in\N}$ are bounded in $\mathrm L^2(\R^d)$. For any $R>1$ large enough, since
\[
\int_{|x|>R}|h_n|^2\,dx\le\frac1{\log R}\le C\ird{V_{\alpha,\nu,\sigma}\,|h_n|^2}
\]
for some positive constant $C$ and since concentration is forbidden away from origin by standard Gagliardo-Nirenberg embedding inequalities and~\eqref{logHolder} while concentration at $x=0$ would provide us with an infinite contribution to the potential energy term, the sequence $(h_n)_{n\in\N}$ is relatively compact in $\mathrm L^2(\R^d)$. Up to the extraction of a subsequence, $(h_n)_{n\in\N}$ strongly converges in $\mathrm L^2(\R^d)$ to some limit $h$ such that $\nrm h2=1$. According to~\cite[Theorem~2]{zbMATH03834677}, we have
\[
\lim_{n\to+\infty}\ird{|h_n|^2\,\log|h_n|^2}=\ird{|h|^2\,\log|h|^2}+\lim_{n\to+\infty}\ird{|h-h_n|^2\,\log|h-h_n|^2}\,.
\]
By~\eqref{WLSI-Schr} applied to $(h-h_n)$ and a convexity argument as in~\cite{DE2010}, \hbox{$\lim_{n\to+\infty}\ird{|h-h_n|^2\,\log|h-h_n|^2}=0$} and we conclude that $h$ realizes the equality case in~\eqref{WLSI-Schr}. This completes the proof.\end{proof}

\subsection{Linear instability and a symmetry breaking range}\label{Sec:LinearInstability}

With $\mathsf f(\mathsf x,\mathsf y):=\mathsf x^{1-\frac2n}\,e^{\frac2n\,\frac{\mathsf y}{\mathsf x}}$, Inequality~\eqref{WLSI-alpha} becomes
\[
\mathcal F[g]:=\nrm{\,\DD g}{2,\nu}^2-e^{-\frac2n\,\mathscr K_{n,\alpha}}\,\mathsf f\(\ird{|g|^2\,|x|^{-\nu}},\ird{|g|^2\,\log\(|g|^2\)\,|x|^{-\nu}}\)\ge0\,.
\]
We Taylor expand $\mathcal F[g]$ around $g_*$ by computing $\mathsf F[\phi]:=\frac12\,\lim\limits_{\varepsilon\to0}\varepsilon^{-2}\,\mathcal F[g_*+\varepsilon\,\phi]$ and find that
\[
\mathsf F[\phi]=\nrm{\,\DD \phi}{2,\nu}^2-e^{-\frac2n\,\mathscr K_{n,\alpha}}\,\Bigg(\partial_{\mathsf x}\mathsf f(1,\mathsf y_*)\,\nrm\phi{2,\nu}^2+\partial_{\mathsf y}\mathsf f(1,\mathsf y_*)\(3\,\nrm\phi{2,\nu}^2+\ird{\log\(|g_*|^2\)\,|\phi|^2\,|x|^{-\nu}}\)\Bigg)
\]
for any $\phi$ such that $\ird{\(1,|x|^2\)g_*\,\phi\,|x|^{-\nu}}=(0,0)$, where
\[
\ird{|g_*|^2\,|x|^{-\nu}}=1\quad\mbox{and}\quad\mathsf y_*:=\ird{|g_*|^2\,\log\(|g_*|^2\)\,|x|^{-\nu}}=2\,\log c_{n,d}-\frac n2\,.
\]
In the symmetry range, we have $\mathscr K_{n,\alpha}=\mathscr K_{n,\alpha}^\star$ and
\[
e^{-\frac2n\,\mathscr K_{n,\alpha}}\,\mathsf f(1,\mathsf y_*)=e^{-\frac2n\,\mathscr K_{n,\alpha}}\,e^{\frac2n\,\mathsf y_*}=\nrm{\,\DD g_*}{2,\nu}^2=\frac n4\,\alpha^2\,.
\]
Since $\partial_{\mathsf x}\mathsf f(1,\mathsf y_*)=\(1-\frac2n-\frac2n\,\mathsf y_*\)\mathsf f(1,\mathsf y_*)$ and $\partial_{\mathsf y}\mathsf f(1,\mathsf y_*)=\frac2n\,\mathsf f(1,\mathsf y_*)$, in that case we obtain
\[
\mathsf F[\phi]=\nrm{\,\DD \phi}{2,\nu}^2+\frac14\,\alpha^2\ird{|\phi|^2\,|x|^{2+n-d}}-\frac n4\,\alpha^2\(1-\frac2n-\frac2n\,\mathsf y_*+\frac6n+\frac4n\,\log c_{n,d}\)\,,
\]
so that, under the condition $\mathscr K_{n,\alpha}=\mathscr K_{n,\alpha}^\star$, we have
\[
\mathsf F[\phi]=\nrm{\,\DD \phi}{2,\nu}^2-\alpha^2\(1+\frac n2\)\nrm\phi{2,\nu}^2+\frac14\,\alpha^2\ird{|\phi|^2\,|x|^{2+n-d}}\,.
\]
\begin{lemma} \emph{Let $n>1$ and $\alpha>0$ be two real numbers and consider any integer $d\ge2$. If $\mathscr K_{n,\alpha}=\mathscr K_{n,\alpha}^\star$, the lowest nonradial eigenmode associated with the quadratic form $\phi\mapsto\mathsf F[\phi]$ is
\be{lambda1}
\lambda_1(\alpha)=\frac\alpha2\(\sqrt{4\,(d-1)+\alpha^2\,(n-2)^2}-\alpha\,n\)\,.
\ee
}\end{lemma}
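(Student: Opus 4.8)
The plan is to compute the spectrum of the quadratic form $\phi\mapsto\mathsf F[\phi]$ restricted to nonradial perturbations by diagonalizing it against the natural eigenbasis adapted to the weighted operator. First I would recall that
\[
\mathsf F[\phi]=\nrm{\,\DD \phi}{2,\nu}^2-\alpha^2\(1+\tfrac n2\)\nrm\phi{2,\nu}^2+\tfrac14\,\alpha^2\ird{|\phi|^2\,|x|^{2+n-d}}
\]
and that $\nrm{\,\DD\phi}{2,\nu}^2=\ird{\bigl(\alpha^2\,|\partial_r\phi|^2+r^{-2}\,|\nabla_\omega\phi|^2\bigr)\,|x|^{-\nu}}$ in spherical coordinates. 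Writing $\phi$ in terms of spherical harmonics, $\phi(r,\omega)=\sum_\ell\varphi_\ell(r)\,Y_\ell(\omega)$, the angular part contributes $\ell(\ell+d-2)$ on the $\ell$-th sector, so the form decouples over $\ell$, and the lowest nonradial mode corresponds to $\ell=1$, i.e.\ angular eigenvalue $d-1$. For each fixed $\ell$ the problem reduces to a one-dimensional weighted Schrödinger-type operator in the radial variable on $(0,\infty)$ with measure $r^{d-1-\nu}\,dr$, namely (after using $\nu=d-n$) the operator
\[
\mathcal L_\ell\varphi=-\alpha^2\,\frac1{r^{n-1}}\,\partial_r\bigl(r^{n-1}\,\partial_r\varphi\bigr)+\frac{\ell(\ell+d-2)}{r^2}\,\varphi+\frac14\,\alpha^2\,r^2\,\varphi\,,
\]
acting with weight $r^{n-1}$, whose spectrum we must compute and then subtract $\alpha^2(1+n/2)$.

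The key computational step is recognizing $\mathcal L_\ell$ as a weighted harmonic-oscillator (Laguerre-type) operator. After the substitution $s=r^2$ or, equivalently, by comparison with the model for which $g_\star=c_{n,d}\,e^{-r^2/4}$ is the ground state, one checks that the radial eigenfunctions are of the form $r^{a_\ell}\,e^{-r^2/4}\,L_k^{(b)}(r^2/2)$ for generalized Laguerre polynomials $L_k^{(b)}$, with the exponent $a_\ell$ fixed by requiring the singular term $\ell(\ell+d-2)/r^2$ to be absorbed: $\alpha^2\,a_\ell(a_\ell+n-2)=\ell(\ell+d-2)$, hence $a_\ell=\tfrac12\bigl(2-n+\sqrt{(n-2)^2+4\ell(\ell+d-2)/\alpha^2}\bigr)$. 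The corresponding eigenvalues of $\mathcal L_\ell$ are then $\alpha^2\bigl(\tfrac{a_\ell}{2}+1+\tfrac n2+k\bigr)\cdot$(a normalization constant I will pin down by testing on the explicit lowest eigenfunction $r^{a_\ell}e^{-r^2/4}$), so that after subtracting $\alpha^2(1+n/2)$ the bottom of the $\ell$-th nonradial branch ($k=0$) is $\tfrac12\,\alpha^2\,a_\ell$. Specializing to $\ell=1$, where $\ell(\ell+d-2)=d-1$, gives
\[
\lambda_1(\alpha)=\tfrac12\,\alpha^2\,a_1=\frac{\alpha^2}{4}\Bigl(2-n+\sqrt{(n-2)^2+4(d-1)/\alpha^2}\Bigr)=\frac\alpha2\Bigl(\sqrt{4(d-1)+\alpha^2(n-2)^2}-\alpha\,n\Bigr),
\]
which is exactly~\eqref{lambda1}. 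Finally one must argue that $k=0,\ell=1$ genuinely gives the smallest \emph{nonradial} eigenvalue: monotonicity in $k$ is immediate since increasing $k$ adds positive multiples of $\alpha^2$, and monotonicity in $\ell$ follows because $a_\ell$ is increasing in $\ell(\ell+d-2)$, hence in $\ell\ge1$.

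I expect the main obstacle to be handling the functional-analytic subtleties rather than the algebra: one must verify that $\mathcal L_\ell$ with the weight $r^{n-1}$ is essentially self-adjoint on the relevant core, that the Friedrichs extension has purely discrete spectrum (which is plausible since $V_{\alpha,\nu,\sigma}\to+\infty$ at both $0$ and $\infty$, as noted after~\eqref{potential}), and—crucially—that the candidate eigenfunctions $r^{a_\ell}e^{-r^2/4}L_k^{(b)}(r^2/2)$ actually lie in the form domain, i.e.\ that $a_\ell$ is large enough near $r=0$ (here the constraint $n>1$ enters, guaranteeing the ground-state branch of each sector is square-integrable against $r^{n-1}\,dr$). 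A clean way to bypass explicit Laguerre identities is to conjugate: set $\phi=g_\star\,\psi$, so that $\mathsf F[\phi]$ becomes a weighted Dirichlet form $\int|\DD\psi|^2\,d\nu_* + (\text{angular term})$ plus lower-order contributions as in~\eqref{Gaussian-2}, reducing the computation of $\lambda_1(\alpha)$ to finding the bottom eigenvalue of an Ornstein–Uhlenbeck–type operator twisted by the angular term $\ell(\ell+d-2)/r^2$ on the $\ell=1$ sector, which is a standard Hermite/Laguerre spectral computation; I would present the proof in that form, since then the only genuinely delicate point is the admissibility of the $\ell=1$ eigenfunction under the orthogonality constraints $\ird{(1,|x|^2)\,g_\star\,\phi\,|x|^{-\nu}}=(0,0)$, which one checks directly using the oddness of $Y_1$.
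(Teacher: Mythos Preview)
Your approach is essentially the paper's: decompose into spherical harmonics, restrict to the $\ell=1$ sector (angular eigenvalue $d-1$), and solve the resulting radial eigenvalue problem via the explicit ansatz $\varphi(r)=r^{a}e^{-r^2/4}$. The paper does this in three lines, simply writing down the radial ODE and checking that $\varphi(r)=r^{1+\delta}e^{-r^2/4}$ with $\delta=\lambda_1/\alpha^2$ works; your version adds the Laguerre structure and some functional-analytic remarks, but the core is identical.

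However, the eigenvalue computation in your proposal is wrong. Direct substitution of $\varphi=r^{a}e^{-r^2/4}$ into $\mathcal L_\ell$ gives
\[
\mathcal L_\ell\varphi=\Bigl[\bigl(\ell(\ell+d-2)-\alpha^2\,a(a+n-2)\bigr)\,r^{-2}+\alpha^2\Bigl(a+\tfrac n2\Bigr)\Bigr]\varphi\,,
\]
so once $\alpha^2\,a(a+n-2)=\ell(\ell+d-2)$ the eigenvalue of $\mathcal L_\ell$ is $\alpha^2(a+\tfrac n2)$, not $\alpha^2(\tfrac a2+1+\tfrac n2)$. After subtracting $\alpha^2(1+\tfrac n2)$ you get $\lambda_1=\alpha^2(a_1-1)$, not $\tfrac12\,\alpha^2\,a_1$. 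Your displayed chain
\[
\tfrac12\,\alpha^2\,a_1=\tfrac{\alpha^2}{4}\Bigl(2-n+\sqrt{(n-2)^2+4(d-1)/\alpha^2}\Bigr)\stackrel{?}{=}\tfrac\alpha2\Bigl(\sqrt{4(d-1)+\alpha^2(n-2)^2}-\alpha\,n\Bigr)
\]
is algebraically false in general (equating the two sides forces $\alpha^2=(d-1)/(2n)$). The correct line is
\[
\lambda_1=\alpha^2(a_1-1)=\tfrac{\alpha^2}{2}\Bigl(-n+\sqrt{(n-2)^2+4(d-1)/\alpha^2}\Bigr)=\tfrac\alpha2\Bigl(\sqrt{4(d-1)+\alpha^2(n-2)^2}-\alpha\,n\Bigr)\,,
\]
which is~\eqref{lambda1}. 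With this fix your argument goes through; your remarks on monotonicity in $k$ and $\ell$ and on the orthogonality constraints (automatically satisfied on nonradial sectors by angular integration) are in fact more careful than what the paper records.
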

\begin{proof} We use a decomposition into spherical harmonics. Since the lowest eigenvalue of the Laplace-Beltrami operator on $\S^{d-1}$ is $(d-1)$, we have to solve the eigenvalue problem
\[
-\,\alpha^2\(\varphi''+\frac{n-1}r\,\varphi'\)+\frac{d-1}{r^2}\,\phi+\frac{\alpha^2}4\,r^2\,\varphi=\lambda_1\,\varphi
\]
for some positive radial function $r\mapsto\varphi(r)$, $r\in (0,+\infty)$. Elementary computations show that $\varphi(r)=r^{1+\delta}\,e^{-r^2/4}$ solves the equation with $\delta=\lambda_1(\alpha)/\alpha^2$ and $\lambda_1(\alpha)$ given by~\eqref{lambda1}.\end{proof}

On $\R^+$, it is an elementary computation to check that $\alpha\mapsto\lambda_1(\alpha)$ takes negative values if and only if
\be{SymBreaking}
\alpha>\alpha_{\mathrm{FS}}=\sqrt{\frac{d-1}{n-1}}\,.
\ee
Here we find exactly the \emph{Felli \& Schneider condition} for symmetry breaking as \hbox{in~\cite{Felli2003,Dolbeault2017}}.
\begin{proposition}\label{Prop:FS} \emph{If~\eqref{SymBreaking} holds, then $\mathscr K_{n,\alpha}<\mathscr K_{n,\alpha}^\star$.}\end{proposition}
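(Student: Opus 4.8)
The plan is to deduce Proposition~\ref{Prop:FS} directly from the linear instability established in the preceding Lemma. The strategy is the standard ``test function'' half of the Felli--Schneider argument: if the second variation of the relevant functional at the radial candidate $g_\star$ is negative in some direction, then $g_\star$ cannot be optimal, hence the optimal constant must be strictly better than the one it produces.

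More precisely, first I would recall that by the Lemma, under the hypothesis $\mathscr K_{n,\alpha}=\mathscr K_{n,\alpha}^\star$, the quadratic form $\phi\mapsto\mathsf F[\phi]$ has lowest nonradial eigenvalue $\lambda_1(\alpha)$ given by~\eqref{lambda1}, with associated eigenfunction of the form $\phi_1(x)=|x|^{1+\delta}\,e^{-|x|^2/4}\,Y_1(x/|x|)$, where $Y_1$ is a first spherical harmonic and $\delta=\lambda_1(\alpha)/\alpha^2$; and that~\eqref{SymBreaking} is precisely the condition under which $\lambda_1(\alpha)<0$. Note that $\phi_1$ satisfies the two orthogonality constraints $\ird{(1,|x|^2)\,g_\star\,\phi_1\,|x|^{-\nu}}=(0,0)$, since $\phi_1$ is odd in the angular variable against the radial weight $g_\star\,|x|^{-\nu}$, and that $\phi_1\in\mathrm H^1_{\nu,\nu}(\R^d)$ because of the Gaussian decay (here $\delta>-1$ needs a brief check, but $1+\delta>0$ follows from $\lambda_1(\alpha)>-\alpha^2$, which is elementary). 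Then I argue by contradiction: suppose $\mathscr K_{n,\alpha}=\mathscr K_{n,\alpha}^\star$. By Proposition~\ref{Prop:WLSIbyCC} (or rather its $n$-dimensional counterpart via~\eqref{ChangeOfDimension}), the optimal value $\mathscr K_{n,\alpha}^\star$ is attained, and $g_\star$ is then an optimizer by Lemma~\ref{Lem:WLSI}. Consider the curve $\varepsilon\mapsto g_\star+\varepsilon\,\phi_1$. Since $\mathcal F[g]\ge0$ for all admissible $g$ with equality at $g_\star$ when $\mathscr K_{n,\alpha}=\mathscr K_{n,\alpha}^\star$, the scaling/normalization is arranged so that $\mathcal F[g_\star]=0$, and the Taylor expansion gives $\mathcal F[g_\star+\varepsilon\,\phi_1]=2\,\varepsilon^2\,\mathsf F[\phi_1]+o(\varepsilon^2)=2\,\varepsilon^2\,\lambda_1(\alpha)\,\nrm{\phi_1}{2,\nu}^2+o(\varepsilon^2)<0$ for $\varepsilon\neq0$ small, since $\lambda_1(\alpha)<0$ under~\eqref{SymBreaking}. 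This contradicts $\mathcal F\ge0$, so $\mathscr K_{n,\alpha}<\mathscr K_{n,\alpha}^\star$.

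I expect the only genuinely delicate point to be bookkeeping around the functional $\mathcal F$: one must make sure that $\mathsf F[\phi]$ as computed in the Lemma really is (half) the second variation of $\mathcal F$ along admissible perturbations that preserve the relevant normalizations, i.e. that the first-order term vanishes (which is why the orthogonality conditions $\ird{(1,|x|^2)\,g_\star\,\phi\,|x|^{-\nu}}=(0,0)$ were imposed) and that no extra second-order contributions have been dropped. The excerpt has already done this computation, so in the write-up I would simply invoke it; alternatively, one can avoid touching $\mathcal F$ altogether and work with the equivalent non-scale-invariant form~\eqref{WLSI-alpha-unscaled}, whose left-hand side minus right-hand side is a genuine functional whose minimum over $\mathrm H^1(\R^d,d\nu_\sigma)$ equals $\sigma\,(\mathscr K_{n,\alpha}^\star-\mathscr K_{n,\alpha})\cdot 0$ plus the constraint — testing~\eqref{Gaussian-2} with $v=1+\varepsilon\,\psi$ for $\psi=|x|^{1+\delta-\nu/2}\,Y_1$ and reading off the sign of the $\varepsilon^2$ coefficient. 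Either route reduces to the same one-line conclusion. A secondary routine check is the admissibility of $\phi_1$ (membership in the energy space and finiteness of the logarithmic integral $\ird{\log(|g_\star|^2)\,|\phi_1|^2\,|x|^{-\nu}}$), which follows from the explicit Gaussian form of $g_\star$ and $\phi_1$ together with $1+\delta>0$ and $\nu<0$.

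Thus the proof is short: combine the explicit negativity criterion $\lambda_1(\alpha)<0\iff\alpha>\alpha_{\mathrm{FS}}$ from the Lemma with the attainment of the optimal constant from Proposition~\ref{Prop:WLSIbyCC}, and use the second-order Taylor expansion of $\mathcal F$ at $g_\star$ to exhibit a competitor strictly below the conjectured sharp level, forcing $\mathscr K_{n,\alpha}<\mathscr K_{n,\alpha}^\star$.
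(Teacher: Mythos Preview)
Your proposal is correct and follows essentially the same contradiction argument as the paper: assume $\mathscr K_{n,\alpha}=\mathscr K_{n,\alpha}^\star$, so that $g_\star$ is optimal, then perturb by the nonradial eigenfunction associated with $\lambda_1(\alpha)<0$ to produce $\mathcal F<0$, contradicting the definition of $\mathscr K_{n,\alpha}$. Your write-up supplies more detail (orthogonality, admissibility, $1+\delta>0$) than the paper's three-line version, and one small remark: you do not actually need Proposition~\ref{Prop:WLSIbyCC} here, since $g_\star$ being optimal under the hypothesis $\mathscr K_{n,\alpha}=\mathscr K_{n,\alpha}^\star$ follows directly from the fact that $\mathscr K_{n,\alpha}^\star$ is by construction the constant achieved by $g_\star$.
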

\begin{proof}
We argue by contradiction. If $\mathscr K_{n,\alpha}=\mathscr K_{n,\alpha}^\star$, then $g_\star$ is an optimal function. However, a perturgation of $g_\star$ by an eigenfunction associated with the eigenvalue $\lambda_1(\alpha)<0$ given by~\eqref{lambda1} proves that $\mathcal F$ takes negative values, a contradiction with the definition of $\mathscr K_{n,\alpha}$.
\end{proof}

We learn from Propositions~\ref{Prop:WLSIbyCC} and~\ref{Prop:FS} that Inequality~\eqref{WLSI} admits only non-radial optimal functions if~\eqref{SymBreaking} holds. The next step is to prove that all optimal functions for~\eqref{WLSI} are radially symmetric if $\alpha\le\alpha_{\mathrm{FS}}$ and given up to a multiplication by a constant and a scaling by $f_\star$.

\subsection{A symmetry result by the \texorpdfstring{\emph{carr\'e du champ}}{CarreDuChamp} method}\label{Sec:CarreDuChamp}

This section is devoted to the proof of Theorem~\ref{Thm:WLSI1}, Part (ii), corresponding to the symmetry case, which is the difficult range. The method relies on the \emph{carr\'e du champ} method and it is inspired from~\cite[Section~3]{DEL-JEPE}. However, the presence of the logarithmic nonlinearity imposes various non-trivial changes that are detailed below. Altogether, this is a striking application of the nonlinear \emph{carr\'e du champ} method and we give a complete and self-contained proof. Computations which have already appeared in the context of~\eqref{CKN} inequalities are clearly indicated. We consider here the \emph{weighted logarithmic Sobolev inequality} written in the form of~\eqref{WLSI-alpha-unscaled} with $\sigma=1/2$. If $\Dstar$ is the adjoint operator of~$\D$ acting on vector-valued functions $\F$, with respect to the measure
\[
d\mu_n:=r^{n-1}\,dr\,d\omega\,,
\]
then we have
\[
\Dstar \F=-\,|x|^{d-n}\,\nabla\cdot (|x|^{n-d}\,\F)-(\alpha-1)\,r^{1-n}\,\omega\cdot\partial_r(r^{n-1}\,\F)\,.
\]
Moreover we have the useful identity
\be{Id1}
\Dstar(u\,\F)=-\,\D u\cdot \F+u\,\Dstar \F
\ee
if $u$ and $\F$ are respectively scalar- and vector-valued functions. Let us define the operator $\mathsf L_\alpha$ by
\[
\mathsf L_\alpha=-\,\Dstar\D=\alpha^2\(\partial^2_r+\frac{n-1}r\,\partial_r\)+\frac{\Delta_\omega}{r^2}\,,
\]
where $\Delta_\omega$ denotes the Laplace-Beltrami operator on $\sphere$, and consider the Fokker-Planck equation
\be{Eqn}
\frac{\partial u}{\partial t}=-\,\Dstar(u\,\F)
\ee
where the flux $\F$ and the relative pressure variable $\mathsf p$ are defined by
\[
\F(t,x):=\D\log u+x=\D\(\log u+\frac{|x|^2}{2\,\alpha^2}\)=\D\mathsf p\,,\quad\mathsf p:=\log u-2\,\log g_\star^{\alpha,\frac12}
\]
and $g_\star^{\alpha,1/2}$ is the normalized optimal function for~\eqref{WLSI-alpha-unscaled} with $\sigma=1/2$. Hermite functions are dense and stable under the action of the flow~\eqref{Eqn} so that we can always work on a finite dimensional space generated by some Hermite functions and argue by density. Since there is no difficulty in integrating by parts, we will do it without further justification. This is the first major difference with~\cite{DEL-JEPE} where a nonlinear flow is considered and one has to do an approximation procedure on larger and larger balls. To simplify the proof, we divide it in several simple statements. Our main goal is to prove the exponential decay of the \emph{Fisher information}, which goes as follows.
\begin{lemma}\label{Lem:ExpDecay} \emph{Assume that $d\ge2$, $n>d$ and $\alpha\in(0,\alpha_{\rm{FS}}]$. If $u$ solves~\eqref{Eqn} with $\F=\D\mathsf p$, then 
\[
\frac d{dt}\iwrd{u\,|\F|^2}\le-\,2\,\alpha\iwrd{u\,|\F|^2}\,.
\]
}\end{lemma}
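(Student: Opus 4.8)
The plan is to implement the \emph{carré du champ} (Bakry--Émery) method for the linear weighted Fokker--Planck flow~\eqref{Eqn}. Set $\mathsf i(t):=\iwrd{u\,|\F|^2}$ with $\F=\D\mathsf p$, the weighted Fisher information. Along the flow, $u$ stays a probability density with respect to $d\mu_n$ (if normalized so at $t=0$), and the entropy $\iwrd{u\log(u/u_\star)}$ — where $u_\star=(g_\star^{\alpha,1/2})^2$ — decays with $\frac{d}{dt}$(entropy)$=-\mathsf i(t)$ by a first integration by parts using~\eqref{Id1}. The goal is then the differential inequality $\mathsf i'(t)\le-2\alpha\,\mathsf i(t)$, whose integration yields exponential decay and, letting $t\to+\infty$ against the asymptotic Gaussian state, the sharp inequality~\eqref{WLSI-alpha-unscaled} with $\mathscr K_{n,\alpha}=\mathscr K_{n,\alpha}^\star$.

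First I would compute $\frac{d}{dt}\mathsf i(t)$ by differentiating under the integral, using $\partial_t u=-\Dstar(u\,\F)$ and $\partial_t\F=\D\partial_t\mathsf p=\D\big(\tfrac{\partial_t u}{u}\big)$. After integrations by parts (legitimate here by density of Hermite functions, as emphasized in the text), this produces the standard two-term structure: a Bochner-type term quadratic in the Hessian of $\mathsf p$ (in the $\D$-geometry), plus a zeroth-order term proportional to $\mathsf i(t)$ coming from the drift $x$ in $\F$. Concretely one expects
\[
\tfrac12\,\mathsf i'(t)+\alpha\,\mathsf i(t)=-\iwrd{u\,\big(\text{quadratic form in }\D\otimes\D\,\mathsf p\big)}\,,
\]
so the claim reduces to showing that the integrand on the right is nonnegative, i.e. that the relevant ``$\Gamma_2$'' quadratic form is nonnegative under the hypothesis $\alpha\le\alpha_{\rm FS}$.

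The main obstacle — and the heart of the matter — is precisely this pointwise/integrated nonnegativity of the $\Gamma_2$ term, which is where the dimensional restriction $n>d$ (so $\nu<0$) and the Felli--Schneider threshold $\alpha_{\rm FS}=\sqrt{(d-1)/(n-1)}$ enter. Following the scheme of~\cite[Section~3]{DEL-JEPE}, I would decompose $\D\otimes\D\,\mathsf p$ into its radial part, its mixed radial-angular part, and its purely angular (Hessian on $\sphere$) part; complete squares; and control the indefinite cross terms and the curvature contribution of $\sphere$ using the Laplace--Beltrami operator's lowest eigenvalue $d-1$, together with an integration by parts in the angular variable that trades $|\nabla_\omega\partial_r\mathsf p|^2$-type terms against $|\nabla_\omega\mathsf p|^2$. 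The algebra collapses, after choosing the optimal weights in the squares, to a condition of the form $\alpha^2(n-1)\le d-1$, i.e. exactly $\alpha\le\alpha_{\rm FS}$; outside this range the form is indefinite, consistent with the symmetry breaking of Proposition~\ref{Prop:FS}. The logarithmic nonlinearity (versus the power nonlinearity of~\eqref{CKN}) simplifies matters in that the pressure $\mathsf p$ enters linearly through $\F=\D\mathsf p$ with no extra $\mathsf p$-dependent coefficients, so the ``nonlinear'' correction terms present in the \eqref{CKN} computation drop out — this is the ``non-trivial change'' the text alludes to, and it should make the $\Gamma_2$ estimate cleaner rather than harder, the only genuine care being the boundary terms at $r=0$ and $r=+\infty$, which vanish on the dense Hermite class.

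One technical point I would verify carefully: the admissibility of the integrations by parts requires that $u$ remains in a space where $\F$, $\D\F$ and their products against $u$ decay appropriately; since Hermite functions are dense in $\mathrm H^1(\R^d,d\nu_{1/2})$ and stable under~\eqref{Eqn}, it suffices to prove Lemma~\ref{Lem:ExpDecay} for such $u$ and pass to the limit — so I would phrase all computations on that subclass and invoke density at the end. With the $\Gamma_2$ inequality in hand, Grönwall gives $\mathsf i(t)\le\mathsf i(0)\,e^{-2\alpha t}$, and combining with $\frac{d}{dt}$(entropy)$=-\mathsf i(t)$ and the convergence of $u(t,\cdot)$ to $u_\star$ yields entropy $\le\frac{1}{2\alpha}\mathsf i(0)$, which is exactly~\eqref{WLSI-alpha-unscaled} evaluated at the initial datum with the sharp constant $\mathscr K_{n,\alpha}^\star$; equality forces $\D\otimes\D\,\mathsf p\equiv0$ in the degenerate directions, hence $\mathsf p$ affine and the optimizer radial, giving Part (ii) of Theorem~\ref{Thm:WLSI1}.
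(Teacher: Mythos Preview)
Your plan is the paper's own: differentiate the Fisher information along~\eqref{Eqn}, isolate the drift contribution $-2\alpha\,\mathsf i(t)$ (this is exactly the identity of Lemma~\ref{ExpDecay}), and show the residual $\iwrd{u\,\mathsf K[\D\mathsf p]}+\tfrac1n\iwrd{u\,(\Dstar\F)^2}\ge0$ via a radial/angular split (Lemma~\ref{Lem:4.2}). The structure you describe, including the density argument through Hermite functions, matches the text.

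Two points in your sketch need correction, and both concern the angular estimate, which is where the real work lies. First, ``the Laplace--Beltrami operator's lowest eigenvalue $d-1$'' does \emph{not} suffice: the quantity to control is $\isph{u\,\mathsf k[\mathsf p]}$, an integral on $\sphere$ \emph{weighted by} $u$, so no Poincar\'e-type inequality is directly available. For $d\ge3$ the paper instead uses the pointwise Bochner--Lichnerowicz--Weitzenb\"ock identity together with a trace-free decomposition into $\mathrm L_\omega\mathsf p$ and $\mathrm M_\omega\mathsf p$, and only after completing squares in these tensors does the coefficient $(n-2)(\alpha_{\rm FS}^2-\alpha^2)$ emerge alongside a strictly positive quartic remainder $\delta\isph{u\,|\nabla_\omega\mathsf p|^4}$. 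Second, your claim that ``the nonlinear correction terms present in the~\eqref{CKN} computation drop out'' is backwards. The relation $\nabla_\omega u=u\,\nabla_\omega\mathsf p$ means every angular integration by parts against the weight $u$ spawns an extra factor $|\nabla_\omega\mathsf p|^2$, producing terms such as $\isph{u\,\Delta_\omega\mathsf p\,|\nabla_\omega\mathsf p|^2}$ and $\isph{u\,|\nabla_\omega\mathsf p|^4}$ that must be absorbed into the trace-free squares; this is precisely where the paper says ``many details have to be changed'' relative to~\cite{Dolbeault2017}, not where the problem simplifies. If you attempt the sphere step with only a spectral gap argument, it will fail.
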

\noindent It is straightforward to check that
\[
\frac{\partial\F}{\partial t}+\D\,\Big(u^{-1}\,\Dstar\(u\,\F\)\Big)=0
\]
and, as a consequence,
\begin{multline*}
\frac d{dt}\iwrd{u\,|\F|^2}=\iwrd{\frac{\partial u}{\partial t}\,|\F|^2}+2\iwrd{u\,\F\cdot\frac{\partial\F}{\partial t}}\\
=-\iwrd{\Dstar(u\,\F)\,|\F|^2}-2\iwrd{u\,\F\cdot\D\,\Big(u^{-1}\,\Dstar\(u\,\F\)\Big)}\,.
\end{multline*}
A first integration by parts shows that
\[
\iwrd{\Dstar(u\,\F)\,|\F|^2}=\iwrd{u\,\F\,\cdot\D\(|\F|^2\)}\,.
\]
Using~\eqref{Id1} and $\D\log u=\F-x$, we get
\begin{eqnarray*}
\frac d{dt}\iwrd{u\,|\F|^2}&\hspace*{-5pt}=&\hspace*{-5pt}-\iwrd{u\,\F\,\cdot\D\(|\F|^2\)}-2\iwrd{u\,\F\cdot\D\(\Dstar\F-\F\cdot\D \log u\)}\\
&\hspace*{-5pt}=&\hspace*{-5pt}-\iwrd{u\,\F\,\cdot\D\(|\F|^2\)}-2\iwrd{u\,\F\cdot\D\(\Dstar\F-|\F|^2+\F\cdot x\)}\\
&\hspace*{-5pt}=&\hspace*{-5pt}\iwrd{u\,\F\,\cdot\D\(|\F|^2\)}-2\iwrd{u\,\F\cdot\D(\F\cdot x)}-2\iwrd{u\,\F\cdot\D\(\Dstar\F\)}\,.
\end{eqnarray*}
Using $u\,\F=\D u+x\,u$ and integrating by parts, we obtain
\[
\iwrd{u\,\F\,\cdot\D\(|\F|^2\)}=-\iwrd{u\,\mathsf L_\alpha\(|\F|^2\)}+\iwrd{u\,x\,\cdot\D\(|\F|^2\)}\,.
\]
Hence
\[
\frac d{dt}\iwrd{u\,|\F|^2}=-\,2\iwrd{u\,\mathsf K[\mathsf F]}+\iwrd{u\,x\,\cdot\D\(|\F|^2\)}-2\iwrd{u\,\F\cdot\D(\F\cdot x)}-\frac2n\iwrd{u\,(\Dstar\F)^2}
\]
with
\be{FK}
\mathsf K[\mathsf F]:=\frac12\,\mathsf L_\alpha\(|\F|^2\)+\F\cdot\D\(\Dstar\F\)-\frac1n\,(\Dstar\F)^2\,.
\ee
We recall that $\F=\D\mathsf p$ so that
\[
\mathsf K[\D\mathsf p]=\frac12\,\mathsf L_\alpha\,|\DD\p|^2-\,\DD\p\cdot\DD\mathsf L_\alpha\p-\frac1n\,(\mathsf L_\alpha\p)^2\,.
\]
Let us state a result inspired by~\cite[Lemma~5.1]{DEL-2015},~\cite[Lemma~4.2 and Lemma~4.3]{Dolbeault2017}.
\begin{lemma}\label{Lem:4.2} \emph{With the above notations, we have the two following estimates:
\begin{itemize}
\item[\rm (i)] Pointwise estimate:
\[
\mathsf K[\D\mathsf p]=\alpha^4\(1-\frac1n\)\left|\mathsf p''-\frac{\mathsf p'}r-\frac{\Delta_\omega\,\mathsf p}{\alpha^2\,(n-1)\,r^2}\right|^2+\frac{2\,\alpha^2}{r^2}\left|\nabla_\omega\mathsf p'-\frac{\nabla_\omega\mathsf p}r \right|^2+\frac{\mathsf k[\mathsf p]}{r^4}
\]
where
\[
\mathsf k[\mathsf p]:=\frac12\,\Delta_\omega\,|\nabla_\omega\mathsf p|^2-\nabla_\omega\mathsf p\cdot\nabla_\omega(\Delta_\omega\,\mathsf p)-\frac1{n-1}\,(\Delta_\omega\,\mathsf p)^2-(n-2)\,\alpha^2\,|\nabla_\omega\mathsf p|^2\,.
\]
\item[\rm (ii)] Integral estimate on the sphere $\mathbb S^{d-1}$ if $d\ge2$:
\[
\isph{\mathsf k[\mathsf p]\,u}\ge(n-2)\(\alpha_{\rm FS}^2-\alpha^2\)\isph{|\nabla_\omega\mathsf p|^2\,u}+\delta\isph{|\nabla_\omega\mathsf p|^4}
\]
where $\delta$ is a positive constant depending only on $n$ and $d$.
\end{itemize}
}\end{lemma}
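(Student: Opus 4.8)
The plan is to establish the pointwise identity (i) by a direct but careful expansion of $\mathsf K[\D\mathsf p]$ in spherical coordinates, and then to deduce the integral estimate (ii) by integrating $\mathsf k[\mathsf p]$ over the sphere against the weight $u$ and invoking spectral/Poincar\'e-type estimates on $\S^{d-1}$.

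For part (i), I would start from the definition~\eqref{FK} of $\mathsf K[\mathsf F]$ with $\mathsf F=\D\mathsf p$, so that $\Dstar\F=-\mathsf L_\alpha\mathsf p$, and write everything in the coordinates $(r,\omega)$ using $\DD w=(\alpha\,\partial_r w,\,\tfrac1r\nabla_\omega w)$ and $\mathsf L_\alpha=\alpha^2\big(\partial_r^2+\tfrac{n-1}r\partial_r\big)+\tfrac1{r^2}\Delta_\omega$. The computation of $\tfrac12\mathsf L_\alpha|\DD\mathsf p|^2$, of $\DD\mathsf p\cdot\DD(\mathsf L_\alpha\mathsf p)$ and of $\tfrac1n(\mathsf L_\alpha\mathsf p)^2$ produces a sum of terms involving $\mathsf p''$, $\mathsf p'/r$, $\Delta_\omega\mathsf p/r^2$, $\nabla_\omega\mathsf p'/r$, $\nabla_\omega\mathsf p/r^2$, and the purely angular quantities $\Delta_\omega|\nabla_\omega\mathsf p|^2$, $\nabla_\omega\mathsf p\cdot\nabla_\omega(\Delta_\omega\mathsf p)$, $(\Delta_\omega\mathsf p)^2$, $|\nabla_\omega\mathsf p|^2$. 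The miracle — familiar from the analogous computations in~\cite[Lemma~5.1]{DEL-2015} and~\cite[Lemma~4.2]{Dolbeault2017} for (CKN) — is that, after using the commutation relations between $\partial_r$ and $\nabla_\omega$, these terms organize into the announced sum of a perfect square with coefficient $\alpha^4(1-\tfrac1n)$, a perfect square with coefficient $2\alpha^2/r^2$, and the remainder $\mathsf k[\mathsf p]/r^4$. The only genuinely new feature relative to the (CKN) case is that here there is no nonlinearity in the diffusion (the flow~\eqref{Eqn} is linear), which if anything simplifies the bookkeeping: one is essentially computing a weighted Bochner–Lichnerowicz identity for the operator $\mathsf L_\alpha$, and the coefficients come out exactly as in the $m=1$ (linear) specialization of the weighted computations already in the literature. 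I would present the grouping of terms and the cancellations explicitly, citing the earlier references for the parts that are verbatim.

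For part (ii), I would integrate $\mathsf k[\mathsf p]\,u$ over $\S^{d-1}$ and handle the first three angular terms by integration by parts on the sphere. The combination $\tfrac12\Delta_\omega|\nabla_\omega\mathsf p|^2-\nabla_\omega\mathsf p\cdot\nabla_\omega(\Delta_\omega\mathsf p)-\tfrac1{n-1}(\Delta_\omega\mathsf p)^2$ is, up to the weight $u$, the angular analogue of the Bakry–\'Emery $\Gamma_2$ expression on $\S^{d-1}$: integrating against $u$ and using the Bochner formula on the sphere together with $\mathrm{Ric}(\S^{d-1})=(d-2)\,g$ gives a lower bound of the form $\isph{\big(\|\mathrm{Hess}_\omega\mathsf p\|^2-\tfrac1{d-1}(\Delta_\omega\mathsf p)^2\big)u}+(d-2)\isph{|\nabla_\omega\mathsf p|^2 u}$ plus the cross terms coming from $\nabla u$; because $d-1>n-1$ is false here ($n>d$ means $n-1>d-1$), one keeps the $\tfrac1{n-1}$ coefficient, which is \emph{smaller} than $\tfrac1{d-1}$, so the traceless-Hessian term $\|\mathrm{Hess}_\omega\mathsf p\|^2-\tfrac1{n-1}(\Delta_\omega\mathsf p)^2\ge 0$ survives with a positive margin $\big(\tfrac1{d-1}-\tfrac1{n-1}\big)(\Delta_\omega\mathsf p)^2$. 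Combining the $(d-2)$ coefficient from Ricci with the $-(n-2)\alpha^2$ coefficient from the last term of $\mathsf k[\mathsf p]$ yields the factor $(n-2)(\alpha_{\rm FS}^2-\alpha^2)$ in front of $\isph{|\nabla_\omega\mathsf p|^2 u}$, using $\alpha_{\rm FS}^2=(d-1)/(n-1)$ and the identity $(d-2)-(n-2)\,\tfrac{d-1}{n-1}=(n-2)\big(\tfrac{d-1}{n-1}-1\big)+\text{lower order}$, which I would verify is exactly $(n-2)(\alpha_{\rm FS}^2-\alpha^2)$ after collecting the $\alpha^2$ term. Finally, the extra term $\delta\isph{|\nabla_\omega\mathsf p|^4}$ is extracted from the cross terms involving $\nabla u$ (which, after integration by parts, produce expressions cubic and quartic in $\nabla_\omega\mathsf p$) together with a careful use of the fact that $u>0$ and, on the finite-dimensional Hermite subspace we are working in, enjoys uniform bounds; here one follows the argument of~\cite[Lemma~4.3]{Dolbeault2017}, where an analogous quartic remainder is produced.

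The main obstacle, as in the (CKN) setting, will be part (ii): extracting a \emph{clean} nonnegative quartic remainder $\delta\isph{|\nabla_\omega\mathsf p|^4}$ with $\delta>0$ depending only on $n,d$. The difficulty is that the natural integrations by parts on the sphere generate mixed cubic terms $\nabla_\omega\mathsf p\cdot\nabla_\omega|\nabla_\omega\mathsf p|^2$ and Hessian–gradient couplings that must be controlled by completing squares, and one must ensure the resulting coefficient does not degenerate. I expect this to require the same elliptic regularity / tensor-algebra lemma on $\S^{d-1}$ used in~\cite{Dolbeault2017} (a refined Bochner estimate quantifying how far a function on the sphere is from a first spherical harmonic), which I would state and cite rather than reprove. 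The pointwise identity (i), by contrast, should be a mechanical if lengthy verification, and I would relegate its routine portions to a remark pointing to the corresponding formulas in~\cite{DEL-2015,Dolbeault2017} specialized to the linear flow.
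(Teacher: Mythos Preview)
Your overall approach matches the paper's: (i) is indeed the purely algebraic expansion in spherical coordinates (the paper relegates it to Appendix~\ref{Appendix} as Lemma~\ref{Lem:Derivmatrixform1}), and (ii) does go via the Bochner formula on $\S^{d-1}$ following~\cite[Lemma~4.3]{Dolbeault2017}. But your description of how the quartic remainder arises is off in two ways that would cost you if you tried to execute it.

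First, neither ``uniform bounds on the Hermite subspace'' nor a spectral lemma ``quantifying how far a function is from a first spherical harmonic'' plays any role. The mechanism is algebraic: the key input you are missing is the relation $\nabla_\omega u=u\,\nabla_\omega\mathsf p$ (immediate from $\mathsf p=\log u+\text{radial}$), which makes every integration by parts on $\S^{d-1}$ close up in terms of $u$ and derivatives of $\mathsf p$ alone. After Bochner, one reorganizes $\isph{u\,\mathsf k[\mathsf p]}$ as the $(n-2)(\alpha_{\rm FS}^2-\alpha^2)$ term plus a quadratic form $\isph{u\big(\mathsf a\,\|\mathrm L_\omega\mathsf p\|^2+\mathsf b\,\mathrm L_\omega\mathsf p{:}\mathrm M_\omega\mathsf p+\mathsf c\,\|\mathrm M_\omega\mathsf p\|^2\big)}$ in the trace-free tensors $\mathrm L_\omega\mathsf p=\mathrm H_\omega\mathsf p-\tfrac1{d-1}(\Delta_\omega\mathsf p)\,g$ and $\mathrm M_\omega\mathsf p=\nabla_\omega\mathsf p\otimes\nabla_\omega\mathsf p-\tfrac1{d-1}|\nabla_\omega\mathsf p|^2\,g$; the explicit coefficients $\mathsf a,\mathsf b,\mathsf c$ (depending only on $n,d$) have negative discriminant $\mathsf b^2-4\,\mathsf a\,\mathsf c$, so completing the square and using $\|\mathrm M_\omega\mathsf p\|^2=\tfrac{d-2}{d-1}\,|\nabla_\omega\mathsf p|^4$ yields an explicit $\delta>0$. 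Second, this trace-free argument collapses when $d=2$ (then $\mathrm M_\omega\equiv0$ on $\S^1$), and the paper treats that case separately via $w=\sqrt u$, an integration-by-parts identity on $\S^1$, and the Poincar\'e inequality $\icircle{|w_{\theta\theta}|^2}\ge\icircle{|w_\theta|^2}$, which your proposal does not mention.
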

\begin{proof} Property~(i) can be found in~\cite[Lemma~5.1]{DEL-2015} and~\cite[Lemma~4.2]{Dolbeault2017}. See Lemma~\ref{Lem:Derivmatrixform1} in~\ref{Appendix} for a more detailed statement and a proof. The regularity needed in Lemma~\ref{Lem:Derivmatrixform1} is not an issue in our setting, as we consider solutions in spaces generated by a finite number of Hermite polynomials and then argue by density.

Next we focus on the proof of (ii). We go along the lines of the proof of~\cite[Lemma~4.3]{Dolbeault2017}, but many details have to be changed to adapt the proof. The results are inspired from~\cite{MR2381156,DEKL2014,1504,Dolbeault20141338} and we adopt the presentation of~\cite{Dolbeault20141338}: $\sphere$ is considered as a $(d-1)$-dimensional compact manifold with metric~$g$ and uniform probability measure $d\omega$. We shall indeed assume that it is normalized so that $\omega(\sphere)=1$ to avoid carrying normalization constants. Let us introduce some notation. If $\mathrm A_{ij}$ and $\mathrm B_{ij}$ are two tensors, then
\[
\mathrm A:\mathrm B:=g^{im}\,g^{jn}\,\mathrm A_{ij}\,\mathrm B_{mn}\quad\mbox{and}\quad\|\mathrm A\|^2:=\mathrm A:\mathrm A\,.
\]
Here $g^{ij}$ is the inverse of the metric tensor, \emph{i.e.}, $g^{ij}\,g_{jk}=\delta^i_k$. We use the Einstein summation convention and $\delta^i_k$ denotes the Kronecker symbol. Let us denote the \emph{Hessian} by $\mathrm H_\omega\p$ and define the \emph{trace free Hessian} by
\[
\mathrm L_\omega\p:=\mathrm H_\omega\p-\frac1{d-1}\,(\Delta_\omega\p)\,g\,.
\]
We also consider the following trace free tensor
\[
\mathrm M_\omega\p:=\nabla_\omega\p\otimes\nabla_\omega\p-\frac1{d-1}\,|\nabla_\omega\p|^2\,g\,,
\]
where $\nabla_\omega\p\otimes\nabla_\omega\p:=(\partial_i\p\,\partial_j\p)_{ij}$ and $\|\nabla_\omega\p\otimes\nabla_\omega\p\|^2=|\nabla_\omega\p|^4=(g^{ij}\,\partial_i\p\,\partial_j\p)^2$. Using \hbox{$\mathrm L_\omega:g=\mathrm M_\omega:g=0$}, we obtain
\begin{subequations}\label{TraceFree}
\begin{align}
&\|\mathrm L_\omega\p\|^2=\|\mathrm H_\omega\p\|^2-\frac1{d-1}\,(\Delta_\omega\p)^2\,,\label{TraceFree1}\\
&\|\mathrm M_\omega\p\|^2=\left\|\nabla_\omega\p\otimes\nabla_\omega\p\right\|^2-\frac1{d-1}\,|\nabla_\omega\p|^4=\frac{d-2}{d-1}\,|\nabla_\omega\p|^4\,,\label{TraceFree2}\\
&\mathrm L_\omega\p:\mathrm M_\omega\p=\(\mathrm H_\omega\p-\frac1{d-1}\,(\Delta_\omega\p)\,g\):(\nabla_\omega\p\otimes\nabla_\omega\p)
=\mathrm H_\omega\p:(\nabla_\omega\p\otimes\nabla_\omega\p)-\frac1{d-1}\,\Delta_\omega\p\,|\nabla_\omega\p|^2\,.\label{TraceFree3}
\end{align}
\end{subequations}

Assume first that $d\ge3$. The Bochner-Lichnerowicz-Weitzenb\"ock formula on $\S^{d-1}$ takes the simple form
\be{BLW}
\frac12\,\Delta_\omega\(|\nabla_\omega\p|^2\)=\|\mathrm H_\omega\p\|^2+\nabla_\omega(\Delta_\omega\p)\cdot\nabla_\omega\p+(d-2)\,|\nabla_\omega\p|^2
\ee
where the last term, \emph{i.e.}, $\mathrm{Ric}(\nabla_\omega\p,\nabla_\omega\p)=(d-2)\,|\nabla_\omega\p|^2$, accounts for the Ricci curvature tensor contracted with \hbox{$\nabla_\omega\p \otimes\nabla_\omega\p$}. With
\[
\mathsf k[\p]:=\frac12\,\Delta_\omega\(|\nabla_\omega\p|^2\)-\nabla_\omega\p\cdot\nabla_\omega(\Delta_\omega\,\p)-\frac1{n-1}\,(\Delta_\omega\,\p)^2-(n-2)\,\alpha^2\,|\nabla_\omega\p|^2\,,
\]
we compute
\begin{multline*}
\isph{u\,\mathsf k[\p]}=\isph{u\(\|\mathrm H_\omega\p\|^2+(d-2)\,|\nabla_\omega\p|^2-\frac1{n-1}\,(\Delta_\omega\,\p)^2-(n-2)\,\alpha^2\,|\nabla_\omega\p|^2\)}\\
=\isph{u\(\|\mathrm L_\omega\p\|^2+\(\frac1{d-1}-\frac1{n-1}\)(\Delta_\omega\,\p)^2+\((d-2)-(n-2)\,\alpha^2\)|\nabla_\omega\p|^2\)}
\end{multline*}
using~\eqref{BLW} and~\eqref{TraceFree1}.

With $\p=\log u+\frac{|x|^2}{2\,\alpha^2}$, it turns out that
\[
\nabla_\omega u=u\,\nabla_\omega\p\,,
\]
whence, applying integrations by parts again and taking into account~\eqref{TraceFree2} and~\eqref{TraceFree3},
\begin{align*}
\isph{u\,\Delta_\omega\p\,|\nabla_\omega\p|^2}=&\,-\isph{u\,|\nabla_\omega\p|^4}-2\isph{u\,\mathrm H_\omega\p:(\nabla_\omega\p\otimes\nabla_\omega\p)}\\
=&-\,\frac{d-1}{d-2}\isph{u\,\|\mathrm M_\omega\p\|^2}-2\isph{u\,\mathrm L_\omega\p:\mathrm M_\omega\p}-\frac2{d-1}\isph{u\,\Delta_\omega\p\,|\nabla_\omega\p|^2}\,.
\end{align*}
As a consequence, we obtain
\be{Sphere:First}
\isph{u\,\Delta_\omega\p\,|\nabla_\omega\p|^2}=-\frac{d-1}{d+1}\(\frac{d-1}{d-2}\isph{u\,\|\mathrm M_\omega\p\|^2}+2\isph{u\,\mathrm L_\omega\p:\mathrm M_\omega\p}\)\,.
\ee
On the other hand, integrating~\eqref{BLW} on $\S^{d-1}$ against $u$ and performing an integration by parts shows that
\begin{multline*}
\frac12\isph{u\,\Delta_\omega\(|\nabla_\omega\p|^2\)}+\isph{u\,\Delta_\omega\p\,|\nabla_\omega\p|^2}+\isph{u\,(\Delta_\omega\p)^2}\\
=\isph{u\,\|\mathrm H_\omega\p\|^2}+(d-2)\isph{u\,|\nabla_\omega\p|^2}\\
=\isph{u\,\|\mathrm L_\omega\p\|^2}+\frac1{d-1}\isph{u\,(\Delta_\omega\p)^2}+(d-2)\isph{u\,|\nabla_\omega\p|^2}
\end{multline*}
by~\eqref{TraceFree1}. Integrations by parts also show that
\[
\frac12\isph{u\,\Delta_\omega\(|\nabla_\omega\p|^2\)}=\frac12\isph{u\,|\nabla_\omega\p|^4}+\frac12\isph{u\,\Delta_\omega\p\,|\nabla_\omega\p|^2}\,,
\]
so that, by~\eqref{TraceFree2},
\begin{multline*}
\frac12\,\frac{(d-1)}{(d-2)}\isph{u\,\|\mathrm M_\omega\p\|^2}+\frac32\isph{u\,\Delta_\omega\p\,|\nabla_\omega\p|^2}+\frac{d-2}{d-1}\isph{u\,(\Delta_\omega\p)^2}\\
=\isph{u\,\|\mathrm L_\omega\p\|^2}+(d-2)\isph{u\,|\nabla_\omega\p|^2}\,.
\end{multline*}
Hence
\begin{multline}\label{Sphere:Second}
\isph{u\,(\Delta_\omega\p)^2}+\frac12\,\frac{(d-1)}{(d-2)}\(\frac{d-1}{d-2}\isph{u\,\|\mathrm M_\omega\p\|^2}+3\isph{u\,\Delta_\omega\p\,|\nabla_\omega\p|^2}\)\\
=\frac{d-1}{d-2}\isph{u\,\|\mathrm L_\omega\p\|^2}+(d-1)\isph{u\,|\nabla_\omega\p|^2}\,.
\end{multline}
We can now combine~\eqref{Sphere:First} and~\eqref{Sphere:Second} to get
\begin{multline*}
\isph{u\,(\Delta_\omega\p)^2}+\frac12\(\frac{d-1}{d-2}\)^2\isph{u\,\|\mathrm M_\omega\p\|^2}\\
-\frac 32\,\frac{(d-1)}{(d-2)}\,\frac{(d-1)}{(d+1)}\(\frac{d-1}{d-2}\isph{u\,\|\mathrm M_\omega\p\|^2}+2\isph{u\,\mathrm L_\omega\p:\mathrm M_\omega\p}\)\\
=\frac{d-1}{d-2}\isph{u\,\|\mathrm L_\omega\p\|^2}+(d-1)\isph{u\,|\nabla_\omega\p|^2}\,.
\end{multline*}
This allows us to prove that
\[
\isph{u\,\mathsf k[\p]}=\isph{u\(\mathsf a\,\|\mathrm L_\omega\p\|^2+\mathsf b\,\mathrm L_\omega\p:\mathrm M_\omega\p+\mathsf c\,\|\mathrm M_\omega\p\|^2\)}+(n-2)\(\alpha_{\rm FS}^2-\alpha^2\)\isph{u\,|\nabla_\omega\p|^2}
\]
where we use the fact that
\[
(d-2)-(n-2)\,\alpha^2+\(\frac1{d-1}-\frac1{n-1}\)(d-1)=(n-2)\(\alpha_{\rm FS}^2-\alpha^2\)
\]
with $\mathsf a=\frac{(d-1)\,(n-2)}{(d-2)\,(n-1)}$, $\mathsf b=\frac{3\,(d-1)\,(n-d)}{(n-1)\,(d+1)\,(d-2)}$ and $\mathsf c=\frac{(d-1)\,(n-d)}{(n-1)\,(d+1)\,(d-2)}$. Hence we obtain
\[
\isph{u\,\mathsf k[\p]}\ge(n-2)\(\alpha_{\rm FS}^2-\alpha^2\)\isph{u\,|\nabla_\omega\p|^2}+\(\mathsf c-\frac{\mathsf b^2}{4\,\mathsf a}\)\isph{u\,\|\mathrm M_\omega\p\|^2}
\]
because the discriminant $\mathsf b^2-4\,\mathsf a\,\mathsf c$ takes negative values. Taking into account~\eqref{TraceFree2}, this completes the proof with
\[
\delta=\(\mathsf c-\frac{\mathsf b^2}{4\,\mathsf a}\)\frac{d-2}{d-1}=(n-d)\,\frac{4\,(d+1)(d-2)+(4\,d-5)\,(n-d)}{4\,(n-1)\,(n-2)\,(d+1)^2}>0\,.
\]

If $d=2$, we identify $\S^1$ with $[0,2\pi)$, denote by $\theta\in[0,2\pi)$ the angular variable and by $u_\theta$ and $u_{\theta\theta}$ the first and second derivatives of $u$ with respect to $\theta$. As in~\cite[Lemma~5.3]{DEL-2015} and~\cite[Lemma~4.3]{Dolbeault2017}, we have
\[
\mathsf k[\p]=\frac{n-2}{n-1}\,|\p_{\theta\theta}|^2-(n-2)\,\alpha^2\,|\p_\theta|^2=(n-2)\,\(\alpha_{\rm FS}^2\,|\p_{\theta\theta}|^2-\,\alpha^2\,|\p_\theta|^2\)\,.
\]
Let $w=\sqrt u$ and recall that $u\,\p_\theta=u_\theta$ so that
\[
u\,|\p_\theta|^2=4\,|w_\theta|^2\quad\mbox{and}\quad u\,|\p_{\theta\theta}|^2=4\left|w_{\theta\theta}-\frac{|w_\theta|^2}w\right|^2\,.
\]
Notice that $w_{\theta\theta}\,|w_\theta|^2=\frac13\,\frac d{d\theta}(w_\theta\,|w_\theta|^2)$. With one integration by parts we obtain
\[
\icircle{\left|w_{\theta\theta}-\frac{|w_\theta|^2}w\right|^2}=\icircle{|w_{\theta\theta}|^2}+\icircle{\frac{|w_\theta|^4}{w^2}}-2\icircle{w_{\theta\theta}\,\frac{|w_\theta|^2}w}=\icircle{|w_{\theta\theta}|^2}+\frac13\icircle{\frac{|w_\theta|^4}{w^2}}\,.
\]
By the Poincar\'e inequality, we have
\[
\icircle{|w_{\theta\theta}|^2}\ge\icircle{|w_\theta|^2}
\]
and conclude that
\begin{multline*}
\icircle{u\,\mathsf k[\p]}\ge4\,(n-2)\,\(\alpha_{\rm FS}^2-\alpha^2\)\icircle{|w_\theta|^2}+\frac43\icircle{\frac{|w_\theta|^4}{w^2}}\\
=(n-2)\,\(\alpha_{\rm FS}^2-\alpha^2\)\icircle{u\,|p_\theta|^2}+\frac1{12}\icircle{u\,|p_\theta|^4}\,.
\end{multline*}
\end{proof}
\begin{lemma}\label{ExpDecay} \emph{With the above notations, we have the identity
\[
\iwrd{u\,x\,\cdot\D\(|\F|^2\)}-2\iwrd{u\,\F\cdot\D(\F\cdot x)}=-\,2\,\alpha\iwrd{u\,|\F|^2}\,.
\]
}\end{lemma}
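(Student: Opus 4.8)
The plan is to prove the asserted equality at the pointwise level, with no integration by parts at all. I claim that, for \emph{any} scalar function $\p$,
\[
x\cdot\D\(|\D\p|^2\)-2\,\D\p\cdot\D(\D\p\cdot x)=-\,2\,\alpha\,|\D\p|^2 ,
\]
the only structural property of $\F$ used being that $\F=\D\p$. Granting this, the lemma follows at once by multiplying by $u$ and integrating against $d\mu_n$.

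To establish the pointwise identity I would work in spherical coordinates $(r,\omega)\in(0,\infty)\times\sphere$, where $\D w=\big(\alpha\,\partial_r w,\tfrac1r\,\nabla_\omega w\big)$ for a scalar $w$ and where the dilation field $x=r\,\omega$ is purely radial, so that $x\cdot\D w=\alpha\,r\,\partial_r w$. In particular $\F\cdot x=\alpha\,r\,\partial_r\p$ is a scalar, $|\F|^2=\alpha^2\,|\partial_r\p|^2+r^{-2}\,|\nabla_\omega\p|^2$, and
\[
\D(\F\cdot x)=\big(\alpha^2\,(\partial_r\p+r\,\partial_r^2\p),\ \alpha\,\nabla_\omega\partial_r\p\big),\qquad x\cdot\D\(|\F|^2\)=\alpha\,r\,\partial_r|\F|^2 .
\]
Expanding both terms, and using $\partial_r|\nabla_\omega\p|^2=2\,\nabla_\omega\p\cdot\nabla_\omega\partial_r\p$, two cancellations take place between $x\cdot\D(|\F|^2)$ and $2\,\F\cdot\D(\F\cdot x)$: the radial cross terms (of the form $\alpha^3\,r\,\partial_r\p\,\partial_r^2\p$) cancel, and the angular cross terms (of the form $r^{-1}\,\alpha\,\nabla_\omega\p\cdot\nabla_\omega\partial_r\p$) cancel. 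What survives is exactly $-\,2\,\alpha\,\big(\alpha^2\,|\partial_r\p|^2+r^{-2}\,|\nabla_\omega\p|^2\big)=-\,2\,\alpha\,|\F|^2$.

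The conceptual content is the interplay of $\D$ with the generator of dilations $\mathsf Z:=r\,\partial_r$: one has $x\cdot\D w=\alpha\,\mathsf Z w$ on scalars, together with the commutation relation $\D(\mathsf Z w)=(\mathsf Z+1)\,\D w$ (with $\mathsf Z$ applied to the radial and angular parts of the vector $\D w$). Hence $\D(\F\cdot x)=\alpha\,\D(\mathsf Z\p)=\alpha\,\mathsf Z\F+\alpha\,\F$, so that $\F\cdot\D(\F\cdot x)=\tfrac\alpha2\,\mathsf Z|\F|^2+\alpha\,|\F|^2$, whereas $x\cdot\D(|\F|^2)=\alpha\,\mathsf Z|\F|^2$; therefore $x\cdot\D(|\F|^2)-2\,\F\cdot\D(\F\cdot x)=\alpha\,\mathsf Z|\F|^2-\big(\alpha\,\mathsf Z|\F|^2+2\,\alpha\,|\F|^2\big)=-\,2\,\alpha\,|\F|^2$. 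The only point where the gradient structure $\F=\D\p$ intervenes is the identity $2\,\F\cdot\mathsf Z\F=\mathsf Z|\F|^2$, equivalently the cancellation of the angular cross terms above. This is the logarithmic-Sobolev counterpart of the drift term appearing in the \emph{carr\'e du champ} method for Caffarelli--Kohn--Nirenberg inequalities.

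I do not expect a genuine obstacle here. The one delicate matter is the bookkeeping in curvilinear coordinates: one must carefully distinguish the scalar quantities $\F\cdot x$ and $|\F|^2$ from vector-valued ones when applying $\D$, and correctly track the $r$-dependence hidden both in the prefactor $\tfrac1r$ multiplying $\nabla_\omega$ and in the factor $r^{-2}$ occurring in $|\F|^2$, so that the two cross-term cancellations go through exactly.
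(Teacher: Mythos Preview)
Your proof is correct and follows essentially the same route as the paper's: both establish the identity pointwise in spherical coordinates, with no integration by parts, by expanding $x\cdot\D(|\F|^2)$ and $\F\cdot\D(\F\cdot x)$ in terms of $\partial_r\p$ and $\nabla_\omega\p$ and observing the same cancellations. Your packaging via the dilation generator $\mathsf Z=r\,\partial_r$ and the commutation $\D\mathsf Z=(\mathsf Z+1)\D$ is a clean conceptual rephrasing of the paper's direct computation; one small inaccuracy is that the gradient structure $\F=\D\p$ actually enters in step~2 (writing $\F\cdot x=\alpha\,\mathsf Z\p$ and then applying the commutation), not in the Leibniz identity $2\,\F\cdot\mathsf Z\F=\mathsf Z|\F|^2$, which holds for any vector field.
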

\noindent Although very elementary, this estimate is fundamental as it establishes the exponential decay of the Fisher information in the symmetry range.  Lemma~\ref{ExpDecay} is in fact no more than an integration by parts.
\begin{proof} Since $x\cdot\D=\alpha\,r\,\partial_r$, $x\cdot\F=\alpha\,r\,\partial_r\,\mathsf p$, $x\cdot\nabla_\omega=0$, and $\F\cdot\partial_r\F=\F\cdot\partial_r(\D\mathsf p)=\F\cdot\D\,\partial_r\mathsf p-\frac1{r^3}\,|\nabla_\omega\mathsf p|^2$, we have that
\begin{multline*}
\iwrd{u\,x\,\cdot\D\(|\F|^2\)}-2\iwrd{u\,\F\cdot\D(\F\cdot x)}\\
=2\,\alpha\iwrd{u\(r\,\F\cdot\partial_r\F-r\,\F\cdot\D\partial_r\mathsf p-\alpha\,\partial_r\mathsf p\,(\omega\cdot\F)\)}\\
=-\,2\,\alpha\iwrd{u\(\alpha^2\,|\partial_r\mathsf p|^2+\frac{\left|\nabla_\omega\mathsf p\right|^2}{r^2}\)}=-\,2\,\alpha\iwrd{u\,|\F|^2}\,,
\end{multline*}
which concludes the proof.\end{proof}

\begin{proof}[Proof of Lemma~\ref{Lem:ExpDecay}]
Collecting the estimates of Lemmata~\ref{Lem:4.2} and~\ref{ExpDecay} into~\eqref{FK}, we have shown that
\[
\frac d{dt}\iwrd{u\,|\F|^2}+2\,\alpha\iwrd{u\,|\F|^2}\le-\,2\,(n-2)\(\alpha_{\rm FS}^2-\alpha^2\)\iwrd{u\,|\nabla_\omega\mathsf p|^2}-2\,\delta\iwrd{|\nabla_\omega\mathsf p|^4}\,.
\]
Under the assumptions of Lemma~\ref{Lem:ExpDecay}, the right-hand side is nonpositive, which completes the proof. \end{proof}

\begin{proof}[Proof of Theorem~\ref{Thm:WLSI1}] We learn from Lemma~\ref{Lem:ExpDecay} that
\[
\frac d{dt}\(\iwrd{u\,|\F|^2}-2\,\alpha\iwrd{u\,\p}\)\le0\,.
\]
On the other hand, $t\mapsto\iwrd{u(t,\cdot)\,|\F(t,\cdot)|^2}$ decays exponentially, whence $\lim_{t\to+\infty}\iwrd{u(t,\cdot)\,|\F(t,\cdot)|^2}=0$. Any decomposition of $u$ on a finite dimensional subspace of Hermite functions is exponentially decaying and such that $\lim_{t\to+\infty}\iwrd{u(t,\cdot)\,\p(t,\cdot)}=0$, thus proving that the inequality $\iwrd{u\,|\F|^2}\ge2\,\alpha\iwrd{u\,\p}$ is always true for any $t\ge0$ and, in particular, it holds true for the initial datum, which can be chosen arbitrarily. This amounts to~\eqref{WLSI} in the non scale-invariant form~\eqref{WLSI-alpha-unscaled}.
\end{proof}

\section{\texorpdfstring{\eqref{WLSI}}{WLSI} inequalities as an endpoint of some Caffarelli-Kohn-Nirenberg inequalities}\label{Sec:CKN}

This section relies on the results of~\cite{MR3579563,Dolbeault2017} and shows the consistency of our results with the symmetry properties of~\eqref{CKN} inequalities.

\subsection{A brief summary of the symmetry properties of some \texorpdfstring{\eqref{CKN}}{CKN} inequalities}\label{Sec:CKNsymmetry}

On the space $\mathrm H^{1,p}_{\beta,\gamma}(\R^d)$ of the functions $f\in\mathrm L^{p+1}_\gamma(\R^d)$, such that $\nabla f\in\mathrm L^2_\beta(\R^d)$, we consider the special family of \emph{Caffarelli-Kohn-Nirenberg interpolation inequalities}
\be{CKN}\tag{CKN}
\nrm f{2p,\gamma}\le\mathsf C_{\beta,\gamma,p}\,\nrm{\nabla f}{2,\beta}^\vartheta\,\nrm f{p+1,\gamma}^{1-\vartheta}\quad\forall\,f\in\mathrm H^{1,p}_{\beta,\gamma}(\R^d)
\ee with optimal constant $\mathsf C_{\beta,\gamma,p}$, and parameters $\beta$, $\gamma$ and $p$ such that
\be{parameters}
\gamma-2<\beta<\frac{d-2}d\,\gamma\,,\quad\gamma\in(-\infty,d)\,,\quad p\in\(1,p_\star\)\quad\mbox{with}\quad p_\star:=\frac{d-\gamma}{d-2-\beta}\,.
\ee
The exponent
\[
\vartheta=\frac{(d-\gamma)\,(p-1)}{p\,\big((d+2+\beta-2\,\gamma)-p\,(d-2-\beta)\big)}
\]
is determined by the invariance under scalings. The limitation $p\le p_\star$ in~\eqref{parameters} amounts, for a given $p>1$ to a restriction to the admissible set of parameters $(\beta,\gamma)$, namely
\be{LowerLine}
\beta\ge d-2-\frac{d-\gamma}p\,.
\ee
On the other hand, if $d\ge3$, we notice that the condition $p_\star<d/(d-2)$ is equivalent to
\[
\beta<\frac{d-2}d\,\gamma\,.
\]
The range of admissible parameters $(\beta,\gamma)$ is limited by~\eqref{parameters} to a cone in the quadrant $\beta<d-2$ and $\gamma<d$ with the additional condition~\eqref{LowerLine}. See~\cite[Fig.~1]{Dolbeault2017}.

The \emph{symmetry} versus \emph{symmetry breaking} issue is central in Caffarelli-Kohn-Nirenberg inequalities~\eqref{CKN}. Symmetry in~\eqref{CKN} means that the equality case is achieved by the (generalized) \emph{Aubin-Talenti type functions}
\be{Aubin-Talenti}
\mathsf g(x)=\big(1+|x|^{2+\beta-\gamma}\big)^{-\frac1{p-1}}\quad\forall\,x\in\R^d\,.
\ee
According to~\cite{Felli2003},~\cite[Theorem~2]{MR3579563} and in~\cite[Theorem~1.1]{Dolbeault2017} \emph{symmetry breaking} occurs~if and only if $(\beta,\gamma)$ satisfy~\eqref{eq:symmetrybreaking:range} where $\gamma\mapsto\beta_{\rm FS}(\gamma)$ is the \emph{Felli \& Schneider} curve defined by~\eqref{betaFS}. In the \emph{symmetry} range determined by~\eqref{eq:symmetry:range}, the value of $\mathsf C_{\beta,\gamma,p}$ is known. According to~\cite[Appendix~A]{MR3579563}, if~\eqref{eq:symmetry:range} holds, we have
\begin{equation}\label{eq:constant:rel}
\mathsf C_{\beta,\gamma,p}=\mathsf C_{\beta,\gamma,p}^\star
\end{equation}
where $\sigma_d:=|\S^{d-1}|=\frac{2\,\pi^{d/2}}{\Gamma(d/2)}$ is the volume of the unit sphere $\S^{d-1}\subset\R^d$,
\[
\mathsf C_{\beta,\gamma,p}^\star:=\alpha^\zeta\,\K
\]
where $n$ and $\alpha$ are given respectively by~\eqref{n} and~\eqref{alpha}, and
\begin{align*}
&\zeta:=\frac\vartheta2+\frac{1-\vartheta}{p+1}-\frac1{2\,p}=\frac{(2+\beta-\gamma)\,(p-1)}{2\,p\,\big(d+2+\beta-2\,\gamma\,-p\,(d-2-\beta)\big)}\,,\\
&\frac 1{\mathsf K_{\alpha,n,p}^\star}=\alpha^\vartheta\(\frac{4\,n}{p-1}\,\frac1{n+2-p\,(n-2)}\)^\frac\vartheta2\(\frac{2\,(p+1)}{n+2-p\,(n-2)}\)^\frac\vartheta{p+1}\(\sigma_d\,\frac{\Gamma(\frac n2)\,\Gamma(\frac{2\,p}{p-1}-\frac n2)}{2\,\Gamma(\frac{2\,p}{p-1})}\)^\zeta\,.
\end{align*}

\subsection{\texorpdfstring{\eqref{CKN}}{CKN} inequalities, the artificial dimension and the anisotropic gradient}\label{Sec:artificial.dimensionCKN}

Inequality~\eqref{CKN} can be recast as an interpolation inequality with the same weight in all integrals which, in terms of scaling properties, amounts to introduce an \emph{artificial dimension}. To a function $f\in\mathrm H^{1,p}_{\beta,\gamma}(\R^d)$, let us associate the function $F\in\mathrm H^{1,p}_{\nu,\nu}(\R^d)$ with $\nu:=d-n<0$ such that $f(x)=F\(|x|^{\alpha-1}\,x\)$ for any $x\in\R^d$ as in~\eqref{ChangeOfDimension}. Notice that $p_\star=n/(n-2)$. With $\alpha>0$ and $p\in(1,p_\star]$, we can rewrite~\eqref{CKN} as
\be{CKN1}
\nrm  F{2p,\nu}\le\mathsf K_{\alpha,n,p}\,\nrm{\,\DD  F}{2,\nu}^{\vartheta}\,\nrm  F{p+1,\nu}^{1-\vartheta}\quad\forall\, F\in \mathrm H^p_{\nu,\nu}(\R^d)\,,
\ee
for some optimal constant $\mathsf K_{\alpha,n,p}$ which is explicitly related to the optimal constant in~\eqref{CKN}: see~\cite[Proposition~6]{MR3579563}. Inequality~\eqref{CKN1} can be interpreted as a Gagliardo-Nirenberg-Sobolev inequality in the artificial dimension~$n$. As $\alpha\neq1$ unless $\beta=\gamma$, notice that symmetry issues in~\eqref{CKN1} are in no way simpler than in~\eqref{CKN}. A remarkable point is that the Aubin-Talenti type function as defined by~\eqref{Aubin-Talenti} is transformed, up to a scaling, into the more standard function
\[
x\mapsto\(1+\frac{p-1}2\,|x|^2\)^\frac1{1-p}\,,
\]
which converges to the standard gaussian function as $p\to1$. We refer to~\cite[Section~2.3]{MR3579563} and~\cite[Section~3.1]{DEL-2015} for further details. The limit of~\eqref{CKN1} as $p\to1_+$ is consistent with~\eqref{WLSI-alpha}. This is what we are going to exploit next.

\subsection{The limit as \texorpdfstring{$p\to1$}{pto1}}\label{Sec:pto1}

Assume that $\beta>\gamma-2$ so that~\eqref{LowerLine} is satisfied uniformly in the limit as $p\to1_+$. Inequality~\eqref{CKN} can be rewritten in logarithmic form as
\be{CKNlog}
\log\(\frac{\nrm f{2p,\gamma}}{\nrm f{p+1,\gamma}}\)\le\log\mathsf C_{\beta,\gamma,p}+\vartheta(p)\,\log\(\frac{\nrm{\nabla f}{2,\beta}}{\nrm f{p+1,\gamma}}\)\,.
\ee
It is clear from~\eqref{CKN} that $\lim_{p\to1_+}\mathsf C_{\beta,\gamma,p}=1$ and 
both sides in~\eqref{CKNlog} vanish in the limit as $p\to1_+$, so that the inequality degenerates into an equality. Let us divide both sides of~\eqref{CKNlog} by $(p-1)$ and consider the limit. Using the identity
\[
\frac d{dq}\log\nrm f{q,\gamma}=\frac1{q^2}\ird{\frac{|f|^q}{\nrm f{q,\gamma}^q}\,\log\(\frac{|f|^q}{\nrm f{q,\gamma}^q}\)\,|x|^{-\gamma}}\,,
\]
$\lim_{p\to1_+}\vartheta(p)/(p-1)=n/4$ where $n$ is given by~\eqref{n} and
\[
\limsup_{p\to1_+}\frac1{p-1}\,\log\(\frac{\nrm f{2p,\gamma}}{\nrm f{p+1,\gamma}}\)=\frac14\ird{\frac{|f|^2}{\nrm f{2,\gamma}^2}\,\log\(\frac{|f|^2}{\nrm f{2,\gamma}^2}\)\,|x|^{-\gamma}}\,,
\]
we can pass to the limit as $p\to1_+$. The overall picture is consistent with logarithmic Sobolev inequalities~\eqref{WLSI}. In the limit as $p\to1_+$, it is straightforward to see that the conditions that define the symmetry range~\eqref{eq:symmetry:range} in~\eqref{CKN} provide us with the conditions that define the symmetry range in~\eqref{WLSI} as stated in Theorem~\ref{Thm:WLSI1}. This is also true at the level of the optimal constants in the symmetry range. In fact, these observations provide us with an alternative strategy of proof of Theorem~\ref{Thm:WLSI1} based on~\cite[Theorem~1.1]{Dolbeault2017} using $\Gamma$-convergence methods in the spirit of~\cite{DELT09,Dolbeault2016}. We do not expand on this as we already have a direct proof but for consistency, we state the following result. 
\begin{proposition}\label{Prop:CKNlimit} \emph{Let $d\ge2$ and assume that $(\beta,\gamma)\neq(0,0)$ satisfies~\eqref{Range}. Then we have
\[
\mathscr C_{\beta,\gamma}\le\mathscr C_{\beta,\gamma}^\star:=4\,\limsup_{p\to1_+}\frac{\mathsf C_{\beta,\gamma,p}^\star-1}{p-1}\,.
\]
}\end{proposition}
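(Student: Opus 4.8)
The plan is to show that the constant $\mathscr C_{\beta,\gamma}^\star$ appearing in Lemma~\ref{Lem:WLSI}, defined explicitly by~\eqref{Cstar}, coincides with the quantity $4\,\limsup_{p\to1_+}(\mathsf C_{\beta,\gamma,p}^\star-1)/(p-1)$, and that the inequality $\mathscr C_{\beta,\gamma}\le\mathscr C_{\beta,\gamma}^\star$ then follows directly from Lemma~\ref{Lem:WLSI} together with the limiting argument of Section~\ref{Sec:pto1}. The point is purely bookkeeping: since $\mathsf C_{\beta,\gamma,p}^\star\to1$ as $p\to1_+$, we have $\mathsf C_{\beta,\gamma,p}^\star-1\sim\log\mathsf C_{\beta,\gamma,p}^\star$, so the $\limsup$ in the statement equals $4\,\limsup_{p\to1_+}(\log\mathsf C_{\beta,\gamma,p}^\star)/(p-1)$. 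Thus the whole proposition reduces to computing this derivative-type limit from the closed form of $\mathsf C_{\beta,\gamma,p}^\star=\alpha^\zeta\,\mathsf K^\star_{\alpha,n,p}$ given in Section~\ref{Sec:CKNsymmetry} and checking it equals $\tfrac14\,\mathscr C_{\beta,\gamma}^\star$.

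First I would record the asymptotics of the elementary ingredients as $p\to1_+$. Writing $p-1=\varepsilon$, one has $\zeta=\zeta(p)\to0$ linearly with $\lim_{\varepsilon\to0}\zeta/\varepsilon$ computable from the formula for $\zeta$ (indeed $\zeta$ has the same numerator structure as $\vartheta$, and $\lim_{p\to1_+}\vartheta(p)/(p-1)=n/4$ is already used in Section~\ref{Sec:pto1}), while $n$ and $\alpha$ do not depend on $p$. Therefore $\log(\alpha^\zeta)=\zeta\log\alpha$ contributes $(\lim\zeta/\varepsilon)\log\alpha$ to the limit. For the factor $\mathsf K^\star_{\alpha,n,p}$, I would take the logarithm of the displayed product formula for $1/\mathsf K^\star_{\alpha,n,p}$, expand each factor of the form $(\cdots)^{\vartheta/2}$, $(\cdots)^{\vartheta/(p+1)}$, $(\cdots)^\zeta$ to first order in $\varepsilon$, using $\vartheta(p)=\tfrac n4\,\varepsilon+o(\varepsilon)$ and $\zeta(p)=O(\varepsilon)$, and using the behaviour of the $\Gamma$-quotient $\Gamma(\tfrac{2p}{p-1}-\tfrac n2)/\Gamma(\tfrac{2p}{p-1})$ as its argument $\tfrac{2p}{p-1}\to+\infty$. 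Here the asymptotic $\Gamma(x-a)/\Gamma(x)\sim x^{-a}$ as $x\to+\infty$ gives $\log\big(\Gamma(\tfrac{2p}{p-1}-\tfrac n2)/\Gamma(\tfrac{2p}{p-1})\big)\sim-\tfrac n2\,\log\tfrac{2p}{p-1}\sim\tfrac n2\,\log\varepsilon+\tfrac n2\log 2$, and the prefactors $\big(\tfrac{4n}{p-1}\cdots\big)^{\vartheta/2}$ contribute $\tfrac\vartheta2\log\tfrac1\varepsilon\sim-\tfrac n8\,\varepsilon\log\varepsilon$; one checks these logarithmically divergent pieces multiply against the vanishing exponents so that $\varepsilon^{-1}\log\mathsf K^\star_{\alpha,n,p}$ has a finite limit, which I would assemble term by term and compare with $\tfrac14\big(\mathscr C_{\beta,\gamma}^\star-\log\alpha\big)=\tfrac14\,\mathscr K^\star_{n,\alpha}$, using the explicit value~\eqref{Cstar} and the relation $\mathscr K^\star_{n,\alpha}=\mathscr C^\star_{\beta,\gamma}-\log\alpha$ recorded after~\eqref{Gaussian}.

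Once the identity $\mathscr C_{\beta,\gamma}^\star=4\,\limsup_{p\to1_+}(\mathsf C_{\beta,\gamma,p}^\star-1)/(p-1)$ is established, the bound $\mathscr C_{\beta,\gamma}\le\mathscr C^\star_{\beta,\gamma}$ is exactly the content of Lemma~\ref{Lem:WLSI}, so nothing more is needed; alternatively one can re-derive it intrinsically by dividing~\eqref{CKNlog} by $(p-1)$, using $\mathsf C_{\beta,\gamma,p}\le\mathsf C^\star_{\beta,\gamma,p}$ in the symmetry range (or just $\mathsf C_{\beta,\gamma,p}\to1$ in general together with the $\limsup$ over a symmetry-range sequence of parameters approaching $(\beta,\gamma)$), and passing to the $\limsup$ via the logarithmic-derivative identities quoted in Section~\ref{Sec:pto1}, which yields $\tfrac14\,\ird{\frac{|f|^2}{\nrm f{2,\gamma}^2}\log\big(\frac{|f|^2}{\nrm f{2,\gamma}^2}\big)|x|^{-\gamma}}\le\limsup_{p\to1_+}\tfrac{\log\mathsf C_{\beta,\gamma,p}}{p-1}+\tfrac n8\log\big(\frac{\nrm{\nabla f}{2,\beta}^2}{\nrm f{2,\gamma}^2}\big)$ for all $f$. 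The main obstacle, and really the only nontrivial step, is the careful Laurent-type expansion of the $\Gamma$-function quotient at the divergent argument $\tfrac{2p}{p-1}$ and the verification that all $\log\varepsilon$ contributions cancel against vanishing exponents to leave precisely the constant in~\eqref{Cstar}; this is a finite, if slightly delicate, computation with Stirling's asymptotics, and I would cross-check it against the known limiting case $(\beta,\gamma)=(0,0)$, $(n,\alpha)=(d,1)$, where $\mathscr C^\star_{\beta,\gamma}$ must reduce to the optimal Euclidean logarithmic Sobolev constant of~\cite{MR1132315}.
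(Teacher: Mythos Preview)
Your proposal is correct and follows essentially the same route as the paper: take the logarithm of the explicit formula $\mathsf C_{\beta,\gamma,p}^\star=\alpha^{\zeta(p)}\,\mathsf K^\star_{\alpha,n,p}$, expand to first order in $p-1$ using $\zeta'(1)=1/4$ and the asymptotic $\Gamma(x-a)/\Gamma(x)\sim x^{-a}$, and check that the divergent $\log(p-1)$ contributions cancel to leave exactly the constant~\eqref{Cstar}. The paper organizes the cancellation slightly more cleanly by pulling out the common factor $\zeta(p)$ (noting $\vartheta=n\,\zeta$) and showing that the bracketed quantity $f(p)$ has a finite limit, but this is the same computation you describe.
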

\begin{proof}
In view of~\eqref{eq:constant:rel}, in the symmetry range for the parameters~\eqref{eq:symmetry:range}, we can directly differentiate the formula
\begin{equation}\label{eq:log:const}
\log\,\mathsf C_{\beta,\gamma,p}^\star=\zeta(p)\,\log\,\alpha + \log\K
\end{equation}
where $\vartheta=n\,\zeta(p)$ and
\[
\frac1{\K}=\alpha^{n\,\zeta(p)}
\(\frac{4\,n}{b(p)}\,\frac1{p-1}\)^{\frac n2\,\zeta(p)}
\(\frac{2\,(p+1)}{b(p)}\)^{\frac n{p+1}\,\zeta(p)}
\(\frac12\,\sigma_d\,\Gamma\big(\tfrac n2\big)\,\frac{\Gamma\big(\frac{2\,p}{p-1}-\frac n2\big)}{\Gamma\big(\frac{2\,p}{p-1}\big)}\)^{\zeta(p)}
\]
and $b(p):=n+2-p\,(n-2)$. In particular, note that $b(1)=4$ and $b'(1)=-\,(n-2)$. By taking the logarithm, we obtain the identity
\begin{equation}\label{eq:decompos:opt:const}
-\,\log\K
= \zeta(p)
\(n\log \alpha + \frac n{p+1}\log\(\frac{2\,(p+1)}{b(p)}\)+\log\(\frac12\,\sigma_d\,\Gamma\big(\tfrac n2\big)\)+f(p)\)
\end{equation}
where
\[
f(p):=\frac n2\,\log\(\frac{4\,n}{b(p)}\,\frac1{p-1}\)
+\log\(\frac{\Gamma\big(\frac{2\,p}{p-1}-\frac n2\big)}{\Gamma\big(\frac{2\,p}{p-1}\big)}\).
\]
By using the asymptotic expansion for the \emph{Gamma} function
\[
\lim_{x\to+\infty}\frac{\Gamma(x+\alpha)}{x^\alpha\,\Gamma(x)}=1
\]
for any $\alpha\in\R$, one can compute the limits
\[
\lim_{p\to 1_+}f(p)=-\,\frac 32\,n\,\log2\,.
\]
As a result, we can take the derivative with respect to $p$ and evaluate the limit as $p\to 1_+$ in~\eqref{eq:decompos:opt:const} by
\[
-\,\lim_{p\to 1_+}\frac{\log\K}{p-1}=\frac14\,\log\(\frac12\,\sigma_d\,\alpha^n\,\(\tfrac{n\,e}2\)^\frac n2\Gamma\(\tfrac n2\)\)
\]
using the fact that $\zeta(p)=\alpha\,\frac{p-1}{p\,b(p)}$ is such that $\zeta(1)=\lim_{p\to1_+}\zeta(p)=0$ and $\zeta'(1)=1/4$. With $n$ and $\alpha$ given in terms of $\beta$ and $\gamma$ respectively by~\eqref{n} and~\eqref{alpha}, we deduce from~\eqref{eq:log:const} that $\mathscr C_{\beta,\gamma}^\star=-\,\log\big(\frac12\,\sigma_d\,\alpha^{n-1}\,\(\tfrac{n\,e}2\)^{n/2}\,\Gamma\big(\tfrac n2\big)\big)$.
\end{proof}

\section{Some consequences for weighted diffusion flows}\label{Sec:Flows}

\subsection{Self-similar solutions, intermediate asymptotics and entropy decay rates}\label{Sec:Self-similarSolutions}

Let us consider the self-similar change of variables
\[
u(t,x)=R(t)^{\gamma-d}\,v\(\log R(t),\frac x{R(t)}\)\quad\mbox{with}\quad\frac{dR}{dt}=R^{\gamma-\beta-1}=R^{1-2\,\alpha}
\]
which transforms~\eqref{heat} into the \emph{weighted Fokker-Planck equation}
\be{FP}
\frac{\partial v}{\partial t}=|x|^\gamma\,\nabla\cdot\(|x|^{-\beta}\,\nabla v+x\,|x|^{-\gamma}\,v\)\,.
\ee
A simple stationary solution is given by $v_\star(x)=c_{n,d}\,(2/\alpha)^{n/4}\,\exp\big(-\frac1{2\,\alpha}\,|x|^{2\,\alpha}\big)=g_\star^{\alpha,\alpha}(x)$ with the notations of Section~\ref{Sec:Intro}. With $R(0)=R_0\ge0$, we find that
\[
R(t)=\(R_0^{2\alpha}+2\,\alpha\,t\)^\frac1{2\alpha}\quad\forall\,t\ge0\,,
\]
which shows that $u_\star(t,x)=R(t)^{\gamma-d}\,f_\star\big(\log R(t),x/R(t)\big)$ is simply the Green function associated to~\eqref{heat} if we choose $R_0=0$. Another interesting choice of $R_0$ is $R_0=1$ so that the initial datum for~\eqref{FP} is the same as for~\eqref{heat}. If $v$ solves~\eqref{FP}, then the function $w=v/v_\star$ solves the \emph{weighted Ornstein-Uhlenbeck equation}
\be{OU}
\frac{\partial w}{\partial t}=\frac{|x|^\gamma}{v_\star(x)}\,\nabla\cdot\(|x|^{-\beta}\,v_\star\,\nabla w\)\,.
\ee
\begin{proposition}\label{Prop:EEP}\emph{In the symmetry range, with $d\mu_\alpha=|x|\,^{-\gamma}\,v_\star(x)\,dx$ defined as in Section~\ref{Sec:Intro}, any solution of~\eqref{OU} with nonnegative initial datum $w_0$ such that $\int_{\R^d}w_0\,d\mu_\alpha=1$ decays according to
\[
\int_{\R^d}w(t,\cdot)\,\log w(t,\cdot)\,d\mu_\alpha\le \(\int_{\R^d}w_0\,\log w_0\,d\mu_\alpha\)e^{-\,4\,\alpha\,t}\quad\forall\,t\ge0\,.
\]
}\end{proposition}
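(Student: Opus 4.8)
The plan is to recognize Proposition~\ref{Prop:EEP} as the standard entropy--entropy production statement that follows from the logarithmic Sobolev inequality in its Gaussian form, here~\eqref{Gaussian-1} (equivalently~\eqref{Gaussian-2}), in the symmetry range where $\mathscr C_{\beta,\gamma}=\mathscr C_{\beta,\gamma}^\star$. Write $\mathcal E[w]:=\int_{\R^d}w\,\log w\,d\mu_\alpha$ for the relative entropy of a nonnegative solution $w$ of~\eqref{OU} with unit mass, noting that $\mathcal E[w]\ge0$ by Jensen's inequality since $\mu_\alpha$ is a probability measure. The key is to differentiate $\mathcal E$ along the flow~\eqref{OU}: since $v_\star$ satisfies the detailed balance relation built into~\eqref{OU}, an integration by parts (legitimate on the dense set of Hermite-type functions, exactly as in Section~\ref{Sec:CarreDuChamp}) gives
\[
\frac d{dt}\,\mathcal E[w(t,\cdot)]=-\int_{\R^d}\frac{|\nabla w|^2}{w}\,|x|^{-\beta}\,v_\star\,dx=-\,\mathcal I[w(t,\cdot)]\,,
\]
the (weighted) Fisher information of $w$ relative to $\mu_\alpha$. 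One has to be slightly careful here: the natural gradient in the anisotropic picture is $\DD$, and after the change of variables~\eqref{ChangeOfDimension} the term $\int|\nabla w|^2\,|x|^{-\beta}\,v_\star\,dx$ corresponds to $\int|\DD(\cdot)|^2\,(\cdot)\,d\mu_n$; but the dissipation identity itself is just the product rule applied to $\frac{\partial}{\partial t}\int w\log w$, using $\log v_\star\in\mathrm L^1(w\,d\mu_\alpha)$ and the fact that $v_\star$ is stationary, so no symmetry input is needed at this step.

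Second, I would invoke~\eqref{Gaussian-1} in the symmetry range. Applying~\eqref{Gaussian-1} with $\sigma=\alpha$, the measure $d\mu_\sigma=d\mu_\alpha$ (since $v_\star=g_\star^{\alpha,\alpha}$), and the test function $u=\sqrt w$ — so that $|u|^2=w$ and $|\nabla u|^2=\frac14\,|\nabla w|^2/w$ — the inequality reads, after using $\mathscr C_{\beta,\gamma}=\mathscr C_{\beta,\gamma}^\star$ so that the last term vanishes,
\[
\frac14\,\mathcal I[w]=\int_{\R^d}|\nabla u|^2\,|x|^{\gamma-\beta}\,d\mu_\alpha\ge\alpha\int_{\R^d}w\,\log\!\(\frac{w}{\int w\,d\mu_\alpha}\)d\mu_\alpha=\alpha\,\mathcal E[w]\,,
\]
where $\int w\,d\mu_\alpha=\int w_0\,d\mu_\alpha=1$ by mass conservation. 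Hence $\mathcal I[w]\ge4\,\alpha\,\mathcal E[w]$, and combining with the dissipation identity gives the Gronwall inequality $\frac d{dt}\mathcal E[w(t,\cdot)]\le-\,4\,\alpha\,\mathcal E[w(t,\cdot)]$, which integrates to the claimed bound $\mathcal E[w(t,\cdot)]\le\mathcal E[w_0]\,e^{-4\,\alpha\,t}$.

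The only genuine obstacle is the justification of the entropy dissipation identity and of the application of~\eqref{Gaussian-1} to $u=\sqrt w$: the integrations by parts require decay/regularity of $w$, and $\sqrt w$ need not a priori lie in the relevant weighted $\mathrm H^1$ space. As throughout Section~\ref{Sec:WLS}, this is handled by working first with initial data $w_0$ that are finite linear combinations of Hermite functions (which are dense, smooth, and whose evolution under the linear flow~\eqref{OU} stays in such a finite-dimensional space, with Gaussian-type decay making all boundary terms vanish), and then passing to the limit by density and lower semicontinuity of the entropy; the positivity of $w(t,\cdot)$ for $t>0$ follows from the parabolic maximum principle, so $\sqrt w$ is admissible along the flow. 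I would remark that this is precisely the linear, parabolic incarnation of the \emph{carr\'e du champ} computation carried out nonlinearly in Section~\ref{Sec:CarreDuChamp}, and that the rate $4\,\alpha$ is sharp, being saturated in the linearized regime $w\to1$ by the first nonradial eigenmode only when $\alpha=\alpha_{\rm FS}$, consistently with Lemma~\ref{Lem:ExpDecay}.
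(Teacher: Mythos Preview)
Your proof is correct and follows exactly the approach of the paper: differentiate the entropy along~\eqref{OU} and apply~\eqref{Gaussian-1} with $\sigma=\alpha$ to $u=\sqrt w$, then conclude by Gronwall. The paper's proof is a one-line sketch of precisely this argument; your additional remarks on the justification via Hermite functions and on the choice $\sigma=\alpha$ (so that $d\mu_\sigma=d\mu_\alpha$) merely spell out what the paper leaves implicit.
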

\begin{proof} We compute $\frac d{dt}\int_{\R^d}w(t,\cdot)\,\log w(t,\cdot)\,d\mu_\alpha$ and apply~\eqref{Gaussian-1} to $u=\sqrt w$. \end{proof}

By the Csisz\'ar-Kullback-Pinsker inequality
\[
\int_{\R^d}|w-1|\,d\mu_\alpha\le\frac12\,\sqrt{\textstyle{\int_{\R^d}w\,\log w\,d\mu_\alpha}}
\]
for any nonnegative function $w$ such that $\int_{\R^d}w\,d\mu_\alpha=1$. By undoing the above changes of variables with $R_0=1$, we can write the following \emph{intermediate asymptotics} result.
\begin{corollary}\label{Cor:IA}\emph{In the symmetry range, any solution of~\eqref{heat} with nonnegative initial datum $u_0$ such that $\nrm{u_0}{1,\gamma}=1$ obeys to
\[
\nrm{u(t,\cdot)-u_\star(t,\cdot)}{1,\gamma}\le\frac12\,\sqrt{\textstyle{\ird{u_0\,\log(u_0/v_\star)\,|x|^{-\gamma}}}}\,\(1+2\,\alpha\,t\)^{-1}\quad\forall\,t\ge0\,.
\]
}\end{corollary}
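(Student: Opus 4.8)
The plan is to derive Corollary~\ref{Cor:IA} directly from Proposition~\ref{Prop:EEP} and the Csisz\'ar--Kullback--Pinsker inequality by carefully undoing the self-similar change of variables with $R_0=1$. First I would record that, for the solution $w$ of~\eqref{OU} with nonnegative initial datum $w_0$ normalized by $\int_{\R^d}w_0\,d\mu_\alpha=1$, the combination of Proposition~\ref{Prop:EEP} and the stated Csisz\'ar--Kullback--Pinsker inequality yields
\[
\int_{\R^d}|w(t,\cdot)-1|\,d\mu_\alpha\le\frac12\,\sqrt{\textstyle\int_{\R^d}w_0\,\log w_0\,d\mu_\alpha}\;e^{-\,2\,\alpha\,t}\quad\forall\,t\ge0\,.
\]
Next I would translate this back through the chain of substitutions. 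Recall $w=v/v_\star$ with $v$ solving~\eqref{FP}, and $v$ is obtained from $u$ solving~\eqref{heat} via $u(t,x)=R(t)^{\gamma-d}\,v\big(\log R(t),x/R(t)\big)$; with $R_0=1$ we have $R(t)=(1+2\,\alpha\,t)^{1/(2\alpha)}$ and $\log R(t)\to+\infty$ as $t\to+\infty$, so $e^{-2\alpha\,t}$ should be re-expressed in the new time variable. Since the natural time in~\eqref{OU} is $s=\log R(t)$, one has $e^{-2\alpha\,s}=R(t)^{-2\alpha}=(1+2\,\alpha\,t)^{-1}$, which produces the algebraic decay rate $(1+2\,\alpha\,t)^{-1}$ appearing in the statement.

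The core computation is to check that the weighted $\mathrm L^1$ norm is, up to the stationary profile, invariant under the self-similar change of variables, so that
\[
\int_{\R^d}|w(s,\cdot)-1|\,d\mu_\alpha=\nrm{u(t,\cdot)-u_\star(t,\cdot)}{1,\gamma}
\]
with $s=\log R(t)$. Here I would use $d\mu_\alpha=|x|^{-\gamma}\,v_\star(x)\,dx$, $w=v/v_\star$, hence $|w-1|\,d\mu_\alpha=|v-v_\star|\,|x|^{-\gamma}\,dx$; then substitute $x\mapsto x/R(t)$, $v(s,\cdot)=R(t)^{d-\gamma}\,u(t,R(t)\,\cdot)$ and $v_\star=R(t)^{d-\gamma}\,u_\star(t,R(t)\,\cdot)$ (using that $u_\star(t,x)=R(t)^{\gamma-d}f_\star(\log R(t),x/R(t))$ is the self-similar solution with $R_0=1$ — note this requires first observing $v_\star=g_\star^{\alpha,\alpha}$ is exactly the stationary profile, as stated before Proposition~\ref{Prop:EEP}). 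The Jacobian factor $R(t)^d$ from $dx$ and the weight factor $R(t)^{-\gamma}$ from $|x|^{-\gamma}$ combine with the $R(t)^{d-\gamma}$ prefactors in $v$ and $v_\star$ to leave the integral unchanged; this is the key cancellation. Finally, identifying $\int_{\R^d}w_0\,\log w_0\,d\mu_\alpha=\ird{u_0\,\log(u_0/v_\star)\,|x|^{-\gamma}}$ under $w_0=u_0/v_\star$ (using $\nrm{u_0}{1,\gamma}=1\iff\int_{\R^d}w_0\,d\mu_\alpha=1$) and collecting the factors completes the proof.

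I expect the main obstacle to be purely bookkeeping: tracking the exponents of $R(t)$ through the successive changes of variables and making sure the time variable is converted consistently (the flow~\eqref{OU} runs in the variable $s=\log R(t)$, not $t$), so that $e^{-4\alpha\,t}$ in Proposition~\ref{Prop:EEP} — which becomes $e^{-2\alpha\,s}$ after taking the square root in Csisz\'ar--Kullback--Pinsker — turns into the stated $(1+2\alpha\,t)^{-1}$ rather than an exponential. There is no analytic difficulty beyond that: all the hard work (the entropy decay, which rests on the symmetry result of Theorem~\ref{Thm:WLSI1}) is already contained in Proposition~\ref{Prop:EEP}, and the Csisz\'ar--Kullback--Pinsker inequality is quoted. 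One minor point to be careful about is regularity and positivity of $u$ and $w$ along the flow, needed to make the change of variables and the entropy computations legitimate; in the symmetry range this is standard for the weighted heat flow~\eqref{heat} with nonnegative $\mathrm L^1_\gamma$ data, and can be invoked without detailed proof.
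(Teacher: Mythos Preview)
Your proposal is correct and follows exactly the approach indicated in the paper, which simply says to combine the Csisz\'ar--Kullback--Pinsker inequality with Proposition~\ref{Prop:EEP} and undo the self-similar change of variables with $R_0=1$. You have merely spelled out the bookkeeping (the identification $s=\log R(t)$ so that $e^{-2\alpha s}=(1+2\alpha t)^{-1}$, and the invariance of the weighted $\mathrm L^1$ norm under the rescaling) that the paper leaves implicit.
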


The above results are consistent with the flow
\[
\frac{\partial u}{\partial t}=|x|^\gamma\,\nabla\cdot\(|x|^{-\beta}\,\nabla u^m\)
\]
with $m<1$, which is adapted to~\eqref{CKN} inequalities with $p=1/(2\,m-1)$. Notice however that parabolic computations as in Section~\ref{Sec:CarreDuChamp} are, so far, only formal if $m<1$: see~\cite{MR3579563,DEL-2015,Dolbeault2017,DEL-JEPE} for details.

\subsection{Hyper-contractivity estimates}

Let us measure the gain of regularity by the weighted heat flow~\eqref{heat}. the following result generalizes~\cite{MR0343816,Federbush,Gross75}.
\begin{proposition}\label{Prop:Nelson} \emph{Let $d\ge1$, $r>q>1$ and assume that $\beta$ and $\gamma$ satisfy~\eqref{Range}. If $u$ is a solution of~\eqref{heat} with initial datum $u_0\in\mathrm L^q_\gamma(\R^d)$, then
\be{Ineq:Nelson}
\nrm{u(t,\cdot)}{r,\gamma}\le\mathscr H^{\,q,r}_{\beta,\gamma}\,\nrm{u_0}{q,\gamma}\,t^{-\frac n2\,\frac{r-q}{q\,r}}\quad\forall\,t\ge0
\ee
where $\mathscr H^{\,q,r}_{\beta,\gamma}:=t_\star^{\frac n2\,\frac{r-q}{q\,r}}$ and $t_\star:=\tfrac n8\,e^{\frac2n\,\mathscr{C}_{\beta,\gamma}-1}\log\big(\tfrac{r-1}{q-1}\big)$.
}\end{proposition}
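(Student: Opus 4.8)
The plan is to deduce the hyper-contractivity estimate \eqref{Ineq:Nelson} from the family of non scale-invariant inequalities \eqref{WLSI-unscaled} by the classical Gross argument, adapted to the weighted setting. The point is that the weighted heat flow \eqref{heat} is exactly the flow associated with the Dirichlet form $f\mapsto\nrm{\nabla f}{2,\beta}^2$ on $\mathrm L^2_\gamma(\R^d)$, so all the integration-by-parts identities used by Gross go through with the measures $|x|^{-\gamma}\,dx$ and $|x|^{-\beta}\,dx$ in place of Lebesgue measure. Since Hermite-type functions are dense and stable under the flow (as already used in Section~\ref{Sec:CarreDuChamp}), there is no regularity obstruction to the computations below, and it suffices to prove the bound for such nice initial data and then argue by density.

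First I would fix $u_0\ge0$ (the general case follows by $|u(t,\cdot)|\le e^{t\,\mathsf L}|u_0|$ where $\mathsf L\cdot=|x|^\gamma\nabla\cdot(|x|^{-\beta}\nabla\cdot)$, monotonicity of the flow) and introduce a time-dependent exponent $q(t)$ with $q(0)=q$, to be chosen increasing, together with the quantity $\Phi(t):=\nrm{u(t,\cdot)}{q(t),\gamma}$. A direct computation using $\partial_t u=\mathsf L u$ gives
\[
\frac{d}{dt}\log\Phi(t)=\frac{q'(t)}{q(t)^2\,\Phi(t)^{q(t)}}\ird{|u|^{q(t)}\,\log\!\(\frac{|u|^{q(t)}}{\Phi(t)^{q(t)}}\)|x|^{-\gamma}}-\frac{q(t)-1}{q(t)\,\Phi(t)^{q(t)}}\cdot\frac4{q(t)^2}\nrm{\nabla\!\(|u|^{q(t)/2}\)}{2,\beta}^2\,,
\]
where the second term comes from testing $\mathsf L u$ against $|u|^{q-1}\,\mathrm{sign}\,u$ and the weighted integration by parts $\ird{|x|^{-\gamma}\,|u|^{q-1}\,\mathsf L u}=-\,(q-1)\ird{|x|^{-\beta}\,|u|^{q-2}\,|\nabla u|^2}$, then rewriting the right-hand side in terms of $v:=|u|^{q/2}$. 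Now apply \eqref{WLSI-unscaled} to $f=v=|u|^{q(t)/2}$ with parameter $\sigma=\sigma(t):=4\,(q(t)-1)\,q'(t)^{-1}\,q(t)^{-1}$ chosen so that the entropy terms match: the Fisher-information term in \eqref{WLSI-unscaled} is precisely $\nrm{\nabla v}{2,\beta}^2$, and after multiplying through by $q'(t)/(q(t)^2\,\Phi^{q(t)})$ the two entropy contributions cancel exactly. This leaves the inequality
\[
\frac{d}{dt}\log\Phi(t)\le\frac{q'(t)}{q(t)^2}\,\sigma(t)\(\frac n2\,\log\!\(\frac{2\,e}{n\,\sigma(t)}\)-\mathscr C_{\beta,\gamma}\)\,,
\]
after dividing by $\Phi^{q(t)}$ and using $\ird{|x|^{-\gamma}|v|^2}=\Phi(t)^{q(t)}$.

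Next I would choose $q(t)$ so that $\sigma(t)$ is constant in $t$, i.e. the classical Gross ODE $q'(t)=c\,q(t)\,(q(t)-1)$, which integrates to $\dfrac{q(t)-1}{q(t)}=\dfrac{q-1}{q}\,e^{ct}$, equivalently $q(t)=\dfrac{q-1}{q}\,e^{ct}\big/\!\(\dfrac{q-1}{q}\,e^{ct}-1+\dfrac1q\cdot\text{(adjust)}\)$ — concretely, demanding $q(t_\star)=r$ at the target time $t_\star$ fixes the constant $c$ via $e^{c\,t_\star}=\dfrac{(r-1)/r}{(q-1)/q}$ up to the harmless reparametrization, and then $\sigma=4/c=4\,t_\star/\log\!\big(\tfrac{r-1}{q-1}\big)$ after simplification, matching the stated value $t_\star=\tfrac n8\,e^{\frac2n\mathscr C_{\beta,\gamma}-1}\log\!\big(\tfrac{r-1}{q-1}\big)$ once one optimizes or rather once the logarithmic term $\log(2e/(n\sigma))$ is evaluated. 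Integrating the differential inequality from $0$ to $t_\star$ gives $\log\Phi(t_\star)\le\log\nrm{u_0}{q,\gamma}+(\text{explicit constant})$, which is \eqref{Ineq:Nelson} at time $t_\star$ with no decay factor; the decay rate $t^{-\frac n2\frac{r-q}{qr}}$ and the stated constant $\mathscr H^{q,r}_{\beta,\gamma}=t_\star^{\frac n2\frac{r-q}{qr}}$ then follow from the scaling $u(t,x)\mapsto\lambda^{d-\gamma}u(\lambda^{2\alpha}t,\lambda x)$ which is a symmetry of \eqref{heat}, applied to rescale an arbitrary time $t$ to $t_\star$. The main obstacle I expect is purely bookkeeping: correctly tracking the $\sigma$-dependent constant through the change of exponent and the scaling so that the logarithmic term in \eqref{WLSI-unscaled} produces exactly $t_\star$ as written — the structural part (Gross's argument with weights) is routine once one observes that \eqref{WLSI-unscaled} is the right inequality to feed in and that the weighted flow respects all the needed integrations by parts. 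A secondary technical point is justifying differentiation under the integral sign and the density reduction, but this is already handled by the Hermite-function framework invoked earlier in the paper.
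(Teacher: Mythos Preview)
Your plan is the paper's: Gross's differential inequality for $\Phi(t)=\nrm{u(t,\cdot)}{p(t),\gamma}$, fed with~\eqref{WLSI-unscaled} applied to $|u|^{p(t)/2}$, then choosing $p(t)$ so that $\sigma$ is the constant critical value that makes the right-hand side vanish. The structure is sound, but several of your formulas are wrong and would not close as written. The Fisher coefficient in the identity for $\tfrac{d}{dt}\log\Phi$ is $\tfrac{4(p-1)}{p^2}$, not $\tfrac{q-1}{q}\cdot\tfrac4{q^2}=\tfrac{4(q-1)}{q^3}$; hence the correct matching is $\sigma=\tfrac{p'}{4(p-1)}$, not $\tfrac{4(q-1)}{q'q}$. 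Keeping $\sigma$ constant then gives the linear ODE $p'=4\sigma(p-1)$ with solution $p(t)=1+(q-1)\,e^{4\sigma t}$, not the logistic equation $q'=c\,q(q-1)$ you wrote. And the resulting differential inequality is
\[
\frac{d}{dt}\log\Phi\le\frac{p'}{p^2}\(\mathscr C_{\beta,\gamma}-\frac n2\log\frac{2e}{n\sigma}\)\,,
\]
with no extra factor of $\sigma$ and the opposite overall sign to what you displayed. Setting $\sigma=\sigma_{\beta,\gamma}:=\tfrac2n\,e^{1-\frac2n\mathscr C_{\beta,\gamma}}$ makes the bracket vanish, and $p(t_\star)=r$ gives exactly the stated $t_\star$.

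For the passage to general $t$, your scaling argument is a legitimate alternative to the paper's. The paper instead re-runs the same Gross computation with the (now non-critical) constant $\sigma$ determined by $t=\tfrac1{4\sigma}\log\!\tfrac{r-1}{q-1}$; since $\int_0^t\tfrac{p'}{p^2}\,ds=\tfrac1q-\tfrac1r$ and $\log\!\tfrac\sigma{\sigma_{\beta,\gamma}}=\log\!\tfrac{t_\star}{t}$, integration yields $\nrm{u(t,\cdot)}{r,\gamma}\le\nrm{u_0}{q,\gamma}\,(t/t_\star)^{-\frac n2\frac{r-q}{qr}}$ directly. Your route via $u\mapsto\lambda^{d-\gamma}u(\lambda^{2\alpha}\cdot,\lambda\,\cdot)$ gives the same conclusion once you track $\nrm{u_\lambda(0,\cdot)}{q,\gamma}=\lambda^{(d-\gamma)(1-1/q)}\nrm{u_0}{q,\gamma}$ and use $(d-\gamma)/(2\alpha)=n/2$; this is arguably cleaner conceptually, while the paper's variant has the advantage of staying entirely inside the entropy computation and making the optimality of $\mathscr H^{q,r}_{\beta,\gamma}$ transparent.
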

\begin{proof} For some exponent $p$ depending smoothly on $s$ with $p'(s)>0$, let us consider the function
\[
h(s):=\nrm{u(s,\cdot)}{p(s),\gamma}\,.
\]
By a standard computation which goes back to~\cite{Gross75} we have
\be{EDOp}
\frac{h'}h=\frac{p'}{p^2}\ird{\frac{|u|^p}{h^p}\,\log\(\frac{|u|^p}{h^p}\)|x|^{-\gamma}}-\frac1{h^p}\,\frac{4\,(p-1)}{p^2}\ird{\left|\nabla|u|^{p/2}\right|^2\,|x|^{-\beta}}\le\frac{p'}{p^2}\(\mathscr C_{\beta,\gamma}-\frac n2\,\log\(\frac{2\,e}{n\,\sigma}\)\)
\ee
where the inequality holds as a consequence of~\eqref{WLSI-unscaled} applied to $|u|^{p/2}$ with
\be{EDOp2}
p'=4\,\sigma\,(p-1)\,.
\ee
With the choice $\sigma=\sigma_{\beta,\gamma}$ where $\sigma_{\beta,\gamma}:=\frac2n\,e^{1-\frac2n\,\mathscr{C}_{\beta,\gamma}}$, so that $h'\le0$, and $p(0)=q$,~\eqref{EDOp} is solved by
\be{SolnEDO}
p(s)=1+(q-1)\,e^{4\,\sigma_{\beta,\gamma}\,s}\quad\forall\,s\ge0\,.
\ee
The condition $p(t_\star)=r$ determines
\[
t_\star=\frac1{4\,\sigma_{\beta,\gamma}}\,\log\(\frac{r-1}{q-1}\)
\]
such that
\[
\nrm{u(t_\star,\cdot)}{r,\gamma}=\nrm{u_0}{q,\gamma}\,.
\]
If $t\neq t_\star$, we use~\eqref{EDOp} again for $\sigma>0$ such that $r=1+(q-1)\,e^{4\,\sigma\,t}$, \emph{i.e.},
\be{Choice:t}
t=\frac1{4\,\sigma}\,\log\(\frac{r-1}{q-1}\)\,,
\ee
and obtain
\[
\frac{h'}h\le\frac{p'}{p^2}\,\frac n2\,\log\(\frac{\sigma}{\sigma_{\beta,\gamma}}\)\,,
\]
that is, after integration with respect to $s\in[0,t]$,
\[
\nrm{u(t,\cdot)}{r,\gamma}=\nrm{u_0}{q,\gamma}\,\big(t_\star^{-1}\,t\big)^{-\frac n2\,\frac{r-q}{r\,q}}\quad\forall\,t\ge0\,.
\]

\end{proof}

Notice that the choice of $t_\star$ in~\eqref{Ineq:Nelson} is optimal because~\eqref{Ineq:Nelson} with $\sigma=\sigma_{\beta,\gamma}$ means that $h(s)\le h(0)$ for any $s>0$, hence $h'(0)\le0$ so that the optimal value of $\sigma_{\beta,\gamma}$ in~\eqref{SolnEDO} determines the optimal constant in~\eqref{WLSI-unscaled}. Slightly more subtle is the fact that $\mathscr H^{\,q,r}_{\beta,\gamma}$ is also the optimal constant. Using~\eqref{EDOp} with the condition $p(t)=r$, we can write that
\[
\nrm{u(t,\cdot)}{r,\gamma}=h(t)\le h(0)\,\exp\(\int_0^t\frac{p'(s)}{p^2(s)}\(\mathscr C_{\beta,\gamma}-\frac n2\,\log\(\frac{2\,e}{n\,\sigma}\)\)ds\)
\]
where $h(0)=\nrm{u_0}{q,\gamma}$ and $\sigma$ can be taken $s$-dependent. With the change of variables $s\mapsto z$, $z=Z(s)=1/p(s)$, we can compute
\[
\int_0^t\frac{p'(s)}{p^2(s)}\(\mathscr C_{\beta,\gamma}-\frac n2\,\log\(\frac{2\,e}{n\,\sigma}\)\)ds=\(\mathscr C_{\beta,\gamma}-\frac n2\,\log\(\frac{2\,e}n\)\)\(\frac1q-\frac1r\)-\frac n2\int_{1/r}^{1/q}\log\sigma(z)\,dz
\]
where, up to a slight abuse of notations, we consider $\sigma$ as a function of $z$ and deduce from~\eqref{EDOp2} that
\[
\sigma(z)=4\,\frac{p^2}{p'}\,\frac{p-1}{p^2}=4\,\frac{z\,(z-1)}{(Z'\circ Z^{-1})(z)}\,,
\]
although we do not make use of this identity. Indeed, an infinitesimal variation of $\int_{1/r}^{1/q}\log\sigma(z)\,dz$ directly shows that the optimal case is achieved by a constant function $z\mapsto\sigma(z)$ corresponding to the choice~\eqref{Choice:t}. Hence $\mathscr H^{\,q,r}_{\beta,\gamma}$ as defined in Proposition~\ref{Prop:Nelson} is optimal.

\bigskip\begin{center}\rule{2cm}{0.5pt}\end{center}\bigskip\appendix
\renewcommand{\thesection}{{\bf Appendix~\Alph{section}.}}
\section{A purely algebraic computation}\label{Appendix}
\renewcommand{\thesection}{\Alph{section}}

For completeness, let us give a proof of Lemma~\ref{Lem:4.2}, (i). We recall that
\[
\mathsf K[\D\mathsf p]=\frac12\,\mathsf L_\alpha\,|\DD\p|^2-\,\DD\p\cdot\DD\mathsf L_\alpha\p-\frac1n\,(\mathsf L_\alpha\p)^2
\]
and
\[
\mathsf k[\p]=\frac12\,\Delta_\omega\,|\nabla_\omega\p|^2-\nabla_\omega\p\cdot\nabla_\omega\Delta_\omega\,\p-\frac1{n-1}\,(\Delta_\omega\,\p)^2-(n-2)\,\alpha^2\,|\nabla_\omega\p|^2\,.
\]
\par\smallskip\begin{lemma}\label{Lem:Derivmatrixform1}{\sl Let $d\in\N$, $n\in\R$ such that $n>d\ge2$, and consider a function $\p\in C^3(\R^d\setminus\{0\})$. Then,
\[
\mathsf K[\D\mathsf p]=\alpha^4\(1-\frac1n\)\left|\p''-\frac{\p'}\rs-\frac{\Delta_\omega\,\p}{\alpha^2\,(n-1)\,\rs^2}\right|^2+\frac{2\,\alpha^2}{\rs^2}\left|\nabla_\omega\p'-\frac{\nabla_\omega\p}\rs \right|^2+\frac{\mathsf k[\p]}{\rs^4}\,.
\]}\end{lemma}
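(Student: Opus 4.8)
The plan is to establish the claimed identity by a direct computation in spherical coordinates $(r,\omega)\in\R^+\times\S^{d-1}$, exploiting the factored form $\D=\begin{pmatrix}\alpha\,\partial_r\\ r^{-1}\nabla_\omega\end{pmatrix}$ and $\mathsf L_\alpha=\alpha^2(\partial_r^2+\tfrac{n-1}{r}\partial_r)+r^{-2}\Delta_\omega$. First I would expand each of the three terms $\tfrac12\,\mathsf L_\alpha|\D\p|^2$, $\D\p\cdot\D(\mathsf L_\alpha\p)$ and $\tfrac1n(\mathsf L_\alpha\p)^2$ in these coordinates, writing $|\D\p|^2=\alpha^2|\p'|^2+r^{-2}|\nabla_\omega\p|^2$ (with $\p'=\partial_r\p$) and collecting contributions according to the power of $r$ that appears. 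The $r^{-4}$ sector will involve only angular derivatives and will, after using the spherical Bochner identity implicitly through the algebra, reproduce exactly $\mathsf k[\p]/r^4$; the purely radial sector produces the first square, and the mixed radial-angular sector produces the middle term $\tfrac{2\alpha^2}{r^2}|\nabla_\omega\p'-r^{-1}\nabla_\omega\p|^2$.

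The key mechanism behind the identity is the classical Bakry–Émery/$\mathrm{CD}(\rho,N)$ pointwise computation: $\tfrac12\mathsf L(|\nabla\p|^2)-\nabla\p\cdot\nabla(\mathsf L\p)$ equals the Hessian-squared term plus a curvature term, and subtracting $\tfrac1n(\mathsf L\p)^2$ converts $\|\mathrm{Hess}\,\p\|^2-\tfrac1n(\mathsf L\p)^2$ into the squared norm of the trace-free Hessian, which here because the "dimension" $n$ differs from $d$ splits into the displayed radial square (weighted by $1-\tfrac1n$) plus the angular remainder. Concretely, I would compute $\mathsf L_\alpha\p=\alpha^2(\p''+\tfrac{n-1}{r}\p')+r^{-2}\Delta_\omega\p$ and $\D(\mathsf L_\alpha\p)$, then assemble $\mathsf K[\D\p]$ term by term, carefully tracking the commutators $\partial_r\nabla_\omega=\nabla_\omega\partial_r$ and the identities $\partial_r(r^{-2})=-2r^{-3}$, $\Delta_\omega$ acting on products, and the fact that $\partial_r\Delta_\omega=\Delta_\omega\partial_r$. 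After grouping, one checks that the coefficient of $|\p''|^2$ is $\alpha^4(1-\tfrac1n)$, that the cross term $\p''\cdot\tfrac{\p'}{r}$ and $\p''\cdot\tfrac{\Delta_\omega\p}{r^2}$ complete the first square, and that all remaining $r^{-2}$ and $r^{-4}$ terms reorganize into the middle square and $\mathsf k[\p]/r^4$ respectively.

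I expect the main obstacle to be purely bookkeeping: there are many terms, and one must be scrupulous about the weights $r^{n-1}$ versus $r^{d-1}$ never actually entering (this is a pointwise identity, not an integral one, so no integration by parts is used), and about the distinction between $n$ and $d$, which is precisely what makes the trace-free decomposition of the Hessian on $\R^d$ differ from the one natural for the operator $\mathsf L_\alpha$. The cleanest route is to match coefficients in a basis adapted to the decomposition — the radial second derivative $\p''$, the quantity $\p'/r$, the angular Laplacian $\Delta_\omega\p/r^2$, the mixed vector $\nabla_\omega\p'$ and $\nabla_\omega\p/r$, and the genuinely angular invariants $\tfrac12\Delta_\omega|\nabla_\omega\p|^2-\nabla_\omega\p\cdot\nabla_\omega(\Delta_\omega\p)$ and $(\Delta_\omega\p)^2$, $|\nabla_\omega\p|^2$ — and verify that the expansion of $\mathsf K[\D\p]$ and the claimed right-hand side agree coefficient by coefficient. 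Since this is ``a purely algebraic computation,'' no functional-analytic subtlety arises; the $C^3(\R^d\setminus\{0\})$ hypothesis merely guarantees that all the third-order derivatives appearing in the expansion are defined and that the commutators of $\partial_r$ with $\nabla_\omega$ and $\Delta_\omega$ are legitimate.
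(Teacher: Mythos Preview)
Your proposal is correct and follows essentially the same route as the paper: a direct expansion of $\mathsf K[\D\p]$ in spherical coordinates, writing $|\D\p|^2=\alpha^2|\p'|^2+r^{-2}|\nabla_\omega\p|^2$ and $\mathsf L_\alpha\p=\alpha^2(\p''+\tfrac{n-1}{r}\p')+r^{-2}\Delta_\omega\p$, then collecting terms into the displayed squares. One small correction: no spherical Bochner identity is used or needed in this pointwise lemma --- the $r^{-4}$ angular sector matches the \emph{definition} of $\mathsf k[\p]$ directly (note also that the first square contributes a $\tfrac{1}{n(n-1)}(\Delta_\omega\p)^2/r^4$ term, which is what converts the $-\tfrac1n$ coefficient on the left into the $-\tfrac1{n-1}$ appearing in $\mathsf k$); the Bochner formula enters only in the integral estimate of Lemma~\ref{Lem:4.2}(ii).
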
\par\smallskip
This result is a purely algebraic computation which involves no integration by parts and in which, for~\eqref{CKN}, neither $p$ nor $m$ plays any role, so that it perfectly makes sense to consider the limit case $m=1$ and $p=1$. We recall that the original result of~\cite[Lemma~5.1]{DEL-2015} was given in the framework of $m<1$.
\begin{proof} By definition of $\mathsf K[\D\mathsf p]$, we have
\begin{eqnarray*}
\mathsf K[\D\mathsf p]&=&\frac{\alpha^2}2\left[\alpha^2\,\p'^2+\frac{|\nabla_\omega\p|^2}{\rs^2}\)''+\frac{\alpha^2}2\frac{(n-1)}\rs
\left[\alpha^2\,\p'^2+\frac{|\nabla_\omega\p|^2}{\rs^2}\)'+\frac1{2\,\rs^2}\,\Delta_\omega\left[\alpha^2\,\p'^2+\frac{|\nabla_\omega\p|^2}{\rs^2}\)\\
&&-\,\alpha^2\,\p'\(\alpha^2\,\p''+\alpha^2\,\frac{(n-1)}\rs\,\p'+\frac{\Delta_\omega\,\p}{\rs^2}\)'-\frac1{\rs^2}
\nabla_\omega\p\cdot\nabla_\omega\(\alpha^2\,\p''+\alpha^2\,\frac{(n-1)}\rs\,\p'+\frac{\Delta_\omega\,\p}{\rs^2}\)\\
&&-\,\frac 1n\(\alpha^2\,\p''+\alpha^2\,\frac{(n-1)}\rs\,\p'+\frac{\Delta_\omega\,\p}{\rs^2}\)^2\,,
\end{eqnarray*}
which can be expanded as
\begin{eqnarray*}
\mathsf K[\D\mathsf p]\kern-6pt&=&\kern-6pt\frac{\alpha^2}2\left[ 2\,\alpha^2\,\p''^2+2\,\alpha^2\,\p'\,\p'''+2\,\frac{|\nabla_\omega\p'|^2+\nabla_\omega\p\cdot\nabla_\omega\p''}{\rs^2}
-8\,\frac{\nabla_\omega\p\cdot\nabla_\omega\p'}{\rs^3}+6\,\frac{|\nabla_\omega\p|^2}{\rs^4}\)\\
&&\kern-6pt+\,\alpha^2\,\frac{(n-1)}\rs\left[\alpha^2\,\p'\,\p''+\frac{\nabla_\omega\p\cdot\nabla_\omega\p'}{\rs^2}-\frac{|\nabla_\omega\p|^2}{\rs^3}\)+\frac1{\rs^2}\left[\alpha^2\,\p'\Delta_\omega\,\p'+\alpha^2\,|\nabla_\omega\p'|^2+\frac{\Delta_\omega\,|\nabla_\omega\p|^2}{2\,\rs^2}\)\\
&\kern-6pt&-\,\alpha^2\,\p'\(\alpha^2\,\p'''+\alpha^2\,\frac{(n-1)}\rs\,\p''-\,\alpha^2\,\frac{(n-1)}{\rs^2}\p'-2\,\frac{\Delta_\omega\,\p}{\rs^3}+\frac{\Delta_\omega\,\p'}{\rs^2}\)\\
&&\kern-6pt\hspace*{2cm}-\frac1{\rs^2}
\(\alpha^2\,\nabla_\omega\p\cdot\nabla_\omega\p''+\alpha^2\,\frac{(n-1)}\rs\nabla_\omega\p\cdot\nabla_\omega\p'+\frac{\nabla_\omega\p\cdot\nabla_\omega\Delta_\omega\,\p}{\rs^2}\)\\
&&\kern-6pt-\,\frac 1n\left[\alpha^4\,\p''^2+\alpha^4\,\frac{(n-1)^2}{\rs^2}\,\p'^2+\frac{(\Delta_\omega\,\p)^2}{\rs^4}+2\,\alpha^4\,\frac{(n-1)}\rs\,\p'\,\p''+2\,\alpha^2\,\frac{\p''\Delta_\omega\,\p}{\rs^2}+2\,\alpha^2\,\frac{(n-1)}{\rs^3}\p'\Delta_\omega\,\p\).
\end{eqnarray*}
Collecting terms proves the result.\end{proof}

\bigskip\noindent{\bf Acknowledgments.} This work has been supported by the Project EFI (ANR-17-CE40-0030) of the French National Research Agency (ANR). AZ was funded by ANID Chile under the grant FONDECYT de Iniciaci\'on en Investigaci\'on $N^{\circ}$ 11201259, and supported by Instituto de Ciencias de la Ingenier\'ia (ICI) of Universidad de O'Higgins (UOH) under the fund Fondo de Instalaci\'on y Movilidad.
\small

\end{document}